\newlist{inparaenum}{enumerate*}{1}
\setlist[inparaenum]{label=(\arabic*)}
\def\journal@name{}%
\def\journal@url{}%
\theoremstyle{plain}
\newtheorem{theorem}{Theorem}[section]
\newtheorem{corollary}[theorem]{Corollary}
\newtheorem{lemma}[theorem]{Lemma}
\newtheorem{proposition}[theorem]{Proposition}
\newtheorem{problem}[theorem]{Problem}
\theoremstyle{definition}
\newtheorem{remark}[theorem]{Remark}
\newtheorem{example}[theorem]{Example}
\newtheorem{definition}[theorem]{Definition}
\newtheorem*{convention*}{Convention}
\newcommand{\rr}{\mathbb{R}}
\newcommand{\cm}{\mathcal{M}}
\newcommand{\cn}{\mathcal{N}}
\newcommand{\ct}{\mathcal{T}}
\newcommand{\cg}{\mathcal{G}}
\newcommand{\PD}{\mathrm{PD}}
\DeclareMathOperator{\pa}{pa}
\DeclareMathOperator{\neighbors}{ne}
\DeclareMathOperator{\ch}{ch}
\let\vec\undefined
\DeclareMathOperator{\vec}{vec}
\DeclareMathOperator{\vech}{vech}
\newcommand{\indep}{\mathrel{\text{$\perp\mkern-10mu\perp$}}}
\newcommand{\nrnodes}{n}
\newcommand{\pd}{\mathrm{PD}}
\newcommand{\ot}{\leftarrow}
\newcommand{\Id}{I}
\newcommand{\defas}{\coloneqq}
\definecolor{benpurple}{RGB}{180, 0, 240}
\begin{document}

\begin{frontmatter}
\title{Structural Identifiability of\\%
Graphical Continuous Lyapunov Models}
\runtitle{Structural Identifiability of GCLMs}

\begin{aug}
\author[A]{\fnms{Carlos}~\snm{Am\'endola}\ead[label=e1]{amendola@math.tu-berlin.de}\orcid{0000-0003-1945-8874}},
\author[B]{\fnms{Tobias}~\snm{Boege}\ead[label=e2]{post@taboege.de}\orcid{0000-0001-7284-1827}}, \\
\author[C]{\fnms{Benjamin}~\snm{Hollering}\ead[label=e3]{benjamin.hollering@mis.mpg.de}\orcid{0000-0003-3803-2879}}, and
\author[D]{\fnms{Pratik}~\snm{Misra}\ead[label=e4]{pmisra@binghamton.edu}\orcid{0009-0006-0011-6232}}
\address[A]{Technical University of Berlin \printead[presep={,\ }]{e1}}
\address[B]{UiT The Arctic University of Norway\printead[presep={,\ }]{e2}}
\address[C]{Max Planck Institute for Mathematics in the Sciences\printead[presep={,\ }]{e3}}
\address[D]{Binghamton University, State University of New York \printead[presep={,\ }]{e4}}
\end{aug}

\begin{abstract}
We prove two characterizations of model equivalence of acyclic graphical
continuous Lyapunov models (GCLMs) with uncorrelated noise. The first
result shows that two graphs are model equivalent if and only if they
have the same skeleton and equivalent induced 4-node subgraphs.
We also give a transformational characterization via structured edge
reversals. The two theorems are Lyapunov analogues of celebrated results for Bayesian networks by Verma and Pearl, and Chickering, respectively.
Our results have broad consequences for the theory of causal inference
of GCLMs. First, we find that model equivalence classes of acyclic GCLMs
refine the corresponding classes of Bayesian networks. Furthermore, we
obtain polynomial-time algorithms to test model equivalence and structural
identifiability of given directed acyclic~graphs.
\end{abstract}

\begin{keyword}[class=MSC]
\kwd[Primary ]{62H22} %
\kwd{60J60} %
\kwd[; secondary ]{15A24} %
\kwd{62R01} %
\kwd{60J70} %
\end{keyword}

\begin{keyword}
\kwd{graphical models}
\kwd{diffusion process}
\kwd{Lyapunov equation}
\kwd{structural identifiability}
\kwd{Markov equivalence}
\end{keyword}

\end{frontmatter}

\section{Introduction}

Stationary diffusion processes have recently emerged as a new framework for modeling causal systems in applications where it is difficult, or even impossible, to observe a system more than once. A classic example of this is cellular biology where measurement often destroys the sample. Thus the data can be thought of as independent observations of the stationary distribution of the diffusion process. More concretely, one assumes that the data is generated by a stationary stochastic process $\{\mathbb{X}(t)\}_{t \geq 0}$ which arises as the solution of the stochastic differential equation (SDE):
\begin{equation}
\label{eqn:sde}
\mathrm{d}\mathbb{X}(t) = f(\mathbb{X}(t)) \mathrm{d}t + D \mathrm{d}\mathbb{W}(t),
\end{equation}
where $f$ is a Lipschitz-continuous map that specifies the drift, $D$ is an invertible matrix, and $\mathbb{W}$ is a standard $n$-dimensional Brownian motion. One advantage of this framework over classical structural causal models is that it provides a natural temporal interpretation for causal feedback~loops.

This novel approach to causality was first developed in \cite{katie2019} and \cite{varando2020graphical} with the restriction to \emph{Ornstein--Uhlenbeck processes}. In this case, the SDE above takes the form
\begin{equation}
\label{eqn:ornstein-uhlenbeck}
\mathrm{d}\mathbb{X}(t) = M \mathbb{X}(t) \mathrm{d}t + D \mathrm{d}\mathbb{W}(t),
\end{equation}
where $M$ is an invertible drift matrix
whose entries control the magnitude of cause-effect relations among the components of~$X$ over time. If $M$ is \emph{stable}, i.e., all of its eigenvalues have strictly negative real part, then this process has a unique stationary distribution. It is Gaussian, centered and with covariance matrix $\Sigma$ which is the unique positive definite solution to the \emph{continuous Lyapunov equation}
\begin{equation}
  \label{eq:Lyapunov}
  M\Sigma + \Sigma M^T + DD^T = 0.
\end{equation}
This shows that the stationary distribution depends on $D$ only through the volatility matrix $C = DD^T$.
In the spirit of graphical modeling, we restrict the cause-effect relations in the SDE by imposing a selection of constraints of the form $m_{ji} = 0$. These zeros are encoded in the non-edges of a directed graph~$\cg$. The resulting set of all covariance matrices $\Sigma$ which arise as solutions to the Lyapunov equation for $\cg$-sparse stable matrices $M$ is called the \emph{graphical continuous Lyapunov model} (GCLM) associated to~$\cg$.

Graphical continuous Lyapunov models have received growing interest in recent years as a natural analogue of structural equation models (SEM) to the stochastic process setting. Indeed the SDE in \cref{eqn:sde} is a direct analogue of the structural equation system $X = F(X) + \varepsilon$ for a random vector $X$ and function $F$. If $F(X) = \Lambda X$ is linear and $\Lambda$ is $\cg$-sparse then each $X_i$ is specified as a linear function of its parents plus independent noise. This is the classical setting of Gaussian Bayesian networks. %
The Ornstein--Uhlenbeck process \eqref{eqn:ornstein-uhlenbeck} is entirely analogous: the drift $f(X) = M X$ is linear and the noise from $\mathrm{d} \mathbb W(t)$ is uncorrelated provided that $D$ is diagonal. %

\subsection*{Summary of the Main Contributions}

In this paper, we study the structural identifiability and model equivalence of GCLMs. Two graphs $\cg_1$ and $\cg_2$ are said to be \emph{model equivalent} in the Lyapunov sense if their corresponding GCLMs are equal, that is $\cm_{\cg_1} = \cm_{\cg_2}$. We~provide a complete characterization of the \emph{model equivalence classes} of GCLMs in the case when the matrix $D$ is diagonal, i.e., the noise terms from the Brownian motion are uncorrelated. This includes an explicit characterization of when two graphs define equivalent models, which is the analogue of the well-known result of Verma and Pearl \cite{andersson1997characterization, VermaPearl} that two DAGs define equivalent Bayesian network models if and only if they have the same skeleton and v-structures. We also give a \emph{transformational characterization} of model equivalence which is analogous to that for Bayesian networks due to Chickering~\cite{chickering1995transformational}.

An immediate consequence of our results is that the equivalence classes of Lyapunov models are a \emph{refinement} of those of d-separation, despite the lack of conditional independence that Lyapunov models exhibit.
We note that since Lyapunov models satisfy only marginal independence statements, conditional independence is almost useless as a tool for characterizing when two models are equivalent. This is because the only structure which can be identified using only marginal independencies is the so-called \emph{trek graph} of $\cg$ which is a bidirected graph with an edge between $i$ and $j$ if and only if there exists a trek from $i$ to $j$ in $\cg$. Thus only using the conditional independence information to identify structure would yield a set of model equivalence classes which are much coarser than d-separation. Nevertheless, our results show that Lyapunov models actually encode more fine-grained causal information than Bayesian networks. To obtain these stronger results, we develop several new tools for distinguishing statistical models. Some of these results are specific to the Lyapunov setting such as the \emph{missing-edge relations} which appear in \cref{def: missing edge}. These relations are rank constraints on an associated matrix $A_\cg(\Sigma)$ which encodes the \emph{vectorized} form of Lyapunov equation which was introduced in \cite{dettling2022identifiability}. We show that these relations provide an \emph{implicit description} of the model similar to the well-known description of Bayesian networks via conditional independence~\cite{lauritzen1996graphical}. In addition to these relations, we also show how proving parameter identifiability results can be used to infer structural identifiability results.

\subsection*{Outline}

The remainder of the paper is organized as follows. In \Cref{sec:model} we introduce the Lyapunov graphical models along with basic properties and relevant definitions. In \Cref{sec:main} we present our main results on structural identifiability of the underlying directed graph, including a transformational characterization of model equivalence. \Cref{sec:proofs} is devoted to detailed proofs and explanations of the mathematical tools we use. In \Cref{sec:outlook} we end with a discussion on how some of the tools we develop here could be used to study more complicated models,  such as GCLMs on mixed graphs, and discuss some of the difficulties in this setting.

\section{Graphical Continuous Lyapunov Models}\label{sec:model}

\subsection{Definition of the Model}

A graphical continuous Lyapunov model imposes qualitative restrictions on the drift matrix~$M$ in \cref{eqn:ornstein-uhlenbeck}. These restrictions are encoded in a directed graph $\cg = (V, E)$ with the familiar interpretation from Bayesian networks: if $i \to j \notin E$ then $m_{ji} = 0$ and thus $\mathbb{X}_i(t)$ does not directly influence $\mathbb{X}_j(t)$. We will usually use $V = \{1,\dots,\nrnodes\}$ and denote by $\rr^E$ the set of all real $\nrnodes \times \nrnodes$-matrices $M$ with $m_{ji} = 0$ if $i \to j \notin E$.

\begin{definition}
    Let $\cg = (V, E)$ be a directed graph with vertex set $V$ and edge set $E$ which includes all self-loops $i \to i$, $i \in V$. Let $\PD_\nrnodes$ denote the set of $n\times n$ positive definite matrices. For any fixed $C \in \PD_\nrnodes$, the \emph{graphical continuous Lyapunov model} (GCLM) of $\cg$ is the family of all multivariate normal distributions $\cn(0,\Sigma)$ with covariance matrix $\Sigma$ in the~set
    \[
    \cm_{\cg,C} = \{ \Sigma \in \mathrm{PD}_\nrnodes : M \Sigma + \Sigma M^T + C = 0 \text{ for a stable matrix } M \in \rr^E\}.
    \]
\end{definition}

We now fix terminology and notation pertaining to matrices and graphs.
The $\nrnodes$-dimensional identity matrix is $\Id_\nrnodes$.
Given an $n \times n$-matrix $\Sigma$ and subsets $A, B \subseteq \{1, \dots, n\}$ the submatrix of $\Sigma$ with rows $A$ and columns $B$ is denoted by $\Sigma_{A,B}$; if the two index sets are equal, we write more shortly $\Sigma_A = \Sigma_{A,A}$.

A graph $\cg = (V, E)$ is always understood as a directed graph. Its edge set $E$ is also denoted by~$E(\cg)$. The graph $\cg$ is \emph{simple} if for all distinct $i, j \in V$ it contains at most one of the edges $i \to j$ and $j \to i$. For every vertex $i \in V$ there is a self-loop $i \to i \in E$. For simplicity, we will suppress the self-loops when drawing graphs. Furthermore, self-loops are tacitly ignored when talking about the \emph{parents} $\pa_\cg(i)$, the \emph{children} $\ch_\cg(i)$, the \emph{neighbors} $\neighbors_\cg(i) = \pa_\cg(i) \cup \ch_\cg(i)$, and the notion of acyclicity. Thus, a graph is acyclic (and therefore a DAG) if it does not contain a simple directed cycle of length greater than one. Self-loops are, however, elements of the edge set and so a complete DAG has $\binom{n+1}{2} = n + \binom{n}{2}$ edges.
A \emph{trek} between two nodes $i, j \in V$ is a sequence of edges $i = i_l \ot i_{l-1} \ot \dots \ot i_1 \ot i_0 = j_0 \to j_1 \to \dots \to j_r = j$. The vertex $i_0=j_0$ at which the direction of the arrows changes is called the \emph{top} of the trek. We allow $l=0$ and $r=0$ so that a directed path from $i$ to $j$ or from $j$ to $i$ is a trek.

\subsection{Vectorizations of the Lyapunov Equation}

With $C$ fixed, the Lyapunov equation \eqref{eq:Lyapunov} is linear in either $\Sigma$ or $M$. Therefore it can be turned into a linear system of equations for either in terms of the other through vectorization. This amounts, in one direction, to effectively solving the SDE~\eqref{eqn:ornstein-uhlenbeck} by obtaining the stationary covariance matrix $\Sigma$ directly from the drift~$M$; in the other direction it shows that the drift coefficients are globally identifiable from (the second moments of) the distribution. This observation was first utilized in~\cite{dettling2022identifiability}.

Let $\vec(C) = (c_{kl} : 1 \le k, l \le \nrnodes) \in \rr^{\nrnodes^2}$ denote the (column-wise) vectorization of a matrix. If we regard $M$ as fixed, a matrix $\Sigma$ solves \eqref{eq:Lyapunov} if and only if it solves the vectorized equation
\begin{equation}
  \label{eqn:BM}
  B(M) \vec(\Sigma) = -\vec(C),
\end{equation}
where $B(M) = (\Id_\nrnodes \otimes M + M \otimes \Id_\nrnodes)$ and $\otimes$ denotes the Kronecker product. Since $M$ is stable, $B(M)$ is invertible (in fact, also stable). Thus, Cramer's rule solves for the entries of~$\Sigma$ in terms of~$M$ and~$C$. Specifically, every entry $\sigma_{ij}$ can be written as $\frac{\det(B_{ij}(M; C))}{\det(B(M))}$, where $B_{ij}(M; C)$ is the matrix obtained from $B(M)$ by replacing the $(i,j)$-column by $-\vec(C)$.

On the other hand, if $\Sigma$ is fixed then a matrix $M$ solves \cref{eq:Lyapunov} if and only if it solves the vectorized equation
\begin{equation}
  \label{eq:LyapVecM}
  (\Sigma \otimes \Id_\nrnodes + (\Id_\nrnodes \otimes \Sigma)K_\nrnodes) \vec(M) = -\vec(C),
\end{equation}
where $K_\nrnodes$ is the commutation matrix sending $\vec(A)$ to $\vec(A^T)$ for any $\nrnodes\times\nrnodes$-matrix~$A$. However, since $\Sigma$ and $C$ are symmetric matrices, the matrix $(\Sigma \otimes \Id_\nrnodes + (\Id_\nrnodes \otimes \Sigma)K_\nrnodes)$ has many redundant rows. We use the following subsystem from~\cite{dettling2022identifiability}:

\begin{definition} \label{def:Asigma}
Let $\cg = (V, E)$ be a graph and consider the $\binom{\nrnodes+1}{2} \times |E|$ matrix $A_\cg(\Sigma)$ whose rows are indexed by pairs $(k,l)$ with $k \le l$ and columns by edges $i \to j \in E$. Its entries are given by
\begin{equation}
\label{eqn:AsigmaEntries}
   A_\cg(\Sigma)_{(k,l),i\to j}=
   \begin{cases}
   0, \quad &\text{if} \quad j \neq k,l; \\
   \sigma_{li}, \quad &\text{if} \quad j= k,\, k \neq l;\\
   \sigma_{ki}, \quad &\text{if} \quad j=l, \, l \neq k;\\
   2\sigma_{ji}, \quad &\text{if} \quad j=k=l.
   \end{cases}
\end{equation}
We write $A(\Sigma)$ when $E = V \times V$ includes every possible edge.
\end{definition}

Then $M$ solves the vectorized Lyapunov equation~\eqref{eq:LyapVecM} if and only if $M$ solves %
\begin{equation}
\label{eq:Asigma}
A(\Sigma) \vec(M) = -\vech(C),
\end{equation}
where $\vech(C) = (c_{kl} : k \leq l)$ is the \emph{half-vectorization} of the symmetric matrix $C$.

\begin{example} \label{eg:Asigma}
For $n=3$ nodes the full $A(\Sigma)$ matrix is
\begin{align*}
\begin{blockarray}{@{\hspace{10pt}}c@{\hspace{10pt}}c@{\hspace{10pt}}c@{\hspace{10pt}}c@{\hspace{10pt}}c@{\hspace{10pt}}c@{\hspace{10pt}}c@{\hspace{10pt}}c@{\hspace{10pt}}c@{\hspace{10pt}}c}
& \text{\small$1\to 1$} & \text{\small$1\to 2$} & \text{\small$1\to 3$} & \text{\small$2 \to 1$} & \text{\small$2 \to 2$} & \text{\small$2 \to 3$} & \text{\small$3 \to 1$} & \text{\small$3 \to 2$} & \text{\small$3 \to 3$} \\
\begin{block}{c(ccccccccc)}
    \text{\small$(1,1)$}&2 \sigma_{11} &0&0&2 \sigma_{12} & 0 &0 & 2\sigma_{13} & 0 & 0 \\
    \text{\small$(1,2)$}&\sigma_{12} & \sigma_{11} &0& \sigma_{22}  & \sigma_{12} &0& \sigma_{23} & \sigma_{13} &0\\
    \text{\small$(1,3)$}&\sigma_{13} &0&\sigma_{11}& \sigma_{23} & 0 & \sigma_{12}& \sigma_{33} & 0 & \sigma_{13}\\
    \text{\small$(2,2)$}&0 &2\sigma_{12}& 0& 0 & 2 \sigma_{22} &0& 0 & 2\sigma_{23} & 0 \\
    \text{\small$(2,3)$}&0 & \sigma_{13}&\sigma_{12}& 0 & \sigma_{23}& \sigma_{22}   & 0 & \sigma_{33} & \sigma_{23}\\
    \text{\small$(3,3)$}&0 & 0&2\sigma_{13}& 0 & 0&2\sigma_{23} & 0 & 0 & 2 \sigma_{33}\\
\end{block}
\end{blockarray} \; .
\end{align*}
For every graph $\cg = (V, E)$ the matrix $A_\cg(\Sigma)$ is the submatrix of $A(\Sigma)$ which selects the columns indexed by the edges~$E$ present in~$\cg$.
\end{example}

Note that $A_\cg(\Sigma)$ is only a square matrix if the graph $\cg$ has $\binom{n+1}{2}$ edges. Thus if $\cg$ is a simple graph (and includes all self-loops on nodes), this holds if and only if $\cg$ is complete. Identifiability of $M$ from $\Sigma$ hinges on the rank of this matrix.

\begin{theorem}[{\cite[Lemma 4.3, Theorem 7.1]{dettling2022identifiability}}]
\label{thm:LyapIdentifiability}
Let $\cg = (V, E)$ be a simple graph. Then $A_\cg(\Sigma)$ is full rank, implying that the model $\cm_{\cg, C}$ is \emph{globally identifiable} for all $C \in \pd_\nrnodes$. In~particular, if $\cg$ is simple and complete then
\begin{equation}
  \label{eq:LyapIdentifiability}
  m_{ji} = \frac{\det(A_\cg^{ij}(\Sigma; C))}{\det(A_\cg(\Sigma))}
\end{equation}
holds for any $M \in \rr^E$ and $\Sigma \in \cm_{\cg,C}$ solving~\eqref{eq:Lyapunov}. Here, $A_\cg^{ij}(\Sigma; C)$ is the matrix $A_\cg(\Sigma)$ in which the $(i,j)$-column is replaced by $-\vech(C)$.
\end{theorem}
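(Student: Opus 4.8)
The plan is to reduce the theorem to a single statement --- that $A_\cg(\Sigma)$ has trivial kernel --- and then to settle that by a short trace computation.

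First I would note that, since $\cg$ is simple and contains all self-loops, $|E| \le n + \binom{n}{2} = \binom{n+1}{2}$, so $A_\cg(\Sigma)$ has at least as many rows as columns and ``full rank'' means $\ker A_\cg(\Sigma) = 0$. The matrix $A_\cg(\Sigma)$ is the submatrix of $A(\Sigma)$ picking out the columns indexed by the edges of $\cg$, and the linear identity underlying \eqref{eq:Asigma} is $A(\Sigma)\vec(M) = \vech\bigl(M\Sigma + \Sigma M^T\bigr)$ for arbitrary $M$. Since $\vech$ is injective on symmetric matrices, a nonzero vector of $\ker A_\cg(\Sigma)$ is exactly $\vec(M)$ for a nonzero $M \in \rr^E$ with $M\Sigma + \Sigma M^T = 0$. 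So the whole theorem comes down to the claim: for every $\Sigma \in \pd_\nrnodes$, the only $\cg$-sparse solution of the homogeneous Lyapunov equation $M\Sigma + \Sigma M^T = 0$ is $M = 0$.

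To prove this claim I would conjugate by the positive-definite square root: put $N = \Sigma^{-1/2} M \Sigma^{1/2}$. Then $N + N^T = \Sigma^{-1/2}\bigl(M\Sigma + \Sigma M^T\bigr)\Sigma^{-1/2} = 0$, so $N$ is skew-symmetric; hence, $M$ being similar to $N$, we get $\tr(M^2) = \tr(N^2) = -\tr(N N^T) \le 0$, with equality only when $N = 0$. On the other hand, because $\cg$ is simple, for distinct $i,j$ at most one of the edges $i \to j$, $j \to i$ is present, so at least one of $m_{ij}, m_{ji}$ vanishes and $m_{ij} m_{ji} = 0$; consequently $\tr(M^2) = \sum_{i,j} m_{ij} m_{ji} = \sum_i m_{ii}^2 \ge 0$. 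The two bounds force $\tr(M^2) = 0$, hence $N = 0$ and $M = \Sigma^{1/2} N \Sigma^{-1/2} = 0$. This gives $\ker A_\cg(\Sigma) = 0$ for every positive-definite $\Sigma$, i.e. the full-rank statement.

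The remaining assertions then follow formally. For $\Sigma \in \cm_{\cg, C}$ there is, by definition of the model, a stable $M \in \rr^E$ with $M\Sigma + \Sigma M^T + C = 0$, hence $A_\cg(\Sigma)\vec(M) = -\vech(C)$ by \eqref{eq:Asigma}; triviality of the kernel makes this $M$ the unique $\cg$-sparse solution, so the drift is a function of $\Sigma$ alone and the model is globally identifiable. If $\cg$ is moreover complete then $|E| = \binom{n+1}{2}$, so $A_\cg(\Sigma)$ is square and invertible and Cramer's rule applied to $A_\cg(\Sigma)\vec(M) = -\vech(C)$ yields \eqref{eq:LyapIdentifiability}. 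The step needing the most care is the reduction in the second paragraph, namely matching $\ker A_\cg(\Sigma)$ with the $\cg$-sparse homogeneous Lyapunov solutions, but that is routine bookkeeping from the way \eqref{eq:Asigma} is obtained; the heart of the argument --- that the Lyapunov kernel condition makes $M$ conjugate to a real skew-symmetric matrix, so $\tr(M^2) \le 0$, while simplicity of $\cg$ makes $\tr(M^2)$ a sum of squares --- is then a one-line contradiction. (If one wished to avoid even that bookkeeping one could instead extend $\cg$ to a complete simple supergraph $\cg'$ and use that the columns of $A_\cg(\Sigma)$ are a subset of those of the square matrix $A_{\cg'}(\Sigma)$; but the trace identity makes the detour unnecessary.)
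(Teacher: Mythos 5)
This theorem is imported from \cite[Lemma 4.3, Theorem 7.1]{dettling2022identifiability} and the paper gives no proof of its own, so there is nothing internal to compare against; judged on its own, your argument is correct and complete. The reduction to the homogeneous kernel condition $M\Sigma+\Sigma M^T=0$, the conjugation $N=\Sigma^{-1/2}M\Sigma^{1/2}$ making $N$ skew-symmetric so that $\tr(M^2)=\tr(N^2)\le 0$, and the observation that simplicity forces $\tr(M^2)=\sum_i m_{ii}^2\ge 0$ is essentially the same skew-symmetry/trace argument used in the cited reference, and the passage to global identifiability and Cramer's rule is routine as you say.
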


\subsection{Missing-edge Relations}

In a graphical continuous Lyapunov model, the solution $M$ to \eqref{eq:Asigma} is subject to sparsity constraints given by the graph~$\cg$. The existence of a $\cg$-sparse solution to \eqref{eq:Asigma} puts additional (non-linear) constraints on $A_\cg(\Sigma)$ and thus ultimately on $\Sigma$. Our first observation is that these non-linear constraints characterize the GCLM.
\begin{definition} \label{def: missing edge}
Let $\cg$ be a simple directed graph and $C \in \PD_\nrnodes$. The graph $\cg'$ on the same vertex set is a \emph{completion} of $\cg$ if $\cg'$ is simple, $E(\cg) \subseteq E(\cg')$ and $|E(\cg')| = \binom{n+1}{2}$. To~any edge $i \to j \in E(\cg') \setminus E(\cg)$ added in the completion, there is a \emph{missing-edge relation}
\begin{equation}
  \label{eq:MissingEdge}
  \det(A_{\cg'}^{ij}(\Sigma; C))=0.
\end{equation}
The function of $\Sigma$ which appears on the left-hand side of \eqref{eq:MissingEdge} %
is denoted by $f^{ij}_{\cg'}(\Sigma; C)$.
\end{definition}
It is easy to see that every simple graph has a completion.

\begin{proposition} \label{prop:MissingEdge}
Let $\cg = (V, E)$ be a simple directed graph, $C \in \PD_\nrnodes$, and $\cg'$ a completion of~$\cg$. A positive definite matrix $\Sigma$ belongs to $\cm_{\cg,C}$ if and only if $f^{ij}_{\cg'}(\Sigma; C) = 0$ for every missing-edge relation with respect to~$\cg'$.
\end{proposition}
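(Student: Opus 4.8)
The plan is to relate membership in $\cm_{\cg,C}$ to the solvability of the vectorized Lyapunov system~\eqref{eq:Asigma} with the sparsity pattern prescribed by~$\cg$, and then to translate that solvability into the vanishing of the missing-edge determinants via Cramer's rule applied to the completion $\cg'$. First I would record the elementary equivalence: a positive definite $\Sigma$ lies in $\cm_{\cg,C}$ if and only if there is a stable $M \in \rr^E$ with $M\Sigma + \Sigma M^T + C = 0$, which by the discussion preceding~\eqref{eq:Asigma} holds if and only if the vectorized equation $A(\Sigma)\vec(M) = -\vech(C)$ has a solution $M$ supported on the edges of~$\cg$ (the stability of $M$ being automatic: the unique positive definite solution of a Lyapunov equation $M\Sigma+\Sigma M^T = -C$ with $C \succ 0$ forces $M$ stable, a standard fact I would cite or quickly recall). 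Restricting $\vec(M)$ to the coordinates indexed by $E(\cg)$ turns this into $A_\cg(\Sigma)\,m_E = -\vech(C)$, so the question becomes: when does this (generally overdetermined) linear system in $m_E$ have a solution?

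Next I would invoke \Cref{thm:LyapIdentifiability} applied to the completion $\cg'$: since $\cg'$ is simple and complete, $A_{\cg'}(\Sigma)$ is square and full rank, hence invertible. Therefore the system $A_{\cg'}(\Sigma)\, m_{E'} = -\vech(C)$ has a \emph{unique} solution, whose entries are given by Cramer's rule as $m_{ji} = \det(A_{\cg'}^{ij}(\Sigma;C)) / \det(A_{\cg'}(\Sigma)) = f^{ij}_{\cg'}(\Sigma;C)/\det(A_{\cg'}(\Sigma))$. Now observe that $A_\cg(\Sigma)\, m_E = -\vech(C)$ has a solution if and only if the unique solution $m_{E'}$ of the completed system happens to be supported on $E(\cg)$, i.e. $m_{ji} = 0$ for every edge $i \to j \in E(\cg') \setminus E(\cg)$: the "if" direction is immediate by restriction, and for "only if", a solution of the $\cg$-system extends by zeros to a solution of the $\cg'$-system, which by uniqueness must coincide with $m_{E'}$, forcing those coordinates of $m_{E'}$ to vanish. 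Combining this with the Cramer expression, $m_{ji} = 0$ for the added edges is exactly the condition $f^{ij}_{\cg'}(\Sigma;C) = 0$ for all missing-edge relations, since $\det(A_{\cg'}(\Sigma)) \ne 0$.

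The one point requiring a little care is the passage between "$\Sigma \in \cm_{\cg,C}$" and "the $\cg$-sparse vectorized system is solvable": the definition of $\cm_{\cg,C}$ demands a \emph{stable} $M$, whereas the linear-algebra argument produces \emph{some} $M$. I expect this to be the main (mild) obstacle, and I would handle it by the uniqueness argument above — any $\cg$-sparse solution of~\eqref{eq:Asigma}, when extended by zeros, equals the unique solution of the invertible completed system, and conversely when the missing-edge relations vanish this unique $M$ is itself $\cg$-sparse; its stability then follows because $\Sigma \succ 0$ solves the Lyapunov equation for this $M$ with positive definite right-hand side $C$. Assembling these pieces gives the stated biconditional. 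I would also remark that the resulting characterization is independent of the choice of completion $\cg'$, since the set it cuts out is intrinsically $\cm_{\cg,C}$.
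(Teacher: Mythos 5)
Your proposal is correct and follows essentially the same route as the paper: vectorize the Lyapunov equation, apply \cref{thm:LyapIdentifiability} to the completion $\cg'$ to get invertibility of $A_{\cg'}(\Sigma)$, and use Cramer's rule to equate the vanishing of the extra coordinates $m_{ji}$ with the missing-edge relations. Your explicit remark that stability of the recovered $M$ is automatic from $M\Sigma+\Sigma M^T=-C$ with $\Sigma, C \succ 0$ is a welcome clarification of a point the paper's converse direction leaves implicit.
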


\begin{proof}
Recall that the Lyapunov equation \eqref{eq:Lyapunov} establishes a bijection between stable drift matrices $M \in \rr^E$ and positive definite covariance matrices $\Sigma \in \cm_{\cg,C}$. Let $E'$ be the edge set of $\cg'$. Then $\rr^E$ is naturally a subspace of $\rr^{E'}$ defined by $m_{ji} = 0$ for $i \to j \in E' \setminus E$.

If $\Sigma \in \cm_{\cg,C}$ then $M$ is the unique solution to $A_{\cg}(\Sigma) \vec(M) = -\vech(C)$. Equivalently, it is the unique solution to $A_{\cg'}(\Sigma) \vec(M) = -\vech(C)$ with $m_{ji} = 0$ for $i \to j \in E' \setminus E$. By \Cref{thm:LyapIdentifiability}, $A_{\cg'}(\Sigma)$ is an invertible square matrix and Cramer's rule shows $m_{ji} = f^{ij}_{\cg'}(\Sigma; C) / \det(A_{\cg'}(\Sigma))$. Hence, $m_{ji} = 0$ if and only if $f^{ij}_{\cg'}(\Sigma; C) = 0$.

Conversely, if $\Sigma$ is positive definite then there exists a unique stable matrix $M$ such that $(M, \Sigma)$ together satisfy the Lyapunov equation~\eqref{eq:Lyapunov}. This $M$ is the (necessarily unique) solution to the linear system $A_{\cg'}(\Sigma) \vec(M) = -\vech(C)$. Since $\Sigma$ satisfies all missing-edge relations with respect to $\cg'$, Cramer's rule implies $m_{ji} = 0$ for $i \to j \in E' \setminus E$. Thus $\Sigma$ is obtained as the positive definite solution to \eqref{eq:Lyapunov} for some stable $M \in \rr^E$ which proves $\Sigma \in \cm_{\cg,C}$.
\end{proof}

\begin{remark} \label{rem:MissingEdge}
The same general approach to characterizing graphical models is also at work in the realm of Bayesian networks but is often stated in terms of conditional independence. Namely, if $i \to j$ is a missing edge in a directed acyclic graph with $i < j$ then the conditional independence $[i \indep j \mid \pa(j)]$ holds in the Bayesian network. This, in turn, imposes an equation on all covariance matrices associated to the model, namely the vanishing of the partial correlation of $i$ and $j$ given~$\pa(j)$. It is well-known that this \emph{ordered pairwise Markov property} characterizes the Gaussian Bayesian network. In this sense, our relations in \Cref{prop:MissingEdge} are to Lyapunov models what conditional independence is to Bayesian networks. It remains a challenge, however, to ascribe a statistical interpretation to the missing-edge relations.
\end{remark}

To motivate the following developments, consider the \emph{forward path} $1 \to 2 \to 3$ and the \emph{backward path} $1 \ot 2 \ot 3$. A fundamental obstacle in applications of Bayesian networks to causal reasoning is that these two graphs yield the same statistical model (defined by the conditional independence $[1 \indep 3 \mid 2]$) even though they make opposite assertions about causality. In this situation there is no hope to statistically infer the true causal structure from observational data alone. For this example (and indeed many others), Lyapunov models provide a remedy.

\begin{example} \label{eg:123}
The GCLMs corresponding to the forward and the backward $3$-paths are distinguishable using \Cref{prop:MissingEdge}. We fix $C = 2\Id_3$ to illustrate the idea. The missing edge relation of $1 \to 2 \to 3$ with respect to the completion $\cg'$ adding the edge $1 \to 3$ mandates that the matrix $A_{\cg'}^{13}(\Sigma; 2\Id_3)$ is singular for every $\Sigma$ in the forward path GCLM, i.e.,
\begin{equation}
\label{eq:123}
 \det
\begin{blockarray}{@{\hspace{10pt}}c@{\hspace{10pt}}c@{\hspace{10pt}}c@{\hspace{10pt}}c@{\hspace{10pt}}c@{\hspace{10pt}}c@{\hspace{10pt}}c@{\hspace{10pt}}c@{\hspace{10pt}}c@{\hspace{10pt}}c}
& \text{\small$1\to 1$} & \text{\small$1\to 2$} & \text{\small$1\to 3$} & \text{\small$2 \to 2$} & \text{\small$2 \to 3$} & \text{\small$3 \to 3$} \\
\begin{block}{c(ccccccccc)}
    \text{\small$(1,1)$}&2 \sigma_{11} &0&-2 & 0 &0 & 0 \\
    \text{\small$(1,2)$}&\sigma_{12} & \sigma_{11} &0 & \sigma_{12} &0& \sigma_{23} \\
    \text{\small$(1,3)$}&\sigma_{13} &0&0& 0 & \sigma_{12}& \sigma_{33} \\
    \text{\small$(2,2)$}&0 &2\sigma_{12}& -2& 2 \sigma_{22} &0& 0 \\
    \text{\small$(2,3)$}&0 & \sigma_{13}&0 & \sigma_{23}& \sigma_{22}   & 0 \\
    \text{\small$(3,3)$}&0 & 0&-2& 0&2\sigma_{23} & 0 \\
\end{block}
\end{blockarray} \; = 0.
\end{equation}
Almost every covariance matrix in the backward path model will \emph{not} satisfy this equation! For~example, take the stable matrix $M$ below whose sparsity pattern aligns with the backward path and solve the Lyapunov equation for $\Sigma$:
\begingroup\setlength\arraycolsep{4pt}
\begin{equation}
  M = \begin{pmatrix}
    -1 &  1 &  0 \\
     0 & -1 &  1 \\
     0 &  0 & -1
  \end{pmatrix}, \qquad
  \Sigma = \begin{pmatrix}
    \sfrac{15}{8} & \sfrac78 & \sfrac14 \\
    \sfrac78 & \sfrac32 & \sfrac12 \\
    \sfrac14 & \sfrac12 & 1
  \end{pmatrix}.
\end{equation}
\endgroup
The corresponding matrix from \eqref{eq:123} has determinant $\frac{13\,113}{256}$ which shows that $\Sigma$ is in the backward but not in the forward path model, hence distinguishes the two GCLMs.
\end{example}

\section{Main Results}\label{sec:main}

In this section we give formal definitions of \emph{model equivalence} and \emph{structural identifiability} and then state our main identifiability results for GCLMs. Namely, we provide a complete characterization of all structurally identifiable GCLMs given by directed acyclic graphs (DAGs) with uncorrelated noise, and a description of their model equivalence classes via induced subgraphs. This result can be seen as an analogue of the well-known description of the Markov equivalence classes of Bayesian networks given by Verma and Pearl in \cite{VermaPearl}. Additionally, we derive a \emph{transformational characterization} of model equivalence in this setting which describes how a DAG may be transformed into any other DAG in the same model equivalence class by flipping certain edges. This establishes a Lyapunov analogue of Chickering's transformational characterization for Bayesian networks~\cite{chickering1995transformational}.

Our results require uncorrelated noise, i.e., we shall assume that $C$ is diagonal. A simple transformation due to \cite[Proposition~3.7]{dettling2022identifiability} even allows us to fix $C = 2 \Id_\nrnodes$ without loss of generality. We shall abbreviate notation correspondingly, in particular $\cm_{\cg} \defas \cm_{\cg,2 \Id_\nrnodes}$. As~now the GCLMs under consideration are entirely determined by a DAG $\cg$ --- without cycles or hidden confounders --- they invite comparisons to classical Bayesian networks. Some of our results hold for more general $C$ at the expense of becoming more technical to state. We will return to this topic in \Cref{sec:outlook}.

\begin{definition}
Two graphs $\cg_1$ and $\cg_2$ on the same vertex set~$V$ are \emph{model equivalent} (in the Lyapunov sense) if $\cm_{\cg_1} = \cm_{\cg_2}$. This induces an equivalence relation on all graphs over~$V$.
Fix a class $\mathcal{L}$ of graphs over~$V$. A graph $\cg \in \mathcal{L}$ is \emph{structurally identifiable within $\mathcal{L}$} if its model equivalence class intersects $\mathcal{L}$ in exactly one element.
\end{definition}

\begin{remark}
A stronger form of structural identifiability, known as \emph{global structural identifiability}, requires that the models $\{\cm_\cg\}_{\cg \in \mathcal{L}}$ be pairwise disjoint. We cannot hope for global identifiability as (a) every model contains the standard normal distribution, and (b) any orientation $D_\nrnodes$ of the complete graph has as its model $\cm_{D_\nrnodes} = \PD_\nrnodes$ and thus contains every other model. Even besides containment relations, distinct Lyapunov models may also intersect each other in non-trivial ways.
Our definition agrees instead with \emph{generic structural identifiability} (also known as \emph{model distinguishability}) from, e.g.,~\cite{drton2023identifiabilityhomoscedasticlinearstructural}.
Standard arguments (presented for completeness in \cref{sec:Tools}) show that if $\cm_{\cg_1} \neq \cm_{\cg_2}$ then the following apparently stronger condition holds:
\begin{equation}
\label{eqn:new-gen-id}
  \dim(\cm_{\cg_1}\cap \cm_{\cg_2}) < \max\left(\dim(\cm_{\cg_1}),\dim(\cm_{\cg_2})\right).
\end{equation}
Hence, the intersection of the two models is a Lebesgue null set in their union which means that a sample covariance matrix from the union can be assigned almost surely to one and only one of the two models under the assumption that the sample is generic with respect to the true model which generated it. Note that the definition in \cref{eqn:new-gen-id}
only differs from the standard definition of generic identifiability which is given by
\begin{equation}
  \dim(\cm_{\cg_1}\cap \cm_{\cg_2}) < \min \left(\dim(\cm_{\cg_1}),\dim(\cm_{\cg_2})\right)
\end{equation}
in the case that the models involved are \emph{nested}, meaning that $\cm_{\cg_1} \subseteq \cm_{\cg_2}$. This new definition is motivated by the fact that if the true data-generating distribution lies in the smaller model $\cm_{\cg_1} \subseteq \cm_{\cg_2}$, then model selection methods like the Lasso from \cite{dettling2022lasso} would be able to consistently infer that $\cm_{\cg_1}$ is the true model.
\end{remark}

Bayesian network models (and similarly Gaussian linear structural equation models) are completely defined by conditional independence constraints which are derived from the graph using the d-separation criterion \cite{lauritzen1996graphical}. Two graphs yield the same Bayesian network model if and only if the two graphs have the same set of d-separations. This characterization of Bayesian network models was instrumental to the well-known model equivalence results derived for them \cite{andersson1997characterization, VermaPearl}.  For GCLMs, the situation is very different. It was recently shown that all conditional independencies for GCLMs follow from \emph{marginal} independencies, i.e., zeros in the covariance matrix, which can be derived from the associated trek graph; cf.~\cite{drton2024conditional}. This shows that conditional independence is vastly insufficient for understanding model equivalence between GCLMs and thus makes the problem substantially more difficult than the classical case. For example, the forward and backward paths in \Cref{eg:123} both satisfy no marginal and therefore no conditional independencies and yet they are distinguishable from each other.

Thus, we shall take a slightly different perspective. By a celebrated result of Verma and Pearl~\cite{VermaPearl}, two DAGs on the same vertex set $V$ define the same Bayesian network model if and only if they have the same skeleton and the same v-structures. In other words, $\cg_1$ and $\cg_2$ are model equivalent if and only if the induced subgraphs $\cg_1[K]$ and $\cg_2[K]$ are model equivalent for all $K \subseteq V$ with $|K|=3$. When understood this way, our first main result is a direct analogue for Lyapunov DAG models.

\begin{theorem}
\label{thm:struct_id_induced_subgraph}
Let $\cg_1$ and $\cg_2$ be DAGs on vertex set~$V$. Then $\cg_1$ and $\cg_2$ are model equivalent if and only if they have the same skeleton and the induced subgraphs $\cg_1[K]$ and $\cg_2[K]$ are model equivalent for all $K \subseteq V$ with $|K|=4$.
\end{theorem}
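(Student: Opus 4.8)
The plan is to establish both directions of the equivalence by leveraging the missing-edge relations from \Cref{prop:MissingEdge} together with the local-to-global structure of these relations. The ``only if'' direction is the easy one: if $\cg_1$ and $\cg_2$ are model equivalent then they are globally identifiable with the same model (\Cref{thm:LyapIdentifiability}), and since the skeleton of a DAG is recoverable from the model via the trek graph (two nodes are adjacent in $\cg$ if and only if the corresponding marginal independence fails robustly), $\cg_1$ and $\cg_2$ must share a skeleton. For the induced $4$-node subgraphs, I would argue that a GCLM ``marginalizes'' correctly onto any subset $K$ of vertices in the sense that $\cm_{\cg[K]}$ is determined by $\cm_\cg$: concretely, $\Sigma \in \cm_{\cg[K]}$ can be tested by the missing-edge relations of a completion of $\cg[K]$, and these relations are inherited from (restrictions of) those of a compatible completion of $\cg$. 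Hence $\cm_{\cg_1} = \cm_{\cg_2}$ forces $\cm_{\cg_1[K]} = \cm_{\cg_2[K]}$ for every $K$, in particular every $4$-set.

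For the ``if'' direction — the substantive one — I would show that the missing-edge relations of $\cg$ with respect to a suitable completion $\cg'$ are \emph{generated}, in the sense that their common zero locus on $\PD_\nrnodes$ is determined, by the missing-edge relations of the induced $4$-node subgraphs. The key structural observation to prove is that each missing-edge relation $f^{ij}_{\cg'}(\Sigma; 2\Id_\nrnodes) = 0$ for a non-edge $i \to j$ depends only on a bounded neighborhood of $\{i,j\}$ in $\cg$ and can be reconstructed from the $4$-node relations involving $i$, $j$, and two further vertices. I would carry this out by a careful analysis of the matrix $A_{\cg'}(\Sigma)$ and a Laplace/cofactor expansion of the determinant defining $f^{ij}_{\cg'}$, using the sparsity pattern in \eqref{eqn:AsigmaEntries} to localize the relevant rows and columns. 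Given that $\cg_1$ and $\cg_2$ have the same skeleton, a non-edge $i \to j$ is a non-edge in both (or its reverse is present in both, or neither orientation is), and the equality $\cg_1[K] \sim \cg_2[K]$ for all $4$-sets $K \ni i,j$ means every $4$-node missing-edge relation agrees; by the localization this forces $f^{ij}_{\cg_1'}$ and $f^{ij}_{\cg_2'}$ to cut out the same variety, and \Cref{prop:MissingEdge} then yields $\cm_{\cg_1} = \cm_{\cg_2}$.

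The main obstacle is proving the localization claim: that a missing-edge relation of the full DAG is controlled by the $4$-node induced subgraphs, i.e.\ that no genuinely ``global'' obstruction arises from longer treks or larger substructures. A priori the determinant $f^{ij}_{\cg'}(\Sigma)$ is a polynomial in all entries of $\Sigma$, so one must show that membership in its zero set (intersected with $\PD_\nrnodes$) is detected at scale $4$. I expect this to require either a clever block decomposition of $A_{\cg'}(\Sigma)$ adapted to an ancestral or topological ordering of $\cg$, together with an induction on $|V|$ that peels off sink (or source) vertices while preserving the relevant $4$-node data, or alternatively a direct argument that any minimal counterexample — two skeleton-equal, $4$-equivalent but non-model-equivalent DAGs — can be restricted to a vertex set of size at most $4$ witnessing the inequality, contradicting $4$-equivalence. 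A secondary technical point is handling the choice of completion $\cg'$ consistently for $\cg_1$ and $\cg_2$ so that their relations can be compared edge by edge; since a non-edge may be completable in more than one way, one should either fix a canonical completion compatible with a shared topological order or show the zero locus is independent of this choice.
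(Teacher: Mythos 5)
There are genuine gaps in both directions, and the hard direction is left essentially unproven. For the ``only if'' part, your claim that the skeleton is recoverable because ``two nodes are adjacent in $\cg$ if and only if the corresponding marginal independence fails robustly'' is false: in the path $1 \to 2 \to 3$ the nodes $1$ and $3$ are non-adjacent yet connected by a trek, so $\sigma_{13} \neq 0$ generically and no marginal independence holds. The trek graph only detects trek-connectedness, not adjacency, which is exactly why the paper needs a separate argument (\cref{lemma:skel_struct_id}, via parameter identifiability of a common simple extension and \cref{lem:lyap_struct_id_from_param_id}). Likewise, your statement that the model ``marginalizes correctly'' onto a subset $K$ is not what is true or what is needed: the marginal of $\cm_\cg$ onto $K$ is not $\cm_{\cg[K]}$ in general. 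The paper instead isolates the vertices of $V \setminus K$ (deleting their incident edges) and shows via \cref{lem:induced_subgraph_linear} that the resulting model equals $\cm_\cg$ intersected with the graph-independent linear space $\{\sigma_{ij}=0 : i \in V\setminus K,\ j\neq i\}$, from which \cref{prop:induced_subgraph_distinguish} follows. Your ``inherited restrictions of missing-edge relations'' would need to be made precise along these lines.

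For the ``if'' direction, the entire burden of the proof rests on your localization claim --- that each missing-edge relation $f^{ij}_{\cg'}(\Sigma)$, a determinant of the $\binom{\nrnodes+1}{2}\times\binom{\nrnodes+1}{2}$ matrix $A^{ij}_{\cg'}(\Sigma;C)$ involving essentially all entries of $\Sigma$, is controlled by the $4$-node relations --- and you correctly identify this as the main obstacle but offer no proof, only candidate strategies. There is no evidence that such a Laplace-expansion localization exists, and the paper does not attempt one. Instead it takes a transformational route that you do not touch: classify the $4$-node equivalence classes by direct computation (\cref{lemma:super-covered-4-node-flips}), show that an edge is super-covered if and only if it is super-covered in every induced $4$-node subgraph (\cref{lemma:small-to-big-super-covered}), run Chickering's Find-Edge ordering to prove the $\prec$-minimal differing edge is super-covered (\cref{thm:4-to-infinity}), and separately prove that a super-covered edge flip preserves the model by reducing to almost-complete graphs and exhibiting an explicit kernel vector of $A_\cg(\Sigma)$ (\cref{lemma: almost complete,prop: edge flip}). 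Without either carrying out your localization or supplying an argument of comparable substance, the proposal does not establish the theorem.
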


This theorem reduces the problem of determining all model equivalence classes --- and thus all structurally identifiable graphs --- to characterizing the model equivalence classes of graphs with four nodes. A series of computations similar to that of \cref{eg:123} shows that every Lyapunov DAG model on $n \le 3$ vertices is structurally identifiable (within the class of Lyapunov DAG models) except for graphs which contain a $2$- or $3$-clique as a weakly connected component. For $n=4$ there are seven types of non-identifiable models given in \Cref{fig:NonIdent}.
This classification furnishes a graphical polynomial-time algorithm for checking model equivalence and is similar in spirit to the characterizations of many graph properties, such as planarity or chordality, in terms of \emph{forbidden minors}.

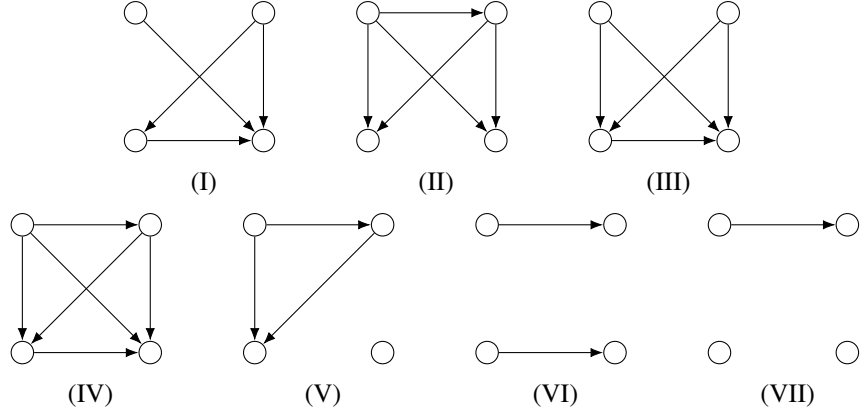
\begin{figure}
\renewcommand\thesubfigure{\Roman{subfigure}}%
\centering%
\begin{subfigure}[b]{0.15\textwidth}
\begin{tikzpicture}[every node/.style={draw, circle, inner sep=3pt}, scale=1.7, every edge/.style={draw, >=Latex}]
\node (1) at (0,0) {}; \node (2) at (1,0) {};
\node (3) at (0,-1) {}; \node (4) at (1,-1) {};
\draw (1) edge[->] (4); \draw (2) edge[->] (3);
\draw (2) edge[->] (4); \draw (3) edge[->] (4);
\end{tikzpicture}
\caption{} \label{fig:NonIdent1}
\end{subfigure}
\hspace{2.5em}
\begin{subfigure}[b]{0.15\textwidth}
\begin{tikzpicture}[every node/.style={draw, circle, inner sep=3pt}, scale=1.7, every edge/.style={draw, >=Latex}]
\node (1) at (0,0) {}; \node (2) at (1,0) {};
\node (3) at (0,-1) {}; \node (4) at (1,-1) {};
\draw (1) edge[->] (2); \draw (1) edge[->] (3); \draw (1) edge[->] (4);
\draw (2) edge[->] (3); \draw (2) edge[->] (4);
\end{tikzpicture}
\caption{} \label{fig:NonIdent2}
\end{subfigure}
\hspace{2.5em}
\begin{subfigure}[b]{0.15\textwidth}
\begin{tikzpicture}[every node/.style={draw, circle, inner sep=3pt}, scale=1.7, every edge/.style={draw, >=Latex}]
\node (1) at (0,0) {}; \node (2) at (1,0) {};
\node (3) at (0,-1) {}; \node (4) at (1,-1) {};
\draw (1) edge[->] (3); \draw (1) edge[->] (4);
\draw (2) edge[->] (3); \draw (2) edge[->] (4);
\draw (3) edge[->] (4);
\end{tikzpicture}
\caption{} \label{fig:NonIdent3}
\end{subfigure}
\hspace{2.5em} \\
\begin{subfigure}[b]{0.15\textwidth}
\begin{tikzpicture}[every node/.style={draw, circle, inner sep=3pt}, scale=1.7, every edge/.style={draw, >=Latex}]
\node (1) at (0,0) {}; \node (2) at (1,0) {};
\node (3) at (0,-1) {}; \node (4) at (1,-1) {};
\draw (1) edge[->] (2); \draw (1) edge[->] (3); \draw (1) edge[->] (4);
\draw (2) edge[->] (3); \draw (2) edge[->] (4); \draw (3) edge[->] (4);
\end{tikzpicture}
\caption{} \label{fig:NonIdent4}
\end{subfigure}
\hspace{2.5em}
\begin{subfigure}[b]{0.15\textwidth}
\begin{tikzpicture}[every node/.style={draw, circle, inner sep=3pt}, scale=1.7, every edge/.style={draw, >=Latex}]
\node (1) at (0,0) {}; \node (2) at (1,0) {};
\node (3) at (0,-1) {}; \node (4) at (1,-1) {};
\draw (1) edge[->] (2); \draw (1) edge[->] (3); \draw (2) edge[->] (3);
\end{tikzpicture}
\caption{} \label{fig:NonIdent5}
\end{subfigure}
\hspace{2.5em}
\begin{subfigure}[b]{0.15\textwidth}
\begin{tikzpicture}[every node/.style={draw, circle, inner sep=3pt}, scale=1.7, every edge/.style={draw, >=Latex}]
\node (1) at (0,0) {}; \node (2) at (1,0) {};
\node (3) at (0,-1) {}; \node (4) at (1,-1) {};
\draw (1) edge[->] (2); \draw (3) edge[->] (4);
\end{tikzpicture}
\caption{} \label{fig:NonIdent6}
\end{subfigure}
\hspace{2.5em}
\begin{subfigure}[b]{0.15\textwidth}
\begin{tikzpicture}[every node/.style={draw, circle, inner sep=3pt}, scale=1.7, every edge/.style={draw, >=Latex}]
\node (1) at (0,0) {}; \node (2) at (1,0) {};
\node (3) at (0,-1) {}; \node (4) at (1,-1) {};
\draw (1) edge[->] (2);
\end{tikzpicture}
\caption{} \label{fig:NonIdent7}
\end{subfigure}
\caption{The seven fundamental types of non-identifiable Lyapunov DAG models on four vertices. The classes (\subref{fig:NonIdent4})--(\subref{fig:NonIdent7}) are trivial in that they consist of graphs whose connected components are cliques. The other equivalence classes are made up as follows: (\subref{fig:NonIdent1}) Twelve classes with two DAGs each; (\subref{fig:NonIdent2}) six classes with two DAGs each; and (\subref{fig:NonIdent3}) six classes with two DAGs each.}
\label{fig:NonIdent}
\end{figure}

\begin{theorem}
\label{thm:struct-id-forbidden-minors}
A DAG $\cg = (V, E)$ is structurally identifiable within the class of DAGs if and only if for each non-loop edge $i \to j \in E$ there exist $k, l \in V$ such that $i, j, k, l$ are all distinct and $\cg[i,j,k,l]$ is not isomorphic to any of the graphs in \Cref{fig:NonIdent}.
\end{theorem}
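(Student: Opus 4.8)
The plan is to deduce \Cref{thm:struct-id-forbidden-minors} from the enumeration of the seven non-identifiable four-node DAG models in \Cref{fig:NonIdent} by repeated appeal to \Cref{thm:struct_id_induced_subgraph}, handling the two implications separately and both by contraposition. Call a non-loop edge $i\to j$ of $\cg$ \emph{bad} if $\cg[\{i,j,k,l\}]$ is isomorphic to one of the graphs of \Cref{fig:NonIdent} for \emph{every} choice of $k,l$ making $i,j,k,l$ pairwise distinct; the theorem then asserts that $\cg$ is structurally identifiable if and only if it has no bad edge. (For $|V|\le 3$ no such $k,l$ exist and the claim reduces instead to the $n\le 3$ classification recalled just before the theorem, so I assume $|V|\ge 4$.)

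For ``no bad edge $\Rightarrow$ identifiable'' I argue the contrapositive. If $\cg$ is not identifiable, there is a DAG $\cg'\neq\cg$ with $\cm_{\cg'}=\cm_\cg$; by \Cref{thm:struct_id_induced_subgraph} the two graphs have the same skeleton and model-equivalent four-node induced subgraphs, and since $\cg\neq\cg'$ some adjacent pair is oriented $i\to j$ in $\cg$ but $j\to i$ in $\cg'$. For any admissible $k,l$ the DAGs $\cg[\{i,j,k,l\}]$ and $\cg'[\{i,j,k,l\}]$ differ --- they disagree on the edge $i\edge j$ --- yet are model equivalent, so $\cg[\{i,j,k,l\}]$ is a non-identifiable four-node DAG model, hence isomorphic to one of the seven graphs. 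Thus $i\to j$ is bad.

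The reverse direction, ``bad edge $\Rightarrow$ not identifiable'', is the substantial one: from a bad edge we must produce a DAG $\cg'\neq\cg$ with $\cm_{\cg'}=\cm_\cg$, and by \Cref{thm:struct_id_induced_subgraph} it suffices to find an edge $a\to b$ of $\cg$ whose reversal (i) keeps the graph acyclic and (ii) turns every $\cg[\{a,b,k,l\}]$ into a model-equivalent four-node DAG. I would first prove a rigidity lemma for a bad edge $i\to j$: if $j$ is reachable from $i$ without using $i\to j$, then already $i\to k\to j$ for some $k$. Indeed, a shortest alternative directed $i$-to-$j$ path of length $\ge 3$, with first internal vertex $k$ and last internal vertex $m$, forces (by minimality and acyclicity) the non-edges $i\edge m$ and $k\edge j$, so $\cg[\{i,j,k,m\}]$ has skeleton a $4$-cycle (length $3$) or a path $P_4$ (length $\ge 4$); neither occurs as the skeleton of any graph in \Cref{fig:NonIdent}, contradicting badness. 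Combining this with similar propagation arguments --- an unshielded collider or a transitive triangle running through the bad edge confines the relevant four-node subgraph to one of the types (\subref{fig:NonIdent1})--(\subref{fig:NonIdent5}), and iterating these constraints over all four-subsets containing $i$ and $j$ --- pins the weakly connected component of $\{i,j\}$ down to a short list of rigid shapes, each a ``globalization'' of one of the seven patterns: essentially a transitive tournament, a clique plus isolated part, a ``$K_4$-minus-edge'' or triangle-plus-pendant configuration, or a $K_2$-component. In each such shape there is a visible edge $a\to b$ whose reversal is acyclic and lands in the same Lyapunov class --- a covering edge of the clique part (using that every orientation of a complete graph has model $\PD_\nrnodes$, cf.\ \Cref{prop:MissingEdge}), and otherwise the distinguished reversible edge singled out by the two-element classes of (\subref{fig:NonIdent1})--(\subref{fig:NonIdent3}). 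Taking $\cg'$ to be $\cg$ with that edge reversed, \Cref{thm:struct_id_induced_subgraph} yields $\cm_{\cg'}=\cm_\cg$ and $\cg'\neq\cg$.

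The main obstacle is precisely this structural case analysis: proving that a bad edge truly forces its component into one of the rigid shapes, and locating the edge to reverse in the cases where the bad edge $i\to j$ itself is \emph{not} reversible --- for instance the pendant edge in type (\subref{fig:NonIdent1}), or a non-covering edge of a transitive tournament, where reversing $i\to j$ closes a directed cycle through the length-$2$ shortcut $i\to k\to j$. In such cases one descends from $i\to j$ along a neighboring edge of the transitive triangle it spans and repeats, and one must then verify that the finally chosen edge behaves admissibly across \emph{all} four-node subsets simultaneously; this bookkeeping is the bulk of the proof, and a transformational characterization of Lyapunov model equivalence via structured edge reversals, once available, would organize it more transparently.
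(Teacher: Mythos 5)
Your first half is fine and in fact takes a genuinely different route from the paper for that direction: you deduce ``not identifiable $\Rightarrow$ some edge is bad'' directly from \cref{thm:struct_id_induced_subgraph} plus the enumeration in \cref{fig:NonIdent}, whereas the paper channels \emph{both} implications through super-covered edges: identifiability is equivalent to the absence of a super-covered edge (\cref{theorem:super covered edge flip}, \cref{prop: edge flip}), an edge is super-covered in $\cg$ if and only if it is super-covered in every induced subgraph $\cg[i,j,k,l]$ (\cref{lemma:small-to-big-super-covered}), and \cref{fig:NonIdent} is precisely the list of four-node DAGs possessing a super-covered edge. Your argument for this half is correct and arguably cleaner.

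The other implication --- a bad edge forces non-identifiability --- is where your proposal has a genuine gap, and the structural claim you rest it on is false. You assert that iterating the four-node constraints ``pins the weakly connected component of $\{i,j\}$ down to a short list of rigid shapes'' (transitive tournament, clique plus isolated part, $K_4$ minus an edge, triangle plus pendant, a $K_2$-component). The six-node DAG $\cg_1$ of \cref{figure:flipsupercovered} refutes this: its edge $2\to 3$ is super-covered, hence by \cref{lemma:small-to-big-super-covered} it is super-covered in every $\cg_1[2,3,k,l]$, so each such subgraph lies in \cref{fig:NonIdent} and $2\to 3$ is bad in your sense; yet the weakly connected component of $\{2,3\}$ is the whole graph, which matches none of your shapes ($1$ and $6$ are non-adjacent, $4$ and $5$ are non-adjacent, and $6$ hangs off only through the common child $4$). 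The point is that badness constrains only the closed neighborhood of the edge, not its component: outside $\pa(a)\cup\{a,b\}\cup\ch(b)$ the graph may contain arbitrary vertices that are marginally independent of $\{a,b\}$ but still weakly connected to it. The correct formalization of this locality is exactly \cref{defn:super-covered-edge} together with \cref{lemma:small-to-big-super-covered}, and the ``bookkeeping'' you defer --- finding a reversible edge when $i\to j$ itself is not one, and checking admissibility across all four-subsets simultaneously --- is precisely what \cref{prop: edge flip} and \cref{lemma:small-to-big-super-covered} already deliver. Since the transformational characterization you wish for is available as \cref{theorem:super covered edge flip}, the argument you should give is the paper's short one: if every edge of $\cg$ admits some $k,l$ with $\cg[i,j,k,l]$ outside the list, then no edge is super-covered in all of its four-node subgraphs, hence none is super-covered in $\cg$, hence no flip is available and the equivalence class is a singleton; conversely a failure of the condition is tied to the presence of a super-covered edge, whose flip produces a distinct model-equivalent DAG. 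As written, your second half is an unfinished case analysis built on a classification that does not hold.
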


\begin{corollary}
It is decidable in polynomial time whether a DAG is structurally identifiable within the class of Lyapunov DAG models. It is also decidable in polynomial time whether two DAGs are model equivalent.
\end{corollary}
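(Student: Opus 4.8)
The plan is to read both algorithms directly off \Cref{thm:struct_id_induced_subgraph,thm:struct-id-forbidden-minors}: their content is precisely that a \emph{global} question about the semialgebraic sets $\cm_\cg \subseteq \PD_\nrnodes$ reduces to a polynomial number of queries about induced subgraphs on four vertices, and each such query can be answered in constant time by consulting the finite classification summarised in \Cref{fig:NonIdent}. The only genuinely expensive piece of work --- deciding model equivalence among the (finitely many) DAGs on four labelled vertices --- is independent of the input and is precomputed once and stored as a constant-size table; everything that remains is bookkeeping.

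\emph{Model equivalence.} Given DAGs $\cg_1$ and $\cg_2$ on a vertex set $V$ with $|V| = \nrnodes$, first test in $O(\nrnodes^2)$ time whether they have the same skeleton; if not, report ``not equivalent''. Otherwise, by \Cref{thm:struct_id_induced_subgraph}, $\cg_1$ and $\cg_2$ are model equivalent if and only if $\cg_1[K]$ and $\cg_2[K]$ are model equivalent for every $K \subseteq V$ with $|K| = 4$. There are $\binom{\nrnodes}{4} = O(\nrnodes^4)$ such subsets; for each one, $\cg_1[K]$ and $\cg_2[K]$ are DAGs on four labelled vertices sharing a skeleton, so deciding whether the two orientations are model equivalent is a table lookup in $O(1)$ time using the classification underlying \Cref{fig:NonIdent} (which is established by a finite sequence of computations of the kind carried out in \Cref{eg:123}). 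This gives an $O(\nrnodes^4)$ decision procedure. For $\nrnodes \le 4$ the reduction is vacuous, but the answer is then read off the same stored classification directly.

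\emph{Structural identifiability.} Given a DAG $\cg = (V, E)$, \Cref{thm:struct-id-forbidden-minors} says $\cg$ is structurally identifiable within the class of DAGs if and only if every non-loop edge $i \to j \in E$ admits two further vertices $k, l$, with $i,j,k,l$ pairwise distinct, such that $\cg[\{i,j,k,l\}]$ is not isomorphic to any of the seven graphs in \Cref{fig:NonIdent}. Iterating over the $O(\nrnodes^2)$ non-loop edges, and for each over the $O(\nrnodes^2)$ pairs $\{k,l\} \subseteq V \setminus \{i,j\}$, and checking in constant time whether the resulting four-vertex induced subgraph is isomorphic to one of seven fixed graphs, yields an $O(\nrnodes^4)$ algorithm; the cases $\nrnodes \le 3$, where no admissible $(k,l)$ can exist, are again covered by the stored classification. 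Since all elementary operations involved --- comparing skeletons, extracting a four-vertex induced subgraph, testing isomorphism of four-vertex graphs --- are clearly polynomial, there is no real obstacle: the corollary is essentially a restatement of \Cref{thm:struct_id_induced_subgraph,thm:struct-id-forbidden-minors}, the only subtlety being to make sure the small base cases are handled by the precomputed table rather than by the (then vacuous) reductions.
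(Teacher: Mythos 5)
Your proposal is correct and follows exactly the argument the paper intends: the corollary is stated without a separate proof precisely because it is read off \cref{thm:struct_id_induced_subgraph,thm:struct-id-forbidden-minors} by enumerating the $O(\nrnodes^4)$ four-vertex induced subgraphs and consulting the finite, precomputed classification of \cref{fig:NonIdent}. Your explicit $O(\nrnodes^4)$ bookkeeping and the remark about handling $\nrnodes\le 3$ via the stored table are consistent with (and slightly more careful than) what the paper leaves implicit.
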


We also provide a \emph{transformational characterization} of model equivalence in the Lyapunov setting reminiscent of Chickering's ``covered edge flips''~\cite{chickering1995transformational}.

\begin{definition}\label{defn:super-covered-edge}
Let $\cg = (V, E)$ be a simple DAG. Then an edge $i \to j \in E$ is called \emph{super-covered} if it satisfies the following conditions:
\begin{itemize}
\item $\ch(i) = \ch(j) \cup \{j\}$ and $\pa(i) \cup \{i\} = \pa(j)$,
\item for each $k \in \pa(j)$ and $l \in \ch(i)$ it holds that $k \to l \in E$, and
\item for all $k \in V \setminus \{i, j\}$ either $k \in \neighbors(i) \cap \neighbors(j)$ or $k \indep \{i, j\}$.
\end{itemize}
\end{definition}

The marginal independence $k \indep \{i,j\}$ is equivalent, by \cite[Proposition~2.3]{hansen2025trek}, to the non-existence of treks from $k$ to either $i$ or~$j$. The definition is therefore entirely graphical.

\begin{theorem}\label{theorem:super covered edge flip}
Let $\cg_1$ and $\cg_2$ be two simple DAGs. Then $\cm_{\cg_1}=\cm_{\cg_2}$ if and only if there exists a sequence of super-covered edge flips that transforms $\cg_1$ to $\cg_2$. Moreover, if $\cg_1$ and $\cg_2$ are model equivalent (hence have the same skeleton) and $\delta = |E(\cg_1) \setminus E(\cg_2)|$ is the number of differently oriented edges between these two graphs, then it takes only $\delta$ super-covered edge flips to transform $\cg_1$ into $\cg_2$.
\end{theorem}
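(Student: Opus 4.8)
The plan is to transplant the architecture of Chickering's proof \cite{chickering1995transformational}, with \Cref{thm:struct_id_induced_subgraph} and the classification in \Cref{fig:NonIdent} playing the role that the v-structure characterization plays for Bayesian networks. The argument has two halves: a \emph{soundness} lemma, that a single super-covered edge flip preserves the model, and an \emph{existence} lemma, that any two model-equivalent DAGs admit a super-covered flip strictly reducing the number of differently oriented edges. Throughout I would use three structural consequences of $\cm_{\cg_1}=\cm_{\cg_2}$: the two DAGs have the same skeleton; they have the same marginal independences, hence the same trek-reachability relation; and, by \Cref{thm:struct_id_induced_subgraph} together with the completeness of \Cref{fig:NonIdent}, every induced subgraph on at most four vertices is model equivalent in $\cg_1$ and $\cg_2$, so any such subgraph that is not of one of the seven listed types is pinned down exactly (and likewise for the three-vertex subgraphs, almost all of which are identifiable).

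\textbf{Soundness.} First I would show: if $i \to j$ is super-covered in a simple DAG $\cg$ (\Cref{defn:super-covered-edge}) and $\cg'$ is obtained by reversing it, then $\cg'$ is a simple DAG with $\cm_\cg = \cm_{\cg'}$. Acyclicity is automatic, since the first super-covered condition makes $i\to j$ a covered edge and reversing a covered edge never creates a directed cycle. For model equivalence I apply \Cref{thm:struct_id_induced_subgraph}: the skeleton is unchanged, and for a four-set $K$ with $\{i,j\}\not\subseteq K$ one has $\cg[K]=\cg'[K]$ verbatim, while for $K=\{i,j,k,l\}$ the super-covered conditions force each of $k,l$ to be either a common parent of $\{i,j\}$, a common child of $\{i,j\}$, or a vertex with no trek to $i$ or to $j$; running through the resulting handful of shapes one checks that $\cg[K]$ is always either a graph whose connected components are cliques or one of the types (\subref{fig:NonIdent1})--(\subref{fig:NonIdent3}), in each of which $i\to j$ is exactly an edge whose reversal stays in the same model equivalence class. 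Transitivity of equality then yields the ``if'' direction of the theorem.

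\textbf{Existence.} The ``only if'' direction together with the bound is proved by induction on $\delta = |E(\cg_1)\setminus E(\cg_2)|$; the base case $\delta=0$ forces $\cg_1=\cg_2$ because model equivalence implies the same skeleton. For $\delta>0$, note that Lyapunov model equivalence implies Markov equivalence (the two DAGs have the same skeleton and the same immoralities), so Chickering's lemma \cite{chickering1995transformational} supplies an edge $i\to j\in E(\cg_1)$ with $j\to i\in E(\cg_2)$ that is \emph{covered} in $\cg_1$. The crux is upgrading ``covered'' to ``super-covered''. I would argue contrapositively: if $\ch_{\cg_1}(i)\neq\ch_{\cg_1}(j)\cup\{j\}$, or some $k\in\pa_{\cg_1}(j)$ fails to point to some $l\in\ch_{\cg_1}(i)$, or some vertex violates the neighbor/independence dichotomy, then in each case I would exhibit an explicit induced subgraph on three or four vertices containing $i$ and $j$ that is \emph{not} of any type in \Cref{fig:NonIdent} and whose orientation of the edge $i\mathbin{\edge}j$ differs in $\cg_1$ and $\cg_2$ --- contradicting that $\cg_1$ and $\cg_2$ induce there the same (hence, being identifiable, identical) subgraph. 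With $i\to j$ established super-covered, reversing it produces $\cg_1'$ model equivalent to $\cg_1$ by the soundness lemma, hence to $\cg_2$, with $|E(\cg_1')\setminus E(\cg_2)|=\delta-1$; the induction hypothesis supplies $\delta-1$ further flips, and because each flip reorients only a single still-misoriented edge, exactly $\delta$ flips are used in total.

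\textbf{Main obstacle.} The delicate part is the ``covered $\Rightarrow$ super-covered'' upgrade inside the existence lemma. Chickering's classical argument delivers only a covered edge, whereas the extra super-covered conditions encode precisely the additional rigidity that separates Lyapunov equivalence from Markov equivalence; extracting them calls for a case-complete audit in which every possible failure of a super-covered condition is matched with a small witnessing induced subgraph whose Lyapunov model is identifiable. Making this enumeration exhaustive --- in particular the trek-reachability clause, where the witnessing subgraph may have to reach a vertex several steps away from $\{i,j\}$ --- is the main bookkeeping burden of the proof.
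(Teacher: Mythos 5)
Your architecture inverts the paper's logical dependencies in a way that makes the soundness half circular. You prove that a super-covered flip preserves the model by invoking \cref{thm:struct_id_induced_subgraph} (same skeleton plus equivalent $4$-node induced subgraphs implies model equivalence). But in the paper that sufficiency direction is itself a consequence of \cref{thm:4-to-infinity}, whose proof explicitly uses the soundness of super-covered flips (\cref{prop: edge flip}) to conclude $\cm_{\cg_1}=\cm_{\cg_2}$ after the sequence of flips. So you cannot use \cref{thm:struct_id_induced_subgraph} as a black box here; soundness needs an independent argument. The paper supplies one that is entirely absent from your proposal: decompose $\cm_\cg$ as an intersection of almost-complete models (\cref{lemma:model-via-almost-complete-intersection}), and for almost-complete DAGs either strip off a full sink clique or exhibit an explicit non-zero kernel vector of $A_{\cg_1\cup\cg_2}(\Sigma)$ built from determinants of submatrices of $\Sigma$, which shows the union graph is not parameter identifiable and forces $\cm_{\cg_1}=\cm_{\cg}=\cm_{\cg_2}$ by a dimension count (\cref{lemma: almost complete}, \cref{lemma:NonParam}). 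Even your $4$-node base case --- that reversing the designated edge in types (\subref{fig:NonIdent1})--(\subref{fig:NonIdent3}) stays in the same model class --- is asserted, not proved, and it is exactly the content that the kernel computation delivers.

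The existence half also has a gap at the step you correctly identify as delicate. Your contrapositive plan --- if some super-covered condition fails for the covered edge $i\to j$, exhibit an \emph{identifiable} $3$- or $4$-node induced subgraph containing $i,j$ --- cannot succeed in general: the witnessing subgraph in which $i\to j$ fails to be super-covered may itself be non-identifiable, namely of type (\subref{fig:NonIdent4}) or (\subref{fig:NonIdent5}) with $i\to j$ playing the role of a covered-but-not-super-covered edge (e.g.\ $1\to 3$ in a complete triangle). In those cases no identifiable witness exists and the contradiction must come from elsewhere. The paper resolves this by choosing $i\to j$ as the $\prec$-minimal element of $E(\cg_1)\setminus E(\cg_2)$ under Chickering's Find-Edge ordering: minimality forces the $\prec$-smaller edges of the offending subgraph to be oriented identically in $\cg_2$, whence reversing $i\to j$ creates a directed cycle in $\cg_2$. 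An arbitrary covered edge from Chickering's lemma does not come with this leverage, so your enumeration of failure modes cannot be closed as described.
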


\begin{example}
Let $\cg_1$ and $\cg_3$ be two simple DAGs as shown in \cref{figure:flipsupercovered}. Computing the corresponding $f^{ij}_{\cg_1}$ and $f^{ij}_{\cg_3}$ gives us that $\cm_{\cg_1}=\cm_{\cg_3}.$ Now, observe that the edge $2\rightarrow 3$ is super-covered in $\cg_1$ as $\pa(2)\cup \{2\}=\pa(3)=\{1,2\}$, $\ch(2)=\ch(3)\cup \{3\}=\{ 4,5 \}$, and every parent of $3$ is a parent of every child of $2$; the vertex $6$, which is not a neighbor of $\{2,3\}$, is marginally independent of $\{2,3\}$. Thus, by \cref{theorem:super covered edge flip}, we can flip the edge $2\rightarrow 3$ to reach $\cg_2$ without changing the model. In a similar way, it can be seen that the edge $1\rightarrow 3$ is now super-covered in $\cg_2$, although it was not super-covered in $\cg_1$. Flipping $1\rightarrow 3$ allows us to transform $\cg_2$ into $\cg_3$, and also keeps the model intact.

\begin{figure}
\begin{tikzpicture}[scale=0.9, every node/.style={draw, circle, inner sep=2pt}, every edge/.style={draw, >=Latex}]
  \node (2) at (0,1)  {$2$};
  \node (4) at (-1.5,-1) {$4$};
  \node (1) at (1,0) {$1$};
  \node (3) at (0,-1) {$3$};
  \node (6) at (-2.5,0) {$6$};
  \node (5) at (-1.5,1) {$5$};
  \draw (1) edge[->,black] (2) ;
  \draw (1) edge[->,black] (5) ;
  \draw (1) edge[->,black] (4) ;
  \draw (2) edge[->,black] (5) ;
  \draw (3) edge[->,black] (5) ;
  \draw (6) edge[->,black] (4) ;
  \draw (1) edge[->,black] (3) ;
  \draw (2) edge[->,black] (4) ;
  \draw (3) edge[->,black] (4) ;
  \draw (2) edge[->,red,thick] (3) ;
\node[draw=none] () at (-.55,-2) {\Large$\cg_1$};
\node[draw=none] () at (1.9,0) {\large$\longrightarrow$};

\node (12) at (2.8,0)  {$6$};
\node (11) at (3.8,1)  {$5$};
\node (8) at (5.3,1)  {$2$};
\node (10) at (3.8,-1) {$4$};
\node (7) at (6.3,0) {$1$};
\node (9) at (5.3,-1) {$3$};
  \draw (7) edge[->,black] (8) ;
  \draw (7) edge[->,blue,thick] (9) ;
  \draw (7) edge[->,black] (10) ;
  \draw (7) edge[->,black] (11) ;
  \draw (9) edge[->,red,thick] (8) ;
  \draw (8) edge[->,black] (10) ;
  \draw (8) edge[->,black] (11) ;
  \draw (9) edge[->,black] (10) ;
  \draw (9) edge[->,black] (11) ;
  \draw (12) edge[->,black] (10) ;
\node[draw=none] () at (4.7,-2) {\Large$\cg_2$};
\node[draw=none] () at (7.25,0) {\large$\longrightarrow$};

  \node (18) at (8.3,0)  {$6$};
  \node (17) at (9.3,1)  {$5$};
  \node (14) at (10.8,1)  {$2$};
  \node (16) at (9.3,-1) {$4$};
  \node (13) at (11.8,0) {$1$};
  \node (15) at (10.8,-1) {$3$};
  \draw (13) edge[->,black] (14) ;
  \draw (15) edge[->,blue,thick] (13) ;
  \draw (13) edge[->,black] (16) ;
  \draw (13) edge[->,black] (17) ;
  \draw (15) edge[->,black] (14) ;
  \draw (14) edge[->,black] (16) ;
  \draw (14) edge[->,black] (17) ;
  \draw (15) edge[->,black] (16) ;
  \draw (15) edge[->,black] (17) ;
  \draw (18) edge[->,black] (16) ;
\node[draw=none] () at (10.2,-2) {\Large$\cg_3$};
\end{tikzpicture}
\caption{Connecting model equivalent graphs via two super-covered edge flips.}
\label{figure:flipsupercovered}
\end{figure}
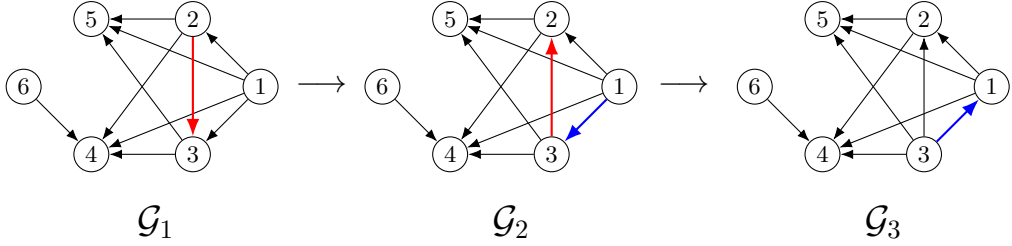
\end{example}

Recall that in Chickering's transformational characterization of model equivalence \cite{chickering1995transformational} an edge $i \to j \in E$ is \emph{covered} in $\cg$ if $\pa(i) \cup \{i\} = \pa(j)$. Thus any super-covered edge is also covered in the classical sense which leads to the following corollary.

\begin{corollary}
\label{cor:lyap-refines-bayes}
The model equivalence classes obtained for GCLMs are a refinement of the Markov equivalence classes of Bayesian network models. In other words, if two DAGs are not Markov-equivalent (as Bayesian networks) then they are not model equivalent as GCLMs.
\end{corollary}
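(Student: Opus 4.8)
The plan is to derive the corollary directly from the transformational characterization in \Cref{theorem:super covered edge flip}, combined with Chickering's analogous characterization of Markov equivalence for Bayesian networks. It suffices to prove the contrapositive in the ``in other words'' reformulation, namely that $\cm_{\cg_1} = \cm_{\cg_2}$ implies that $\cg_1$ and $\cg_2$ are Markov-equivalent as Bayesian networks. This is precisely the statement that every Lyapunov model equivalence class is contained in a single Markov equivalence class, which is what ``refinement'' means.

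First I would take two DAGs $\cg_1, \cg_2$ with $\cm_{\cg_1} = \cm_{\cg_2}$ and apply \Cref{theorem:super covered edge flip} to obtain a sequence of simple DAGs $\cg_1 = \cd_0, \cd_1, \dots, \cd_\delta = \cg_2$ in which each $\cd_t$ arises from $\cd_{t-1}$ by reversing a super-covered edge. Next I would observe that, by the first bullet of \Cref{defn:super-covered-edge}, a super-covered edge $i \to j$ satisfies $\pa(i) \cup \{i\} = \pa(j)$, which is exactly Chickering's definition of a \emph{covered} edge in \cite{chickering1995transformational} (a point already noted in the paragraph preceding the corollary). Finally I would invoke the classical fact that reversing a covered edge in a DAG yields a DAG lying in the same Markov equivalence class \cite{chickering1995transformational}. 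Applying this at each of the $\delta$ steps shows that $\cd_0, \dots, \cd_\delta$ all encode the same d-separations, so $\cg_1$ and $\cg_2$ are Markov-equivalent, as required.

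There is essentially no analytic obstacle in the corollary itself: the substantive content is carried out in \Cref{theorem:super covered edge flip}, and what remains is a one-line deduction. The two points that warrant care are (a) that one uses the ``easy'' direction of Chickering's theorem — a single covered-edge reversal preserves the Markov equivalence class — rather than its harder converse, and (b) that every graph $\cd_t$ in the sequence is a genuine simple DAG so that this fact applies; both hold because super-covered edge flips are defined on, and produce, simple DAGs, and \Cref{theorem:super covered edge flip} delivers the connecting sequence within that class. One may additionally remark that the refinement is \emph{strict}: the forward and backward $3$-paths of \Cref{eg:123} are Markov-equivalent (same skeleton, no v-structures) yet not model equivalent as GCLMs, so Lyapunov models genuinely separate causal structures that Bayesian networks cannot.
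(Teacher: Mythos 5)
Your proof is correct and follows exactly the route the paper intends: \cref{theorem:super covered edge flip} connects model equivalent DAGs by super-covered edge flips, each super-covered edge is covered in Chickering's sense by the first bullet of \cref{defn:super-covered-edge}, and covered-edge reversals preserve Markov equivalence. This is precisely the argument sketched in the paragraph preceding the corollary, so there is nothing to add.
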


Another important feature of \cref{defn:super-covered-edge} is that the neighborhood of a super-covered edge must be quite dense. For instance, it is easy to see that the only polytree which may contain a super-covered edge is the 2-clique $1 \to 2$.

\begin{corollary}
Let $\ct = (V, E)$ be a polytree on $|V| \ge 3$ nodes. Then $\ct$ is structurally identifiable within the class of all Lyapunov DAG models.
\end{corollary}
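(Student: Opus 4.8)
The plan is to derive the corollary from the transformational characterization \cref{theorem:super covered edge flip}, via the single observation that a polytree on $|V|\ge 3$ vertices has no super-covered edge. Granting this, let $\cg$ be any DAG with $\cm_{\cg}=\cm_{\ct}$. Every DAG is simple, since a pair of edges $i\to j$ and $j\to i$ would be a directed cycle of length two; hence \cref{theorem:super covered edge flip} applies and yields a sequence of super-covered edge flips transforming $\ct$ into $\cg$. The first flip of such a sequence is performed on an edge that is super-covered \emph{in $\ct$}, of which there are none, so the sequence is empty and $\cg=\ct$. Therefore the model equivalence class of $\ct$ meets the class of all DAGs over $V$ only in $\ct$ itself, which is exactly what it means for $\ct$ to be structurally identifiable within the class of Lyapunov DAG models.

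It remains to show that $\ct$ has no super-covered edge; this also makes precise the remark following \cref{cor:lyap-refines-bayes}. Suppose for contradiction that $i\to j\in E$ is super-covered. The first bullet of \cref{defn:super-covered-edge} gives $\pa(i)\cup\{i\}=\pa(j)$ and $\ch(i)=\ch(j)\cup\{j\}$, so in particular $\pa(i)\subseteq\pa(j)$ and $\ch(j)\subseteq\ch(i)$. If there were a vertex $k\in\pa(i)$, then $k\to i$, $k\to j$, and $i\to j$ would all be edges, so the skeleton of $\ct$ would contain the triangle on $\{i,j,k\}$ --- impossible, as the skeleton of a polytree is a forest and hence triangle-free. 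By the same reasoning any $l\in\ch(j)$ produces a triangle on $\{i,j,l\}$. Hence $\pa(i)=\ch(j)=\emptyset$, which forces $\pa(j)=\{i\}$ and $\ch(i)=\{j\}$, so that $\neighbors(i)=\{j\}$ and $\neighbors(j)=\{i\}$. Thus $\{i,j\}$ is a connected component of $\ct$; but $\ct$ is connected with $|V|\ge 3$, a contradiction.

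Once these two steps are combined, the proof is essentially done and I do not expect a genuine obstacle: the only point that needs attention is the elementary fact that forests are triangle-free, which is precisely what collapses any super-covered edge in $\ct$ to an isolated $2$-clique. One could in principle argue instead through the forbidden-subgraph criterion \cref{thm:struct-id-forbidden-minors}, verifying that for every edge $i\to j$ one can choose $k,l$ so that $\ct[i,j,k,l]$ is none of the graphs in \cref{fig:NonIdent}; but that criterion requires $|V|\ge 4$ even to state, and one would then need to dispatch $|V|=3$ separately, whereas \cref{theorem:super covered edge flip} covers all $|V|\ge 3$ at once, so I would take the transformational route.
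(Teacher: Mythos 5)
Your proposal is correct and matches the paper's (implicit) argument: the paper justifies this corollary by the observation that the only polytree containing a super-covered edge is the isolated $2$-clique $1\to 2$, and then invokes \cref{theorem:super covered edge flip} exactly as you do. Your triangle-freeness argument is a clean way to make the paper's ``it is easy to see'' remark precise, and your handling of the empty flip sequence is the right way to conclude identifiability.
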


The precise connection between our characterization of model equivalence via induced subgraphs \cref{thm:struct_id_induced_subgraph} and the transformational characterization in \cref{theorem:super covered edge flip} may not be immediately obvious, however \cref{thm:struct-id-forbidden-minors} provides a clearer link. In particular, note that every graph in \cref{fig:NonIdent} contains at least one super-covered edge, and thus there is at least one other graph which is model equivalent to it. On the other hand, an edge $i \to j \in E$ is super-covered if and only if it is super-covered in the induced subgraph $\cg[i,j,k,l]$ for all $k, l \in V \setminus \{i, j\}$.

\section{Proofs}
\label{sec:proofs}

This section contains the proofs of our characterizations of model equivalence for Lyapunov DAG models via induced subgraphs (\cref{thm:struct_id_induced_subgraph}) and via edge flips (\cref{theorem:super covered edge flip}). While these results appear separate, their proofs are intertwined. After developing general tools in \cref{sec:Tools}, we first show that model equivalent graphs have the same skeleton and model equivalent subgraphs. Next, we show that flipping super-covered edges preserves the model. The proof of \cref{thm:struct_id_induced_subgraph} is finished by tying the existence of super-covered edges in a graph to their existence in induced subgraphs on four nodes. From there the completeness of super-covered edge flips follows by a graph-theoretic argument.

\subsection{Techniques for Proving Structural Identifiability Results}
\label{sec:Tools}

Lyapunov models are \emph{algebraic} subsets of the cone of positive definite matrices, i.e., they are defined by polynomial equations. This was shown in \cite[Corollary~5.4]{dettling2022identifiability} but also follows directly from \cref{prop:MissingEdge}. Moreover GCLMs are \emph{irreducible} as algebraic sets. This means that $\cm_\cg$ cannot be written as the union of two proper algebraic subsets. This follows from elementary topological considerations: since the set of stable matrices $M \in \rr^E$ is irreducible and the parametrization $M \mapsto \Sigma$ via~\eqref{eqn:BM} is continuous, the image $\cm_\cg$ must be irreducible as well. We make use of this property in two essential~ways:

\begin{proposition} \label{prop:Geom}
Let $\cm$ and $\cm'$ be two algebraic sets.
\begin{enumerate}
\item If $\cm \subseteq \cm'$ are both irreducible and have the same dimension then they are equal.
\item If $\cm$ is irreducible and not contained in $\cm'$ then $\dim(\cm \cap \cm') < \dim(\cm)$.
\end{enumerate}
\end{proposition}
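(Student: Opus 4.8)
The plan is to reduce both parts to a single foundational fact of dimension theory for algebraic sets: \emph{if $Y$ is a closed subset of an irreducible algebraic set $X$ and $Y \neq X$, then $\dim Y < \dim X$.} First I would recall the argument for this, using the characterization of $\dim X$ as the largest $d$ for which there is a chain $Z_0 \subsetneq Z_1 \subsetneq \dots \subsetneq Z_d$ of nonempty irreducible closed subsets of $X$ (equivalently, the Krull dimension of the coordinate ring of $X$). Decomposing $Y$ into its finitely many irreducible components $Y = Y_1 \cup \dots \cup Y_r$, the hypothesis $Y \subsetneq X$ forces every component $Y_i$ to be a proper irreducible closed subset of $X$; appending $X$ to a maximal chain of irreducible closed subsets inside $Y_i$ then produces a strictly longer such chain in $X$, so $\dim Y_i < \dim X$ for all $i$, and hence $\dim Y = \max_i \dim Y_i < \dim X$.

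Given this, part~(1) is immediate: since $\cm$ is an algebraic set contained in $\cm'$, it is a closed subset of $\cm'$; if $\cm \neq \cm'$ then applying the fact above with $X = \cm'$ and $Y = \cm$ gives $\dim \cm < \dim \cm'$, contradicting the assumption that the two dimensions agree. Hence $\cm = \cm'$. For part~(2), I would first note that $\cm \cap \cm'$ is again an algebraic set (the common zero locus of the two defining ideals), hence a closed subset of $\cm$, and it is a \emph{proper} subset of $\cm$ precisely because $\cm \not\subseteq \cm'$. Applying the fact with $X = \cm$ and $Y = \cm \cap \cm'$ then yields $\dim(\cm \cap \cm') < \dim(\cm)$. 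Observe that irreducibility of $\cm'$ plays no role in part~(2).

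I do not expect a genuine obstacle here; the content is classical and the argument is short. The only point that deserves a sentence of care is what ``dimension'' means for the sets $\cm_\cg$, which are a priori real (semialgebraic) subsets of the positive definite cone: one fixes this by taking $\dim$ to be the Krull dimension of the coordinate ring, equivalently by passing to Zariski closures, which affects neither irreducibility nor dimension. With that convention in place the three-line deductions above go through verbatim, the remaining work being routine bookkeeping with chains of irreducible closed subsets.
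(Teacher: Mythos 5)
Your proposal is correct and follows essentially the same route as the paper: both rest on the characterization of dimension as the maximal length of a chain of irreducible closed subsets, combined with the irreducible decomposition of an algebraic set, and both note that this notion of dimension agrees with the geometric one for the real algebraic sets at hand. Your packaging of the two parts into the single fact that a proper closed subset of an irreducible algebraic set has strictly smaller dimension is just a mild reorganization of the paper's argument, which instead deduces part~(2) from part~(1).
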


\begin{proof}
The first claim is a direct consequence of the definition of dimension of an algebraic set as the length of the longest chain of irreducible algebraic subsets. Such definition coincides with the usual notion of dimension for manifolds; cf.~\cite[Section~9.6]{CLO}.
The second assertion follows from the first one by the decomposition of an algebraic set into irreducible sets; cf.~\cite[Section~4.2]{sullivant2018algebraic}.
\end{proof}

Recall from \cite[Theorem~7.1]{dettling2022identifiability} that if $\cg$ is a simple graph then $\cm_{\cg}$ is globally identifiable. In this case $\dim(\cm_\cg) = |E(\cg)|$ is equal to the number of parameters in~$M$ which are not set to zero. This holds in particular for DAGs.
Non-simple graphs fail to be globally identifiable but may still satisfy a weaker condition known as \emph{generic (parameter) identifiability}; cf.~\cite[Section~8]{dettling2022identifiability}. This means that the parameter matrix $M$ can be uniquely identified for almost all distributions in the model. The subset where unique identifiability fails is a lower-dimensional algebraic set and referred to as the \emph{locus of non-identifiability}; cf.~\cite[Remark~3.2 \& Lemma~4.3]{dettling2022identifiability}.

The following lemmas show how parameter identifiability can be used to obtain both positive and negative structural identifiability results. %

\begin{lemma}
\label{lem:lyap_struct_id_from_param_id}
Let $\cg_1 = (V, E_1)$ and $\cg_2 = (V, E_2)$ be distinct simple graphs with common vertex set $V$. Suppose there exists a common extension $\cg = (V, E)$, i.e., $E_1 \cup E_2 \subseteq E$ which is generically parameter identifiable and such that $\cm_{\cg_1}$ and $\cm_{\cg_2}$ are not contained in the locus of non-identifiability. Then $\cm_{\cg_1}$ and $\cm_{\cg_2}$ are distinguishable.
\end{lemma}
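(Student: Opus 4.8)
The plan is to exploit the fact that each Lyapunov model $\cm_{\cg_i}$ sits inside the bigger model $\cm_\cg$, together with generic parameter identifiability of $\cg$, to show that the two smaller models cannot agree. Recall that by definition $\cm_{\cg_i}$ is the set of $\Sigma$ solving the Lyapunov equation for a stable $M \in \rr^{E_i} \subseteq \rr^E$, so $\cm_{\cg_1}, \cm_{\cg_2} \subseteq \cm_\cg$. Since $E_1 \ne E_2$, pick an edge $i \to j$ which lies in, say, $E_1 \setminus E_2$ (or vice versa). The idea is that a generic $\Sigma \in \cm_{\cg_1}$ is produced by a drift matrix $M$ whose $(j,i)$-entry is generically nonzero, whereas every $\Sigma \in \cm_{\cg_2}$ is produced by a drift matrix supported on $E_2$, so with $m_{ji} = 0$. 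If the unique parameter preimage were the same map on both, these would be incompatible.

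First I would use generic parameter identifiability of $\cg$: there is a proper algebraic subset $\cz \subsetneq \cm_\cg$ (the locus of non-identifiability) such that for every $\Sigma \in \cm_\cg \setminus \cz$ the stable matrix $M \in \rr^E$ solving \eqref{eq:Lyapunov} is unique; denote this rational map $\Sigma \mapsto M(\Sigma)$. By hypothesis, $\cm_{\cg_1} \not\subseteq \cz$ and $\cm_{\cg_2} \not\subseteq \cz$, so the map $M(\cdot)$ is defined on a dense open subset of each. Second, for $\Sigma \in \cm_{\cg_2} \setminus \cz$ the (unique) solution lies in $\rr^{E_2}$, so $M(\Sigma)_{ji} = 0$ identically on $\cm_{\cg_2} \setminus \cz$; by continuity (or by the identity theorem for the polynomial $\Sigma \mapsto M(\Sigma)_{ji} \cdot \det(A_{\cg}(\Sigma))$, cf.\ Cramer's rule in \cref{thm:LyapIdentifiability}), the coordinate function $M(\cdot)_{ji}$ vanishes on all of $\cm_{\cg_2}$. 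Third, suppose for contradiction that $\cm_{\cg_1} = \cm_{\cg_2}$. Then $M(\cdot)_{ji}$ also vanishes on $\cm_{\cg_1}$, meaning that for every $\Sigma \in \cm_{\cg_1} \setminus \cz$ the unique drift matrix in $\rr^E$ reproducing $\Sigma$ has a zero in position $(j,i)$. But the drift matrices ranging over the stable matrices in $\rr^{E_1}$ surject onto $\cm_{\cg_1}$ via \eqref{eq:Lyapunov}, and a generic such matrix has $m_{ji} \ne 0$ (the set $\{M \in \rr^{E_1} : m_{ji} = 0\}$ is a proper subspace, hence its image is a proper — in particular lower-dimensional, by irreducibility of the parametrization and \cref{prop:Geom} — algebraic subset of $\cm_{\cg_1}$). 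Since this image must, by uniqueness, coincide with $M(\Sigma)$ for $\Sigma$ outside $\cz$, we get a contradiction: $M(\cdot)_{ji}$ would be both identically zero and generically nonzero on $\cm_{\cg_1}$. Hence $\cm_{\cg_1} \ne \cm_{\cg_2}$, and since $\cm_{\cg_1}, \cm_{\cg_2}$ are irreducible, \cref{prop:Geom}(2) promotes this to $\dim(\cm_{\cg_1} \cap \cm_{\cg_2}) < \max(\dim \cm_{\cg_1}, \dim \cm_{\cg_2})$, i.e., the two models are distinguishable in the sense of \eqref{eqn:new-gen-id}.

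The step I expect to require the most care is reconciling the two descriptions of the generic drift matrix associated to a point of $\cm_{\cg_1}$: one coming from the $\cg_1$-parametrization (where $m_{ji}$ is a free, generically nonzero parameter) and one coming from the global $\cg$-identifiability map $M(\cdot)$ (where we have argued $m_{ji} = 0$). These must agree because on $\cm_\cg \setminus \cz$ the stable solution is unique, and a stable matrix in $\rr^{E_1}$ is in particular a stable matrix in $\rr^E$. Making this matching rigorous — in particular checking that the $\cg_1$-parametrization's generic image avoids $\cz$, which is exactly the hypothesis $\cm_{\cg_1} \not\subseteq \cz$ — is the crux; everything else is a routine application of irreducibility and dimension counting via \cref{prop:Geom}.
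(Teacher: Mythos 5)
Your proposal is correct and follows essentially the same route as the paper's proof: both use the unique identifiability of the drift matrix on the common extension $\cg$ outside its non-identifiability locus, the genericity of $m_{ji}\neq 0$ over $\cm_{\cg_1}$ for an edge $i\to j\in E_1\setminus E_2$, and the forced vanishing of that coordinate for any $\Sigma\in\cm_{\cg_2}$. The paper phrases this directly (exhibiting a $\Sigma\in\cm_{\cg_1}\setminus\cm_{\cg_2}$ via the missing-edge relation) rather than by contradiction, but the substance is identical.
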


\begin{proof}
Since $E_1 \neq E_2$, their symmetric difference is non-empty. By relabeling the graphs, we may assume that there is $i\to j \in E_1 \setminus E_2$. The assumptions and \cref{prop:Geom} imply that the locus of non-identifiability of $\cm_{\cg}$ intersects $\cm_{\cg_1} \subseteq \cm_{\cg}$ in a lower-dimensional subset. Hence, we may pick $\Sigma \in \cm_{\cg_1}$ whose parameter matrix $M \in \rr^E$ is identifiable and has $m_{ji} \not= 0$. But then $\Sigma$ violates the missing-edge relation for $i\to j \in E \setminus E_2$ which shows that $\Sigma \notin \cm_{\cg_2}$.
\end{proof}

\begin{lemma} \label{lemma:NonParam}
Let $\cg_1, \cg_2, \cg$ be graphs such that $E_1, E_2 \subseteq E$ and $|E \setminus E_i| = 1$. If $\cg$ is not parameter identifiable but $\cg_1, \cg_2$ are parameter identifiable then $\cg_1$ and $\cg_2$ are model equivalent.
\end{lemma}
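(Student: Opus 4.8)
The plan is to deduce the statement from a dimension count using \cref{prop:Geom}, together with the linear description of the fibres of the parametrization $M \mapsto \Sigma(M)$ provided by \cref{eq:Asigma}. First I would observe that since $E_1, E_2 \subseteq E$, every $\cg_i$-sparse drift matrix is also $\cg$-sparse, so $\cm_{\cg_1} \subseteq \cm_{\cg}$ and $\cm_{\cg_2} \subseteq \cm_{\cg}$, and that all three GCLMs are irreducible algebraic sets (as recalled at the beginning of \cref{sec:Tools}); also $|E_1| = |E_2| = |E| - 1$.

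The crucial step is the identity $\dim \cm_{\cg'} = |E'| - d_{\cg'}$ for every graph $\cg'$, where $d_{\cg'}$ denotes the dimension of the generic fibre of the parametrization $\phi_{\cg'}$ from the open (and dense in $\rr^{E'}$) set of stable $\cg'$-sparse matrices onto $\cm_{\cg'}$. For a fixed $\Sigma \in \cm_{\cg'}$, the fibre $\phi_{\cg'}^{-1}(\Sigma)$ is exactly the set of stable $M \in \rr^{E'}$ solving $A_{\cg'}(\Sigma) \vec(M) = -\vech(C)$, i.e., the intersection of an affine subspace with the open stable locus; hence it is empty, a single point, or positive-dimensional. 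Consequently $d_{\cg'} = 0$ precisely when $\cg'$ is parameter identifiable, and $d_{\cg'} \ge 1$ otherwise. Applying this to $\cg_1$ and $\cg_2$ gives $\dim \cm_{\cg_1} = |E| - 1 = \dim \cm_{\cg_2}$, and applying it to $\cg$ gives $\dim \cm_{\cg} \le |E| - 1$.

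Finally I would combine these facts: from $\cm_{\cg_1} \subseteq \cm_{\cg}$ we obtain $|E| - 1 = \dim \cm_{\cg_1} \le \dim \cm_{\cg} \le |E| - 1$, so $\dim \cm_{\cg} = \dim \cm_{\cg_1}$, and since $\cm_{\cg_1} \subseteq \cm_{\cg}$ with both sets irreducible, \cref{prop:Geom}(1) forces $\cm_{\cg_1} = \cm_{\cg}$; the same argument gives $\cm_{\cg_2} = \cm_{\cg}$, whence $\cm_{\cg_1} = \cm_{\cg_2}$. The main obstacle --- indeed the only place needing care --- is the implication ``$\cg$ not parameter identifiable $\Rightarrow d_{\cg} \ge 1$'': that failure of generic \emph{uniqueness} of the drift matrix upgrades to a \emph{positive-dimensional} family of admissible drift matrices, rather than just finitely many alternatives. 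This is exactly where the linearity of the system $A_{\cg}(\Sigma)\vec(M) = -\vech(C)$ is indispensable, since a subset of an open set defined by linear equations cannot be a finite set with more than one element. Everything else is routine bookkeeping with dimensions of irreducible algebraic sets.
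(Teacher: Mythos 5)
Your proposal is correct and follows essentially the same route as the paper: both arguments reduce to the dimension count $\dim\cm_{\cg_1}=\dim\cm_{\cg}=\dim\cm_{\cg_2}=|E|-1$ combined with \cref{prop:Geom}(1) applied to the inclusions $\cm_{\cg_i}\subseteq\cm_{\cg}$. The only difference is that you spell out why non-identifiability forces $\dim\cm_{\cg}<|E|$ (positive-dimensional affine fibres of $A_{\cg}(\Sigma)\vec(M)=-\vech(C)$), a step the paper leaves implicit by citing the locus-of-non-identifiability discussion from the preceding paragraph.
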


\begin{proof}
The model $\cm_\cg$ is not parameter identifiable which means $\dim(\cm_\cg) < |E|$. On~the other hand, it contains the two models $\cm_{\cg_1}$ and $\cm_{\cg_2}$ which, because they are identifiable, have dimension $|E_1| = |E_2| = |E|-1$. Hence,
\[
\dim(\cm_{\cg_1}) = \dim(\cm_\cg) = \dim(\cm_{\cg_2}).
\]
Now $\cm_{\cg_1} = \cm_{\cg} = \cm_{\cg_2}$ follows from \cref{prop:Geom}.
\end{proof}

\begin{example}
An example of a non-identifiable model is given by the non-simple graph
\begin{center}
\begin{tikzpicture}[every node/.style={draw, circle, inner sep=2pt}, scale=1.7, every edge/.style={draw, >=Latex}]
\node (1) at (0,0) {1}; \node (2) at (1,0) {2};
\node (3) at (0,-1) {3}; \node (4) at (1,-1) {4};
\draw (1) edge[->] (4); \draw (2) edge[->, bend left=30] (3); \draw (3) edge[->, bend left=30] (2);
\draw (2) edge[->] (4); \draw (3) edge[->] (4);
\end{tikzpicture}
\end{center}
This corresponds to type \cref{fig:NonIdent}~(\subref{fig:NonIdent1}) and was also discussed in \cite[Example~8.4]{dettling2022identifiability}. By removing one of the two edges between $2$ and $3$, we obtain simple graphs to which \cref{lemma:NonParam} applies. It shows that all three graphs have the same model which is simply described by the marginal independence $X_1 \indep (X_2, X_3)$.
\end{example}

Our last result in this subsection shows how model equivalence results for general graphs may be reduced to \emph{almost-complete graphs}, i.e., simple graphs with one missing edge.

\begin{lemma}
\label{lemma:model-via-almost-complete-intersection}
Let $\cg$ be a simple graph with completion~$\cg'$ and missing edge set $\tilde{E} = E(\cg') \setminus E(\cg)$. For each missing edge $e \in \tilde{E}$ let $\cg_e = \cg' \setminus e$ be the completion with that edge removed.
Then $\cm_\cg = \bigcap_{e \in \tilde{E}} \cm_{\cg_e}$.
\end{lemma}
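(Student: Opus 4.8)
The key observation is that a single completion, namely $\cg'$, simultaneously serves as a completion of $\cg$ and of every $\cg_e$, and that the missing-edge relation $f^{ij}_{\cg'}(\Sigma; C)$ from \cref{def: missing edge} depends only on $\cg'$ and the edge $i \to j$, not on the subgraph being completed. The plan is therefore to unwind both sides of the claimed equality through \cref{prop:MissingEdge} applied with the common completion $\cg'$.

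First I would record the elementary graph-theoretic facts. Since $\cg'$ is simple with $\binom{n+1}{2}$ edges and $e \in \tilde E$, the graph $\cg_e = \cg' \setminus e$ is again simple (deleting an edge cannot violate simplicity), and it has exactly $\binom{n+1}{2} - 1$ edges with $E(\cg_e) \subseteq E(\cg')$. Hence $\cg'$ is a completion of $\cg_e$ as well, and the set of edges added in this completion is $E(\cg') \setminus E(\cg_e) = \{e\}$, a singleton. So the only missing-edge relation of $\cg_e$ with respect to the completion $\cg'$ is $f^{e}_{\cg'}(\Sigma; C) = 0$.

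Next I would apply \cref{prop:MissingEdge} twice. Applied to $\cg$ with its completion $\cg'$, it gives: for $\Sigma \in \PD_n$, one has $\Sigma \in \cm_\cg$ if and only if $f^{e}_{\cg'}(\Sigma; C) = 0$ for every $e \in \tilde E = E(\cg') \setminus E(\cg)$. Applied to each $\cg_e$ (which is simple, by the previous paragraph) with completion $\cg'$, it gives: $\Sigma \in \cm_{\cg_e}$ if and only if $f^{e}_{\cg'}(\Sigma; C) = 0$. Intersecting over $e \in \tilde E$,
\[
  \Sigma \in \bigcap_{e \in \tilde E} \cm_{\cg_e}
  \iff f^{e}_{\cg'}(\Sigma; C) = 0 \text{ for all } e \in \tilde E
  \iff \Sigma \in \cm_\cg,
\]
which is the assertion. (If $\tilde E = \emptyset$, i.e.\ $\cg = \cg'$ is already complete, the empty intersection is $\PD_n$, matching $\cm_\cg = \PD_n$, so the statement holds trivially in that degenerate case.)

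\textbf{Expected difficulty.} There is no substantial obstacle here; the content is entirely bookkeeping around \cref{prop:MissingEdge}. The only points requiring care are verifying that each $\cg_e$ satisfies the hypotheses of \cref{prop:MissingEdge} (simplicity and the existence of a completion, both immediate) and noting that the relation $f^{e}_{\cg'}$ is literally the same polynomial whether we regard it as a missing-edge relation for $\cg$ or for $\cg_e$, since it is defined purely in terms of $A_{\cg'}(\Sigma)$ and the edge $e$.
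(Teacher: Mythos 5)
Your proof is correct and follows essentially the same route as the paper: observe that $\cg'$ is simultaneously a completion of $\cg$ and of each $\cg_e$ (the latter with single missing edge $e$), then apply \cref{prop:MissingEdge} on both sides to see that $\cm_\cg$ and $\bigcap_{e \in \tilde E} \cm_{\cg_e}$ are cut out by the identical set of equations $f^{e}_{\cg'}(\Sigma;C)=0$. The extra care you take with the degenerate case $\tilde E = \emptyset$ and with checking the hypotheses of \cref{prop:MissingEdge} is fine but not needed beyond what the paper already records.
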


\begin{proof}
Observe that $\cg'$ is a completion of each of the graphs~$\cg_e$. By \cref{prop:MissingEdge} the model $\cm_{\cg}$ is defined by the conditions $f_{\cg'}^{e}(\Sigma) = 0$ for all $e \in \tilde{E}$. Each of these equations defines one of the models $\cm_{\cg_e}$, again by \cref{prop:MissingEdge}. Hence $\cm_{\cg}$ and $\bigcap_{e \in \tilde{E}} \cm_{\cg_e}$ are defined by the same set of equations and must therefore be equal.
\end{proof}

\subsection{Distinguishing Models via Induced Subgraphs} \label{sec:Subgraphs}

In this section, we prove the necessity of the induced subgraph criterion in \cref{thm:struct_id_induced_subgraph}. If $\cg_1$ and $\cg_2$ are two model equivalent DAGs, then they must have the same skeleton and model equivalent induced subgraphs.

\begin{lemma}
\label{lemma:skel_struct_id}
If two simple graphs $\cg_1$ and $\cg_2$ have different skeletons then $\cm_{\cg_1} \neq \cm_{\cg_2}$.
\end{lemma}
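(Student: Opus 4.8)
The plan is to show that the skeleton of a simple graph $\cg$ is recoverable from the model $\cm_\cg$ — in fact from its dimension together with a one-parameter probe. Since $\cg$ is simple, Theorem~\ref{thm:LyapIdentifiability} gives $\dim(\cm_\cg) = |E(\cg)|$ (counting self-loops), so two graphs with different numbers of edges are immediately distinguished. The substance is therefore the case where $\cg_1$ and $\cg_2$ have the same number of edges but differ in which unordered pair $\{i,j\}$ carries an edge: say $i\edge j$ is in the skeleton of $\cg_1$ but not in that of $\cg_2$, while some other pair $\{p,q\}$ is in $\cg_2$ but not $\cg_1$. I want to produce a covariance matrix in $\cm_{\cg_1}$ that fails a missing-edge relation forcing it out of $\cm_{\cg_2}$.

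First I would fix a completion $\cg_2'$ of $\cg_2$; by Proposition~\ref{prop:MissingEdge}, membership in $\cm_{\cg_2}$ is equivalent to the vanishing of $f^{kl}_{\cg_2'}(\Sigma)$ for every edge $k\to l\in E(\cg_2')\setminus E(\cg_2)$. We may choose the completion so that it contains whichever of $i\to j$ or $j\to i$ is appropriate, hence one of these missing-edge relations, say $f^{ij}_{\cg_2'}$, is an obstruction that must vanish on all of $\cm_{\cg_2}$ but whose vanishing corresponds (via Cramer's rule, as in Proposition~\ref{prop:MissingEdge}) to the entry $m_{ji}$ of the identified drift matrix being zero. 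On the other side, pick a stable $M\in\rr^{E(\cg_1)}$ with $m_{ji}\neq 0$ (possible since $i\to j$ or $j\to i$ lies in $E(\cg_1)$) and let $\Sigma\in\cm_{\cg_1}$ be the corresponding solution of the Lyapunov equation. Then $\Sigma\in\cm_{\cg_1}$ but the drift recovered from $\Sigma$ through the completion $\cg_2'$ has a nonzero entry in a forbidden slot of $\cg_2$, so $f^{ij}_{\cg_2'}(\Sigma)\neq 0$ and $\Sigma\notin\cm_{\cg_2}$. One subtlety to handle carefully: the drift recovered via $\cg_2'$ must be reconciled with the drift $M$ we started from. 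Here the key point is that for a fixed positive definite $\Sigma$ the Lyapunov equation \eqref{eq:Lyapunov} has a unique stable solution $M$, and $A_{\cg_2'}(\Sigma)\vec(M')=-\vech(C)$ with $A_{\cg_2'}(\Sigma)$ invertible (Theorem~\ref{thm:LyapIdentifiability}) pins down $M'=M$; since $m_{ji}\neq 0$ and $i\to j\notin E(\cg_2)$, the relevant missing-edge polynomial does not vanish.

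The remaining gap is the symmetric case where the \emph{only} difference is orientation — i.e.\ $\cg_1$ and $\cg_2$ have literally the same skeleton — which is vacuous for this lemma, so genuinely the argument above (different skeletons $\Rightarrow$ some edge pair present in one and absent in the other $\Rightarrow$ a violated missing-edge relation) is complete. The main obstacle I anticipate is purely bookkeeping: making sure a completion $\cg_2'$ can be chosen that simultaneously (a) contains an orientation of the pair $\{i,j\}$ and (b) is genuinely simple with $\binom{n+1}{2}$ edges; this is exactly the easy existence statement ``every simple graph has a completion'' noted after Definition~\ref{def: missing edge}, applied after first deleting one orientation of $\{p,q\}$ if necessary so the edge count works out, and then one just has to verify the chosen $M$ on $\cg_1$ can be taken stable with $m_{ji}\neq0$, which follows from a small perturbation of any stable diagonal-dominant matrix supported on $E(\cg_1)$.
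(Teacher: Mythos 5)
Your overall strategy (exhibit a $\Sigma \in \cm_{\cg_1}$ that violates a missing-edge relation of a completion of $\cg_2$) is the right one and close to the paper's in spirit, but the ``reconciliation'' step contains a genuine gap. You take a stable $M \in \rr^{E(\cg_1)}$ with $m_{ji} \neq 0$, let $\Sigma$ be the corresponding solution of the Lyapunov equation, and claim that the drift recovered from $\Sigma$ via the completion $\cg_2'$ is again $M$, on the grounds that ``for a fixed positive definite $\Sigma$ the Lyapunov equation has a unique stable solution $M$.'' That uniqueness claim is false: for fixed $\Sigma, C \succ 0$ the equation $M\Sigma + \Sigma M^T = -C$ is a linear system with only $\binom{n+1}{2}$ independent equations in $n^2$ unknowns, so its solution set is an affine space of dimension $\binom{n}{2}$, and \emph{every} solution is stable by Lyapunov's stability theorem. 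Uniqueness holds only after restricting the support of $M$ to a fixed completion --- that is exactly what the invertibility of $A_{\cg_2'}(\Sigma)$ in \cref{thm:LyapIdentifiability} buys. Consequently the drift $M'$ recovered via $\cg_2'$ coincides with your $M$ only if $E(\cg_1) \subseteq E(\cg_2')$, and such a completion of $\cg_2$ need not exist: if $\cg_1$ and $\cg_2$ share a skeleton edge but orient it oppositely (e.g.\ $E(\cg_1)=\{1\to 2,\,2\to 3\}$ and $E(\cg_2)=\{3\to 2\}$), then no simple completion of $\cg_2$ contains $E(\cg_1)$, so $M' \neq M$ and you cannot conclude $m'_{ji} \neq 0$ from $m_{ji} \neq 0$.

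The paper closes precisely this hole by shrinking the witness: it replaces $\cg_1$ by the subgraph $\cg_1' = (V, \{i\to j\})$ retaining only the offending edge. Since neither $i\to j$ nor $j\to i$ lies in $E(\cg_2)$, the set $E(\cg_2) \cup \{i\to j\}$ extends to a common simple complete graph, and \cref{lem:lyap_struct_id_from_param_id} (applied with that globally identifiable common extension) really does pin the recovered drift down to the one you started from, yielding $\Sigma \in \cm_{\cg_1'} \subseteq \cm_{\cg_1}$ with $\Sigma \notin \cm_{\cg_2}$. Your argument becomes correct if you make the same move: support $M$ only on the single edge $i\to j$ together with the self-loops (such stable $M$ exist, e.g.\ a perturbation of $-\Id_n$ in the $(j,i)$ entry) rather than on all of $E(\cg_1)$. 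The preliminary dimension-count for graphs with different numbers of edges is fine but becomes unnecessary once this is done.
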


\begin{proof}
Let $i\to j$ be an edge in $\cg_1$ such that neither $i\to j$ nor $j\to i$ are edges in~$\cg_2$. Consider the subgraph $\cg_1' = (V, \{i\to j\})$ of $\cg_1$ in which only the edge $i\to j$ is retained. Since $i\to j, j\to i \notin E_2$, there exists a complete simple graph $\cg$ such that $E_2 \cup \{i\to j\} \subseteq E(\cg)$, thus $\cg_1'$ and $\cg_2$ are both subgraphs of~$\cg$. Since $\cg$ is simple, it is parameter identifiable and \cref{lem:lyap_struct_id_from_param_id} implies that $\cg_1'$ and $\cg_2$ are distinguishable. This yields a point $\Sigma \in \cm_{\cg_1'} \subseteq \cm_{\cg_1}$ which is not in $\cm_{\cg_2}$ which furthermore shows that $\cg_1$ and $\cg_2$ are distinguishable.
\end{proof}

We now proceed with the proof that model equivalent graphs have model equivalent induced subgraphs. In other words, if two graphs contain distinguishable subgraphs, they must be distinguishable as well. We need the following lemma about graphs with isolated vertices.

\begin{lemma} \label{lem:induced_subgraph_linear}
Let $\cg = (V, E)$ be a simple graph and $i \in V$. Let $\cg' = (V, E')$ be the graph obtained from $\cg$ by deleting all edges into and out of~$i$. Then $\Sigma \in \cm_{\cg'}$ if and only if $\Sigma \in \cm_{\cg}$ and $\sigma_{ij} = 0$ for $j \neq i$.
\end{lemma}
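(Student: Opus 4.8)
The plan is to exploit the block structure that isolating the vertex $i$ imposes on every matrix in sight, and to relate the Lyapunov equation for $\cg$ to the one for the induced subgraph $\cg[W]$, where $W = V \setminus \{i\}$. Order the indices so that $i$ comes last; then every $\nrnodes \times \nrnodes$ matrix splits into a $W$-block, an $i$-th row, an $i$-th column, and the scalar $(i,i)$-entry, and since $C = 2\Id_\nrnodes$ is diagonal the off-diagonal entries of its $i$-th row and column vanish, which keeps the blockwise form of the Lyapunov equation clean. Two preliminary observations serve both directions: first, $E(\cg') = E(\cg[W]) \cup \{i \to i\}$, so a matrix lies in $\rr^{E'}$ if and only if it is block diagonal with respect to the split $W \mid \{i\}$, with $W$-block in $\rr^{E(\cg[W])}$ and arbitrary $(i,i)$-entry; and second, since $E' \subseteq E$ we have $\rr^{E'} \subseteq \rr^E$, hence $\cm_{\cg'} \subseteq \cm_\cg$ for free.

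For the forward direction, suppose $\Sigma \in \cm_{\cg'}$ with a witnessing stable $M' = \operatorname{diag}(A, d) \in \rr^{E'}$, where $A \in \rr^{E(\cg[W])}$. The spectrum of a block-diagonal matrix is the union of the spectra of its blocks, so $A$ is stable and $d < 0$. Writing $\Sigma$ with $W$-block $\Sigma_W$, $i$-column $v = (\sigma_{ji})_{j \in W}$, and $(i,i)$-entry $s$, and expanding $M'\Sigma + \Sigma (M')^T + 2\Id_\nrnodes = 0$ blockwise, the off-diagonal $i$-column block reads $(A + d\Id_{|W|})v = 0$. Since $A$ is stable and $d < 0$, every eigenvalue of $A + d\Id_{|W|}$ has strictly negative real part, so $A + d\Id_{|W|}$ is invertible and $v = 0$; that is, $\sigma_{ij} = 0$ for all $j \neq i$. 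Together with $\Sigma \in \cm_{\cg'} \subseteq \cm_\cg$, this is precisely the claimed conclusion.

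For the converse, suppose $\Sigma \in \cm_\cg$ with $\sigma_{ij} = 0$ for $j \neq i$, so $\Sigma = \operatorname{diag}(\Sigma_W, s)$ with $\Sigma_W \succ 0$ and $s = \sigma_{ii} > 0$. Pick a stable $M \in \rr^E$ solving $M\Sigma + \Sigma M^T + 2\Id_\nrnodes = 0$ and let $M_W \in \rr^{E(\cg[W])}$ be its $W$-block. Because $\Sigma$ is block diagonal, reading off the $W$-block of the Lyapunov equation yields $M_W\Sigma_W + \Sigma_W M_W^T + 2\Id_{|W|} = 0$, irrespective of the $i$-th row and column of $M$. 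The only step requiring an external ingredient is here: from $\Sigma_W \succ 0$ and $M_W\Sigma_W + \Sigma_W M_W^T = -2\Id_{|W|} \prec 0$ one must conclude that $M_W$ is stable. This is the classical \emph{converse Lyapunov theorem}; it follows in one line by pairing the equation with a left eigenvector of $M_W$ and using positive definiteness of $\Sigma_W$. With $M_W$ stable we get $\Sigma_W \in \cm_{\cg[W]}$, and then $M' = \operatorname{diag}(M_W, -1/s)$ lies in $\rr^{E'}$, is stable, and satisfies $M'\Sigma + \Sigma (M')^T + 2\Id_\nrnodes = \operatorname{diag}\!\big(M_W\Sigma_W + \Sigma_W M_W^T + 2\Id_{|W|},\, 2s(-1/s) + 2\big) = 0$, whence $\Sigma \in \cm_{\cg'}$.

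The sole non-routine point is the invocation of the converse Lyapunov theorem in the previous paragraph: without it the submatrix $M_W$ need not be stable and would therefore fail to certify $\Sigma_W \in \cm_{\cg[W]}$. Everything else amounts to blockwise bookkeeping together with the elementary observation that a block-diagonal matrix is stable precisely when each of its diagonal blocks is.
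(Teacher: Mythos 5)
Your proof is correct, but it follows a genuinely different route from the paper's in both directions. For the forward direction the paper does not argue from the block structure at all: it cites an external result (the trek rule of Dettling et al., their Proposition~8.3) to conclude $\sigma_{ij}=0$ from the isolation of $i$ in $\cg'$, whereas you derive the same vanishing self-containedly from the off-diagonal block identity $(A+d\Id_{|W|})v=0$ and the invertibility of $A+d\Id_{|W|}$ for stable $A$ and $d<0$ --- a nice elimination of a citation. For the converse the paper takes the (unique) stable $M\in\rr^E$ attached to $\Sigma$ and shows entrywise that its $i$-th row and column actually vanish, using the $(i,j)$ and $(j,i)$ entries of the Lyapunov equation together with positive definiteness of the principal submatrix $\Sigma_{\pa_\cg(i)}$ to force $M_{i,\pa_\cg(i)}=0$; you instead never touch the $i$-th row and column of the original $M$, but extract the $W$-block equation $M_W\Sigma_W+\Sigma_W M_W^T+2\Id_{|W|}=0$, certify stability of $M_W$ via the converse Lyapunov theorem, and manufacture a fresh block-diagonal witness $M'=\operatorname{diag}(M_W,-1/s)$. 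Both arguments are sound and both rely on $C$ being diagonal. Your version buys self-containedness and a cleanly modular reduction to the induced subgraph $\cg[W]$ at the cost of importing the converse Lyapunov theorem; the paper's version buys the slightly stronger information that the identified drift matrix of $\Sigma$ itself lies in $\rr^{E'}$, which fits the identifiability theme running through that section.
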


\begin{proof}
The inclusion $E' \subseteq E$ directly yields $\cm_{\cg'} \subseteq \cm_{\cg}$. Since $i$ is isolated in $\cg'$ we additionally have $\sigma_{ij} = 0$ for all $j \neq i$ by \cite[Proposition~8.3]{dettling2022identifiability}. Note that the assumption $C = 2 \Id_\nrnodes$ is required to apply this result.
For the other inclusion, consider any $\Sigma \in \cm_\cg$ with $\sigma_{ij} = 0$ for all $j \neq i$. It satisfies the Lyapunov equation $M \Sigma + \Sigma M^T = -C$ for a unique stable matrix~$M \in \rr^{E}$. If $j \neq i$ is not a parent of $i$ in $\cg$, then by definition of the model $m_{ij} = 0$. For~any $j \in \pa_\cg(i)$, the Lyapunov equation in the $(i,j)$ entry~reads
\[
  (m_{ii} + m_{jj}) \sigma_{ij} + \sum_{k \in \pa_\cg(i)} m_{ik} \sigma_{kj} + \sum_{k \in \pa_\cg(j)} m_{jk} \sigma_{ik} = 0.
\]
By assumption, every term in the sum over $k \in \pa_\cg(j)$ is zero and $\sigma_{ij}=0$ as well. This implies $\sum_{k \in \pa_\cg(i)} m_{ik} \sigma_{kj} = 0$. As $j$ ranges in $\pa_\cg(i)$, these equations can be restated as
\[
  M_{i,\pa_\cg(i)} \Sigma_{\pa_\cg(i)} = 0,
\]
and positive definiteness of $\Sigma$ implies that all $m_{ij} = 0$ for $j \in \pa_\cg(i)$.
We now consider the edge coefficients $m_{ji}$. Again, if $j$ is not a child of $i$ in $\cg$, this coefficient must be zero. So assume that $i \in \pa_\cg(j)$ and consider the $(j,i)$ entry of the Lyapunov equation:
\[
  (m_{ii} + m_{jj}) \sigma_{ij} + \sum_{k \in \pa_\cg(j)} m_{jk} \sigma_{ik} + \sum_{k \in \pa_\cg(i)} m_{ik} \sigma_{jk} = 0.
\]
Since $\sigma_{ik} = 0$ and $m_{ik} = 0$ for all $k \neq i$ as shown above, this equation simplifies to $m_{ji} \sigma_{ii} = 0$ which implies $m_{ji} = 0$. But then $M \in \rr^{E'}$ and thus $\Sigma \in \cm_{\cg'}$.
\end{proof}

\begin{proposition} \label{prop:induced_subgraph_distinguish}
Let $\cg_1 = (V, E_1)$ and $\cg_2 = (V, E_2)$ be simple graphs with the same number of edges. If there exists a set $K \subseteq V$ such that the induced subgraphs $\cg_1[K]$ and $\cg_2[K]$ are distinguishable, then $\cg_1$ and $\cg_2$ are distinguishable as well.
\end{proposition}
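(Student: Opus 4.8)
The plan is to lift a distinguishing covariance matrix from the induced subgraphs $\cg_i[K]$ up to $\cg_1$ and $\cg_2$ by turning every vertex outside $K$ into an isolated vertex. Since $\cg_1[K]$ and $\cg_2[K]$ are distinguishable, after possibly swapping the two graphs I may fix a positive definite matrix $\Sigma^*$, indexed by $K$, with $\Sigma^* \in \cm_{\cg_1[K]}$ but $\Sigma^* \notin \cm_{\cg_2[K]}$. Writing $V \setminus K = \{v_1,\dots,v_m\}$, I would let $\cg_i'$ be obtained from $\cg_i$ by deleting every non-loop edge incident to some $v_t$, so that $\cg_i'$ consists of the induced subgraph $\cg_i[K]$ on $K$ together with the isolated vertices $v_1,\dots,v_m$. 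Deleting these edges one vertex at a time keeps every intermediate graph simple, so iterating \cref{lem:induced_subgraph_linear} yields a clean two-way description of $\cm_{\cg_i'}$: a positive definite matrix lies in $\cm_{\cg_i'}$ if and only if it is block diagonal with no covariances between $V\setminus K$ and the rest, its $K$-block lies in $\cm_{\cg_i[K]}$, and each variance $\sigma_{v_t v_t}$ is an arbitrary positive number. Concretely, a $\cg_i'$-sparse stable drift must be block diagonal with a $\cg_i[K]$-sparse stable block $M_K$ and negative diagonal entries $m_{v_t v_t}$, and the Lyapunov equation then splits into the Lyapunov equation for $M_K$ on the $K$-block together with the scalar identities $2 m_{v_t v_t}\sigma_{v_t v_t} + 2 = 0$.

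Granting this description, I would take $\Sigma$ to be the block diagonal matrix whose $K$-block equals $\Sigma^*$, with $\sigma_{v_t v_t} = 1$ for every $t$ and all remaining entries zero; then $\Sigma \in \cm_{\cg_1'} \subseteq \cm_{\cg_1}$, the inclusion being just $E(\cg_1')\subseteq E(\cg_1)$. To complete the argument I would show $\Sigma\notin\cm_{\cg_2}$ by contradiction: if $\Sigma\in\cm_{\cg_2}$, then because $\sigma_{v_t j}=0$ for every $t$ and every $j\neq v_t$, the same iterated application of \cref{lem:induced_subgraph_linear} --- now to $\cg_2$ --- forces $\Sigma\in\cm_{\cg_2'}$, and reading off its $K$-block gives $\Sigma^*\in\cm_{\cg_2[K]}$, contradicting the choice of $\Sigma^*$. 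Hence $\Sigma\in\cm_{\cg_1}\setminus\cm_{\cg_2}$, so in particular $\cm_{\cg_1}\neq\cm_{\cg_2}$. Finally, since $\cg_1$ and $\cg_2$ are simple with $|E_1|=|E_2|$, \cref{thm:LyapIdentifiability} gives $\dim\cm_{\cg_1}=\dim\cm_{\cg_2}$, so $\cm_{\cg_1}\not\subseteq\cm_{\cg_2}$ together with the second part of \cref{prop:Geom} upgrades this to $\dim(\cm_{\cg_1}\cap\cm_{\cg_2}) < \max(\dim\cm_{\cg_1},\dim\cm_{\cg_2})$, i.e.\ $\cg_1$ and $\cg_2$ are distinguishable.

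The step I expect to require the most care is the two-way description of $\cm_{\cg_i'}$, namely that the GCLM of a graph with isolated vertices factors as the GCLM on the non-isolated part times one free positive variance per isolated vertex. The ``$\subseteq$'' half is precisely what \cref{lem:induced_subgraph_linear} says after repeated use, and the ``$\supseteq$'' half is the explicit block-diagonal construction above; the one subtlety is checking that the iteration of \cref{lem:induced_subgraph_linear} is legitimate --- each step must isolate a vertex whose incident edges have not yet been removed, and simplicity must be preserved throughout. Once that is nailed down, the rest is routine bookkeeping.
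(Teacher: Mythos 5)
Your proposal is correct and follows essentially the same route as the paper: both arguments iterate \cref{lem:induced_subgraph_linear} to isolate the vertices of $V\setminus K$, identify the resulting model with $\cm_{\cg_i}\cap W_K$ where $W_K$ is the graph-independent linear space of block-diagonal matrices, and observe that the $K$-blocks recover $\cm_{\cg_i[K]}$. Your explicit construction of the witness $\Sigma$ (with identity block on $V\setminus K$) and the closing dimension remark are just more detailed versions of what the paper leaves implicit.
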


\begin{proof}
Apply \cref{lem:induced_subgraph_linear} inductively to isolate all vertices $V \setminus K$ in $\cg_1$. The resulting graph $\cg_1'$ has as its model the intersection of $\cm_{\cg_1}$ with the linear space $W_{K}$ defined by the conditions $\sigma_{ij} = 0$ for $i \in V \setminus K$ and $j \neq i$. This linear space depends only on $K$ and not on the graph. All matrices in $\cm_{\cg_1'}$ are block-diagonal with a non-trivial $K \times K$ block and an arbitrary diagonal matrix in its complement. The block matrices $\{ \Sigma_K : \Sigma \in \cm_{\cg_1'} \}$ form the model of~$\cg_1[K]$. Thus the assumption of our claim is equivalent to $\cm_{\cg_1} \cap W_{K} \neq \cm_{\cg_2} \cap W_{K}$ which readily implies $\cm_{\cg_1} \neq \cm_{\cg_2}$.
\end{proof}

Combining the previous results we obtain the necessity of the condition in \Cref{thm:struct_id_induced_subgraph}.

\begin{corollary}\label{cor:only if direction}
Let $\cg_1=(V,E_1)$ and $\cg_2=(V,E_2)$ be two simple graphs. If $\cg_1$ and $\cg_2$ are model equivalent, then $\cg_1$ and $\cg_2$ have the same skeleton and the induced subgraphs $\cg_1[K]$ and $\cg_2[K]$ are model equivalent for all $K\subseteq V$.
\end{corollary}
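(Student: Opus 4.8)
The plan is to assemble the corollary directly from the two results just established, \cref{lemma:skel_struct_id} and \cref{prop:induced_subgraph_distinguish}, using the basic observation that for simple graphs the properties ``model equivalent'' ($\cm_{\cg_1} = \cm_{\cg_2}$) and ``distinguishable'' ($\cm_{\cg_1} \neq \cm_{\cg_2}$) are exact negations of one another.

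First I would dispatch the skeleton claim. Assuming $\cm_{\cg_1} = \cm_{\cg_2}$, the contrapositive of \cref{lemma:skel_struct_id} forces $\cg_1$ and $\cg_2$ to have the same skeleton. In particular this gives $|E_1| = |E_2|$, which is precisely the hypothesis needed to invoke \cref{prop:induced_subgraph_distinguish}.

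Next, for the induced-subgraph claim, I would fix an arbitrary $K \subseteq V$ and argue by contraposition: if $\cg_1[K]$ and $\cg_2[K]$ were not model equivalent, hence distinguishable, then \cref{prop:induced_subgraph_distinguish} --- applicable since $\cg_1$ and $\cg_2$ are simple with the same number of edges --- would imply that $\cg_1$ and $\cg_2$ are distinguishable, contradicting $\cm_{\cg_1} = \cm_{\cg_2}$. Therefore $\cg_1[K]$ and $\cg_2[K]$ are model equivalent for every $K \subseteq V$.

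There is no substantive obstacle left here: all of the work has already been done in \cref{lemma:skel_struct_id} and \cref{prop:induced_subgraph_distinguish}, and the present argument is purely a matter of combining their contrapositives. The only point requiring a moment's care is that ``distinguishable'' really means the failure of model equivalence --- consistent with, and in fact sharpened by, the dimension statement \eqref{eqn:new-gen-id} --- so that the two contrapositives combine cleanly into the stated conclusion.
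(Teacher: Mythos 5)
Your proposal is correct and matches the paper's intent exactly: the paper states this corollary with no written proof beyond ``combining the previous results,'' meaning precisely the contrapositives of \cref{lemma:skel_struct_id} and \cref{prop:induced_subgraph_distinguish} assembled as you describe. Your additional care in noting that the same-skeleton conclusion supplies the hypothesis $|E_1|=|E_2|$ needed for \cref{prop:induced_subgraph_distinguish} is the one nontrivial link in the chain, and you have it right.
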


\subsection{Equivalence Under Super-Covered Edge Flips} \label{sec:EdgeFlips}

We now shift our focus to proving model equivalence among different graphs. For a covariance matrix $\Sigma \in \PD_V$ and a subset $K \subseteq V$ we denote by $\Sigma_K$ the covariance matrix of the subset $K$, i.e., the submatrix of $\Sigma$ whose rows and columns are indexed by~$K$.

\begin{lemma} \label{lemma: adding a sink}
Let $\cg = (V, E)$ be any simple graph and consider the graph $\cg_1 = (V_1, E_1)$ obtained from $\cg$ by adding a new vertex $w$ with its self-loop $w \to w$ as well as the edges $v \to w$ for all $v\in V$. Then for any $\Sigma \in \PD_{V_1}$ we have $\Sigma \in \cm_{\cg_1}$ if and only if $\Sigma_V \in \cm_\cg$.
\end{lemma}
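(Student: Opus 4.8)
The statement asserts that attaching a universal sink $w$ to a simple graph $\cg$ does not change the achievable covariance matrices on the original vertices $V$: precisely, $\Sigma \in \cm_{\cg_1}$ if and only if $\Sigma_V \in \cm_\cg$. The plan is to analyze the Lyapunov equation for $\cg_1$ blockwise according to the partition $V_1 = V \cup \{w\}$ and show that the $w$-row/column of the drift matrix is essentially a free parameter that can always be chosen to satisfy the equation, while the $V \times V$ block decouples into exactly the Lyapunov equation for $\cg$.

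First I would set up notation: write a candidate stable drift matrix $M_1 \in \rr^{E_1}$ in block form with a $V \times V$ block $M$, a zero block in the $V$-row and $w$-column (since $w$ has no children in $\cg_1$), a row vector $m_w = (m_{wv})_{v \in V}$ of coefficients $v \to w$, and a scalar self-loop entry $m_{ww}$; similarly write $\Sigma$ in blocks $\Sigma_V$, $\sigma_{Vw}$, $\sigma_{ww}$, and note $C = 2\Id_{V_1}$ decomposes as $2\Id_V$ and the scalar $2$. Expanding $M_1\Sigma + \Sigma M_1^T + C = 0$ gives three independent matrix equations: the $(V,V)$-block reads $M \Sigma_V + \Sigma_V M^T + 2\Id_V = 0$, which is exactly the Lyapunov equation for $\cg$ and involves none of the new parameters; the $(V,w)$-block reads $M \sigma_{Vw} + m_{ww}\sigma_{Vw} + \Sigma_V m_w^T = 0$; and the $(w,w)$-entry reads $2 m_w \sigma_{Vw} + 2 m_{ww}\sigma_{ww} + 2 = 0$. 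For the "only if" direction this block decomposition immediately shows that if $\Sigma \in \cm_{\cg_1}$ via some stable $M_1 \in \rr^{E_1}$, then its $(V,V)$-block $M$ is $\cg$-sparse and stable (stability of $M_1$ forces stability of its principal block $M$, since the block triangular structure makes the spectrum of $M_1$ the union of that of $M$ and the singleton $\{m_{ww}\}$) and solves the Lyapunov equation for $\cg$, so $\Sigma_V \in \cm_\cg$.

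For the "if" direction, suppose $\Sigma \in \PD_{V_1}$ with $\Sigma_V \in \cm_\cg$; let $M \in \rr^E$ be the unique stable matrix with $M\Sigma_V + \Sigma_V M^T + 2\Id_V = 0$. I need to produce $m_w$ and $m_{ww}$ making the remaining two block-equations hold with $m_{ww} < 0$ (to preserve stability of $M_1$). From the $(V,w)$-equation, solve $m_w^T = -\Sigma_V^{-1}(M + m_{ww}\Id_V)\sigma_{Vw}$, which is well-defined for any choice of $m_{ww}$ since $\Sigma_V \succ 0$. Substituting into the $(w,w)$-equation yields a scalar equation in $m_{ww}$; the key point is that this is a quadratic (or affine) relation whose solvability and the existence of a \emph{negative} root must be checked using positive definiteness of the full $\Sigma$. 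Concretely, plugging in, the $(w,w)$-equation becomes $-\sigma_{Vw}^T \Sigma_V^{-1}(M + m_{ww}\Id_V)\sigma_{Vw} + m_{ww}\sigma_{ww} + 1 = 0$, i.e. $m_{ww}(\sigma_{ww} - \sigma_{Vw}^T\Sigma_V^{-1}\sigma_{Vw}) = \sigma_{Vw}^T\Sigma_V^{-1}M\sigma_{Vw} - 1$; here $\sigma_{ww} - \sigma_{Vw}^T\Sigma_V^{-1}\sigma_{Vw}$ is the Schur complement, which is strictly positive because $\Sigma \succ 0$, so there is a \emph{unique} solution $m_{ww}$. The remaining obstacle is to confirm $m_{ww} < 0$: this is where I expect the real work. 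I would use the Lyapunov equation for $\Sigma_V$ to rewrite the numerator $\sigma_{Vw}^T \Sigma_V^{-1} M \sigma_{Vw}$ in a symmetric form — for any vector $x$, $x^T \Sigma_V^{-1} M x = \tfrac12 x^T \Sigma_V^{-1}(M\Sigma_V + \Sigma_V M^T)\Sigma_V^{-1} x = -x^T \Sigma_V^{-1} x$ using $M\Sigma_V + \Sigma_V M^T = -2\Id_V$ — so the numerator equals $-\sigma_{Vw}^T\Sigma_V^{-1}\sigma_{Vw} - 1 < 0$, giving $m_{ww} = \frac{-\sigma_{Vw}^T\Sigma_V^{-1}\sigma_{Vw} - 1}{\sigma_{ww} - \sigma_{Vw}^T\Sigma_V^{-1}\sigma_{Vw}} < 0$ as needed. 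With $m_{ww} < 0$ fixed, $M_1$ is block triangular with stable blocks, hence stable, and $M_1 \in \rr^{E_1}$ by construction; the three block-equations hold, so $M_1\Sigma + \Sigma M_1^T + 2\Id_{V_1} = 0$, proving $\Sigma \in \cm_{\cg_1}$.

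The main obstacle, then, is not the block decomposition (routine) but verifying that the forced value of the self-loop $m_{ww}$ is negative; the trick above — using the Lyapunov identity for $\Sigma_V$ to collapse $x^T\Sigma_V^{-1}Mx$ to $-x^T\Sigma_V^{-1}x$ — resolves it cleanly and also shows the extension is in fact unique. A minor point to handle carefully is the edge case $V = \emptyset$ or $\sigma_{Vw} = 0$, where the formulas degenerate but the conclusion ($m_{ww} = -1/\sigma_{ww} < 0$) still holds.
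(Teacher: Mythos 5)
Your proof is correct, but it takes a genuinely different route from the paper's. You analyze the Lyapunov equation $M_1\Sigma+\Sigma M_1^T+2\Id=0$ directly in block form over the partition $V_1=V\cup\{w\}$: the $(V,V)$-block decouples into the Lyapunov equation for $\cg$, while the $(V,w)$- and $(w,w)$-blocks uniquely determine the new parameters $m_w$ and $m_{ww}$, whose stability you then verify by hand. The paper instead stays entirely inside its missing-edge-relation calculus: it picks a completion $\cg_1'$ of $\cg_1$, observes that $\cg$ and $\cg_1$ have the same missing edges relative to $\cg'=\cg_1'\setminus w$ and $\cg_1'$, and extracts from a block structure of $A_{\cg_1'}(\Sigma)$ the determinant factorization $f^{ij}_{\cg_1'}(\Sigma)=2\det(\Sigma)\,f^{ij}_{\cg'}(\Sigma_V)$, after which \cref{prop:MissingEdge} finishes the argument. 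Your approach is more elementary and self-contained, and it yields as a byproduct the explicit (and unique) stable extension of the drift matrix; the paper's approach delegates all existence and stability issues to \cref{prop:MissingEdge} and keeps the computation in the $A_\cg(\Sigma)$ framework reused throughout \cref{sec:EdgeFlips}. Two minor remarks on your write-up: the identity in your sign argument should read $x^T\Sigma_V^{-1}Mx=\tfrac12 x^T\Sigma_V^{-1}(M\Sigma_V+\Sigma_V M^T)\Sigma_V^{-1}x=-x^T\Sigma_V^{-2}x$ rather than $-x^T\Sigma_V^{-1}x$ (still strictly negative for $x\neq 0$, so the conclusion $m_{ww}<0$ stands); and the stability check you flag as the main obstacle is in fact automatic, since any real matrix $M_1$ satisfying $M_1\Sigma+\Sigma M_1^T=-2\Id$ with $\Sigma\succ 0$ is stable by the classical Lyapunov criterion, so the explicit computation of the sign of $m_{ww}$ could have been skipped.
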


\begin{proof}
Let $\cg_1'$ be a completion of~$\cg_1$. This implies that $\cg' \defas \cg_1' \setminus w$ is a completion of~$\cg$. By construction $\cg$ and $\cg_1$ have the same set of missing edges with respect to these two completions. By virtue of \Cref{prop:MissingEdge}, $\Sigma \in \PD_{V_1}$ is in $\cm_{\cg_1}$ if and only if it satisfies all missing-edge relations for $\cg_1$ with respect to $\cg_1'$. To compute these relations, we rearrange the rows and columns of $A_{\cg_1'}(\Sigma)$ to create a block structure. Order the rows into three groups: first $(k, l)$ with $k, l \in V$, then $(k, w)$ with $k \in V$ and lastly $(w,w)$. For the columns, choose an analogous ordering of the edges with first $i \to j$ for $i, j \in V$, then $i \to w$ and lastly $w \to w$. \Cref{def:Asigma} then yields the following block structure:
\begin{align}\label{eq:blocks}
  A_{\cg_1'}(\Sigma) = \begin{pmatrix}
    A_{\cg'}(\Sigma_V) & 0 & 0 \\
    \ast & \Sigma_V & \sigma_{kw} \\
    \ast & 2\sigma_{wi} & 2\sigma_{ww}
  \end{pmatrix}.
\end{align}
Note that the four bottom right blocks of \eqref{eq:blocks} form $\Sigma$ with its last row doubled. Hence, $f^{ij}_{\cg_1'}(\Sigma) = 2 \det(\Sigma) \, f^{ij}_{\cg'}(\Sigma_V)$ and thus by invoking \Cref{prop:MissingEdge} once more for the model $\cm_{\cg}$ and using $2 \det(\Sigma) > 0$, it follows that $\Sigma \in \cm_{\cg_1}$ if and only if $\Sigma_V \in \cm_{\cg}$.
\end{proof}

\begin{lemma} \label{lem:remove full sink clique}
Let $\cg_1 = (V, E_1)$ and $\cg_2 = (V, E_2)$ be two DAGs with the same number of edges. Suppose that $V = P \cup C$ where $P \cap C = \emptyset$, $C$ is a clique in both $\cg_1$ and $\cg_2$, and ${p \to c} \in E_1 \cap E_2$ for all $p \in P$ and all $c \in C$. Then $\cm_{\cg_1[P]} = \cm_{\cg_2[P]}$ implies $\cm_{\cg_1} = \cm_{\cg_2}$.
\end{lemma}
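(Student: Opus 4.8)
The plan is to show that, under the stated hypotheses, the model $\cm_\cg$ is determined entirely by the induced submodel $\cm_{\cg[P]}$. Once this is established, the conclusion is immediate: applying it to both $\cg_1$ and $\cg_2$ and using $\cm_{\cg_1[P]} = \cm_{\cg_2[P]}$ gives $\cm_{\cg_1} = \cm_{\cg_2}$. Thus the real content is the following claim, which I would prove by induction on $|C|$: \emph{for every simple DAG $\cg = (V,E)$ with $V = P \cup C$, $P \cap C = \emptyset$, such that $C$ is a clique in $\cg$ and $p \to c \in E$ for all $p \in P$ and $c \in C$, one has}
\[
  \cm_\cg = \{\Sigma \in \PD_V : \Sigma_P \in \cm_{\cg[P]}\}.
\]

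The base case $|C| = 0$ is trivial. For the inductive step I would use that $\cg[C]$, being a simple DAG on a clique, is an acyclic tournament and hence has a unique sink $w \in C$. This $w$ has no children in $\cg$ at all: none inside $C$ because it is the sink of $\cg[C]$, and none in $P$ because every $P$--$C$ edge is oriented into $C$. Moreover $w$ receives an edge from every other vertex of $V$: from all of $P$ by hypothesis, and from all of $C \setminus \{w\}$ since $C$ is a clique and $w$ is the sink of $\cg[C]$. Hence $\cg$ is precisely the graph obtained from $\cg \setminus w$ by adjoining $w$ as a ``full sink'' in the sense of \cref{lemma: adding a sink}, so that $\Sigma \in \cm_\cg$ if and only if $\Sigma_{V \setminus \{w\}} \in \cm_{\cg \setminus w}$. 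The graph $\cg \setminus w$ is again a simple DAG satisfying the same hypotheses with $P$ and $C \setminus \{w\}$, and $(\cg \setminus w)[P] = \cg[P]$; so the inductive hypothesis gives $\cm_{\cg \setminus w} = \{\Sigma' \in \PD_{V \setminus \{w\}} : \Sigma'_P \in \cm_{\cg[P]}\}$. Composing the two equivalences yields $\Sigma \in \cm_\cg$ if and only if $\Sigma_P \in \cm_{\cg[P]}$, which completes the induction, and applying the claim to $\cg_1$ and $\cg_2$ finishes the proof.

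There is no serious technical obstacle here; the only point requiring care is that $\cg_1$ and $\cg_2$ may orient $C$ (and the $P$-part) very differently, so one cannot hope to peel off a common sink from both graphs simultaneously. Routing the argument through the intermediate statement ``$\cm_\cg$ depends only on $\cg[P]$'' sidesteps this and keeps the bookkeeping clean. I would also note in passing that the hypothesis that $\cg_1$ and $\cg_2$ have the same number of edges is not needed for the argument above --- it follows anyway from $\cm_{\cg_1[P]} = \cm_{\cg_2[P]}$ via \cref{cor:only if direction}, since model-equivalent graphs have the same skeleton while the within-$C$ and $P$-to-$C$ parts contribute the same number of edges to both graphs.
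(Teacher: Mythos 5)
Your proof is correct and follows essentially the same route as the paper: the paper's own argument is precisely "iterated use of \cref{lemma: adding a sink} shows that $\Sigma \in \cm_{\cg_i}$ if and only if $\Sigma_P \in \cm_{\cg_i[P]}$," and your induction on $|C|$, peeling off the unique sink of the acyclic tournament $\cg[C]$ at each step, is just a careful spelling-out of that iteration. Your side remark that the equal-edge-count hypothesis is superfluous is also accurate.
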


\begin{proof}
The idea is that none of the vertices in $C$ appear in any missing edge relation of $\cg_1$ or $\cg_2$ with respect to any of their completions, so they contribute no information to the model distinguishability problem. To make this rigorous, observe that iterated use of \cref{lemma: adding a sink} shows that $\Sigma \in \PD_V$ is in $\cm_{\cg_1}$ if and only if $\Sigma_P \in \cm_{\cg_1[P]}$; analogously $\Sigma \in \cm_{\cg_2}$ if and only if $\Sigma_P \in \cm_{\cg_2[P]}$. Hence if $\cm_{\cg_1[P]} = \cm_{\cg_2[P]}$, it follows that $\cm_{\cg_1} = \cm_{\cg_2}$.
\end{proof}

\begin{lemma} \label{lemma: almost complete}
Let $\cg_1$ be an almost-complete DAG, i.e., a DAG with $\binom{n+1}{2}-1$ edges, and suppose that $a \to b$ is a super-covered edge in $\cg_1$. Let $\cg_2$ be the graph obtained by flipping $a \to b$. Then $\cg_2$ is a DAG and $\cm_{\cg_1} = \cm_{\cg_2}$.
\end{lemma}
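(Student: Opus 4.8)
The plan is to analyze the single missing-edge relation that defines $\cm_{\cg_1}$ and show it coincides with the one defining $\cm_{\cg_2}$. Since $\cg_1$ is almost-complete, its only missing edge is some $i \to j$, and by \cref{prop:MissingEdge} the model $\cm_{\cg_1}$ is cut out by the single equation $f^{ij}_{\cg'}(\Sigma) = 0$, where $\cg'$ is the unique completion (the complete graph on $V$). First I would observe that $\cg_2$ is acyclic: flipping $a \to b$ to $b \to a$ cannot create a directed cycle because the super-covered conditions $\ch(a) = \ch(b) \cup \{b\}$ and $\pa(a) \cup \{a\} = \pa(b)$ force the neighborhoods of $a$ and $b$ to be "parallel," so any would-be cycle through $b \to a$ could be rerouted; more simply, $a$ and $b$ are topologically adjacent in $\cg_1$ (nothing lies strictly between them since every parent of $b$ is a parent of $a$ and every child of $a$ is a child of $b$), so swapping their order in a topological sort still yields a valid order for $\cg_2$. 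Either way $\cg_2$ is a DAG, hence simple, hence has $\binom{n+1}{2} - 1$ edges and the same completion $\cg'$, with a unique missing edge as well.

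The heart of the argument is a dimension count combined with \cref{prop:Geom}. Both $\cg_1$ and $\cg_2$ are simple DAGs, so by \cref{thm:LyapIdentifiability} they are globally identifiable and $\dim(\cm_{\cg_1}) = |E(\cg_1)| = |E(\cg_2)| = \dim(\cm_{\cg_2})$. It therefore suffices to show that one model is contained in the other — equality then follows from \cref{prop:Geom}(1) since both are irreducible of the same dimension. To prove containment, I would take $\Sigma \in \cm_{\cg_1}$ with its unique stable drift $M \in \rr^{E(\cg_1)}$, and construct a new stable matrix $M' \in \rr^{E(\cg_2)}$ solving the Lyapunov equation for the same $\Sigma$. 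The matrices $M$ and $M'$ should agree everywhere except in the $2 \times 2$ block indexed by $\{a, b\}$: we need $m'_{ab} = 0$ (the edge $a \to b$ is gone) and $m'_{ba}$ free to be nonzero (the edge $b \to a$ is present). One computes $M'$ from $M$ by a local change of basis: conjugation by a matrix acting only on coordinates $a$ and $b$. Concretely, since the super-covered conditions make rows/columns $a$ and $b$ of $M$ "interchangeable up to the diagonal" relative to all other vertices, there is an elementary transformation $T$ (identity outside the $\{a,b\}$ block) with $M' = T M T^{-1}$ still $\cg_2$-sparse and with $\Sigma' = T \Sigma T^T$; but we actually want to fix $\Sigma$, so the cleaner route is to directly exhibit $M'$ entrywise and verify $M'\Sigma + \Sigma (M')^T + C = 0$ using the Lyapunov equation already satisfied by $M$ together with the equations forced on $\Sigma$ by the super-covered structure (the density conditions $k \to l \in E$ for $k \in \pa(b)$, $l \in \ch(a)$, and the marginal independence $k \indep \{a,b\}$ for the remaining vertices).

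The main obstacle I anticipate is precisely this explicit construction of $M'$ and the verification that it stays $\cg_2$-sparse while preserving $\Sigma$: one must check that zeroing out $m'_{ab}$ and compensating elsewhere does not disturb any of the other (already vanishing) off-support entries of $M$, and this is where all three bullets of \cref{defn:super-covered-edge} get used. I expect the symmetry "$a$ and $b$ have the same neighbors outside $\{a,b\}$, with a complete bipartite structure between $\pa(b)$ and $\ch(a)$" to make the compensating entries land exactly on edges present in $\cg_2$, while the marginally-independent vertices $k$ contribute zero covariance entries $\sigma_{ak} = \sigma_{bk} = 0$ so that no cross-terms appear. An alternative, perhaps slicker, approach avoiding an explicit $M'$: use \cref{lemma:model-via-almost-complete-intersection} and \cref{prop:MissingEdge} to reduce equality of the two models to equality of the two determinantal polynomials $f^{ij}_{\cg'}$ and $f^{i'j'}_{\cg'}$ (for the respective missing edges $i\to j$ of $\cg_1$ and $i'\to j'$ of $\cg_2$), and show these two polynomials have the same zero set on $\PD_n$ by a row/column operation on $A_{\cg'}(\Sigma)$ supported on the rows and columns touching $a$ and $b$ — again the super-covered hypotheses are exactly what makes such an operation legal and degree-preserving.
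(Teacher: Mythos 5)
Your high-level framing is sound: $\cg_2$ is indeed a DAG (your topological-adjacency argument is essentially the one the paper uses later, in \cref{prop: edge flip}), both models are irreducible of the same dimension $\binom{n+1}{2}-1$, and by \cref{prop:Geom} it suffices to establish one containment. But the containment is the entire content of the lemma, and you do not prove it --- you explicitly flag the construction of the $\cg_2$-sparse drift $M'$ as ``the main obstacle I anticipate'' and then offer only heuristics. The conjugation idea $M' = TMT^{-1}$ fails for the reason you yourself note (it moves $\Sigma$), and the ``alternative, slicker'' route of comparing the two missing-edge determinants by row/column operations is not legitimate as stated: $\cg_1$ and $\cg_2$ do not share a completion (one completion must contain $a \to b$, the other $b \to a$), so you are comparing determinants of two genuinely different matrices $A_{\cg_1'}^{ij}(\Sigma;C)$ and $A_{\cg_2'}^{i'j'}(\Sigma;C)$, and no simple row/column operation relates them. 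Establishing that these two determinantal polynomials cut out the same set on $\PD_n$ is exactly the hard computation, and it is missing.

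The paper closes this gap by first observing that the super-covered condition forces the unique missing edge of the almost-complete graph to lie either entirely within $P = \pa_{\cg_1}(a)$ or entirely within $C = \ch_{\cg_1}(b)$ (all edges between $P \cup \{a\}$ and $C \cup \{b\}$ are present by the second bullet of \cref{defn:super-covered-edge}), and then splitting into two cases. When the missing edge is inside $P$, the set $\{a,b\} \cup C$ is a full sink clique and \cref{lemma: adding a sink} / \cref{lem:remove full sink clique} reduce the claim to $\cg_1[P] = \cg_2[P]$. When the missing edge is inside $C$, the paper passes to the non-simple union $\cg = \cg_1 \cup \cg_2$ and applies \cref{lemma:NonParam}, which requires exhibiting an explicit non-zero vector in $\ker A_\cg(\Sigma)$; this is done by a concrete linear combination of the columns of $H(\Sigma)$ with coefficients given by minors of $\Sigma_{P,Q}$, verified via Laplace expansion. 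Without either this case analysis or some substitute for the kernel computation, your argument does not go through; as written it establishes only the (comparatively easy) facts that $\cg_2$ is a DAG and that equality would follow from containment.
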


\begin{proof}
Since $\cg_1$ is almost-complete and contains a super-covered edge $a \to b$, its vertex set decomposes as $V = P \cup \{a,b\} \cup C$ where $P = \pa_{\cg_1}(a)$ and $C = \ch_{\cg_1}(b)$. It is easy to see that $\cg_2$ with the flipped edge is still a DAG. We now distinguish cases depending on what kind of edge is missing to the completion of~$\cg_1$. Since $a \to b$ is super-covered, there is an edge from every parent of $b$ to every child of $a$. This only leaves two possibilities for the missing~edge.

(1) Suppose that $p_1 \to p_2$ is missing in $\cg_1$ for $p_1, p_2 \in P$. In this case $\{a,b\} \cup C$ is a clique in both $\cg_1$ and $\cg_2$ which has no outgoing edges but an incoming edge from every $p \in P$. Since $\cg_1[P] = \cg_2[P]$, \cref{lem:remove full sink clique} shows $\cm_{\cg_1} = \cm_{\cg_2}$.

(2) Suppose that the missing edge is $c_1 \to c_2$ for $c_1, c_2 \in C$. Consider the non-simple graph $\cg = \cg_1 \cup \cg_2$ which has $\binom{n+1}{2}$ edges. We show that this graph is not parameter identifiable so that \cref{lemma:NonParam} yields the desired conclusion. To accomplish this, we pick any $\Sigma \in \cm_{\cg}$ and show that the associated matrix $A_\cg(\Sigma)$ is singular by exhibiting a non-zero element in~its~kernel. By \cite[Lemma~4.3]{dettling2022identifiability} this yields non-identifiability.

Recall from \cref{def:Asigma} that $A_\cg(\Sigma)$ is a submatrix of the full $A(\Sigma)$ matrix in which the columns which do not correspond to edges of $\cg$ are omitted. A basis of the kernel of $A(\Sigma)$ was given in \cite[Lemma~6.5 \& Eq.~(6.1)]{dettling2022identifiability}: it consists of the columns of the matrix~$H(\Sigma)$~below. This matrix has its rows indexed by all possible edges $i \to j$, $i, j \in V$, and its columns by pairs $(k, l)$ for $k, l \in V$ with $k < l$ in the topological ordering of~$\cg_1$. Its entries are given by
\begin{equation}
  H(\Sigma)_{i\to j, (k,l)} = \begin{cases}
    -\sigma_{lj}, & \text{if $i=k$}, \\
     \sigma_{kj}, & \text{if $i=l$}, \\
     0, & \text{else}.
  \end{cases}
\end{equation}
Let $Q = P \cup \{a,b\}$ and $T = \{ (s,t) \in Q \times Q : s < t\}$ where the ordering is that in $\cg_1$ where $p < a < b$ for all $p \in P$. We may also assume that $Q = \{1, \dots, |P|+2\}$. Then define the vector~$d \in \rr^{\binom{n}{2}}$ via
\begin{equation}
  d_{(s,t)} = \begin{cases}
    (-1)^{s+t+1} \det\left(\Sigma_{P,Q \setminus \{s,t\}}\right), & \text{if $(s,t) \in T$}, \\
    0, & \text{else}.
  \end{cases}
\end{equation}
By construction $D = H(\Sigma) d_{(s,t)}$ is a vector in the kernel of~$A(\Sigma)$. Our goal is now to prove that $D_{i \to j} = 0$ whenever $i \to j \notin E(\cg)$. In this case, the subvector $D_{E(\cg)}$ must be in the kernel of the submatrix $A_\cg(\Sigma)$.

Consider any possible edge $i \to j$, with $i, j \in V$, and write out the corresponding entry
\begin{align}
  D_{i \to j} &= \sum_{(s,t) \in T} d_{(s,t)} H(\Sigma)_{i\to j,(s,t)} \notag \\
  &= \sum_{(s,i) \in T} (-1)^{1+i+s} \sigma_{js} \det\left( \Sigma_{P,Q\setminus\{s,i\}} \right) + \sum_{(i,t) \in T} (-1)^{1+i+t} \sigma_{jt} \det\left( \Sigma_{P,Q\setminus\{t,i\}} \right). \label{eqn:Dij}
\end{align}
If $i \in C$ then both sums are empty by the definitions of~$T$ and $H(\Sigma)$, so this expression is zero. If $i \in Q$ and $j \in C$, then $i \to j \in E(\cg)$ and there is nothing to check. Now assume that $i, j \in Q$ and that $i \to j \notin E(\cg)$. In particular this means $j \in P$. The following identity is easy to verify following \eqref{eqn:Dij} and using Laplace expansion of the determinant on the right-hand side along the $j$-th row:
\begin{equation}
  \label{eqn:DijDet}
  D_{i \to j} = (-1)^{i+j}\det\left( \Sigma_{P \sqcup \{j\}, Q \setminus \{i\}} \right).
\end{equation}
Since $j \in P$, this matrix has a repeated row and thus its determinant vanishes, as required. Note also that $D_{E(\cg)} \in \ker(A_\cg(\Sigma))$ is non-zero since $D_{a\to b} = \pm\det\left( \Sigma_{P\cup \{b\}} \right) \neq 0$.
\end{proof}

\begin{proposition} \label{prop: edge flip}
Let $\cg_1 = (V, E_1) $ be a DAG with a super-covered edge $a \to b \in E_1$. If $\cg_2$ is obtained from $\cg_1$ by flipping $a \to b$ then $\cg_2$ is a DAG and $\cm_{\cg_1} = \cm_{\cg_2}$.
\end{proposition}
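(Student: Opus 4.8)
The plan is to bootstrap from the almost-complete case (Lemma~\ref{lemma: almost complete}) via the intersection formula of Lemma~\ref{lemma:model-via-almost-complete-intersection}, after first peeling off the part of the vertex set that is irrelevant to the edge $a\to b$. That $\cg_2$ is a DAG is quick: any directed cycle in $\cg_2$ must traverse the new edge $b\to a$, so there is a directed path $a\rightsquigarrow b$ in $\cg_1\setminus\{a\to b\}$; it has length at least two, so its second vertex $v$ lies in $\ch_{\cg_1}(a)\setminus\{b\}=\ch_{\cg_1}(b)$ (by super-coveredness), whence $b\to v$ and $v\rightsquigarrow b$ both live in $\cg_1$, contradicting acyclicity.

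Next I would write $V=P\sqcup\{a,b\}\sqcup C\sqcup R$ with $P=\pa_{\cg_1}(a)$, $C=\ch_{\cg_1}(b)$ and $R$ the remainder. Using $\pa_{\cg_1}(b)=P\cup\{a\}$ and $\ch_{\cg_1}(a)=C\cup\{b\}$ one gets $\neighbors_{\cg_1}(a)\cap\neighbors_{\cg_1}(b)=P\cup C$, so by the last clause of Definition~\ref{defn:super-covered-edge} every $r\in R$ has no trek to $a$ or to $b$ (invoking the trek criterion recorded just after that definition). A short case check — using the edges $p\to a$, $p\to b$ for $p\in P$ and $b\to c$ for $c\in C$ to build forbidden treks — shows $R$ has no edge to $P\cup\{a,b\}\cup C$ at all. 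Hence $\cg_1=\cg_1^\circ\sqcup\cg_1[R]$ with $\cg_1^\circ:=\cg_1[P\cup\{a,b\}\cup C]$, likewise $\cg_2=\cg_2^\circ\sqcup\cg_2[R]$ with $\cg_2[R]=\cg_1[R]$, and $a\to b$ remains super-covered in $\cg_1^\circ$. Since the Lyapunov model of a disjoint union of DAGs is precisely the set of block-diagonal positive definite matrices whose diagonal blocks lie in the component models — a fact one reads off the block decomposition of $A_\cg(\Sigma)$, or deduces from uniqueness for a Sylvester equation with disjoint spectra — proving $\cm_{\cg_1^\circ}=\cm_{\cg_2^\circ}$ suffices, so we may assume $R=\emptyset$, i.e.\ $V=P\sqcup\{a,b\}\sqcup C$.

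In that situation, take $\cg_1'$ to be the complete DAG whose topological order lists $P$ (consistently with $\cg_1[P]$), then $a$, then $b$, then $C$ (consistently with $\cg_1[C]$); one checks that every edge of $\cg_1$ goes forward in this order, so $\cg_1'$ is a completion of $\cg_1$. Because $a$ sits immediately after all of $P$ and immediately before $b$, and $b$ immediately before all of $C$, we get $\pa_{\cg_1'}(a)=P$, $\ch_{\cg_1'}(a)=\{b\}\cup C$, and analogously for $b$; combined with the middle clause of Definition~\ref{defn:super-covered-edge} (which forces $p\to c\in E_1$ for all $p\in P$, $c\in C$), the missing-edge set $\tilde E=E(\cg_1')\setminus E(\cg_1)$ consists only of edges inside $P$ and edges inside $C$. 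For every such $e$, none of the three super-covered conditions for $a\to b$ mentions an edge lying inside $P$ or inside $C$, so $a\to b$ stays super-covered in the almost-complete DAG $\cg_{1,e}:=\cg_1'\setminus e$; then Lemma~\ref{lemma: almost complete} yields that $\cg_{2,e}:=\cg_{1,e}$ with $a\to b$ flipped is a DAG with $\cm_{\cg_{1,e}}=\cm_{\cg_{2,e}}$.

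Finally, let $\cg_2'$ be $\cg_1'$ with $a\to b$ flipped (the complete DAG ordered $P,b,a,C$). It is a completion of $\cg_2$, its missing-edge set is again $\tilde E$, and $\cg_2'\setminus e=\cg_{2,e}$ for each $e\in\tilde E$. Applying Lemma~\ref{lemma:model-via-almost-complete-intersection} to $\cg_1$ with completion $\cg_1'$ and to $\cg_2$ with completion $\cg_2'$ then gives
\[
  \cm_{\cg_1}=\bigcap_{e\in\tilde E}\cm_{\cg_{1,e}}=\bigcap_{e\in\tilde E}\cm_{\cg_{2,e}}=\cm_{\cg_2}.
\]
The main obstacle I anticipate is the reduction step: one must carefully prove that $R$ detaches as a union of connected components (the trek bookkeeping) and supply the component-factorization property of Lyapunov models, which is natural but not isolated as a lemma earlier in the paper. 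Once $R=\emptyset$, everything hinges on choosing the completion so that super-coveredness of $a\to b$ survives the passage to the almost-complete graphs $\cg_{1,e}$, which is exactly why the order $P,a,b,C$ is used.
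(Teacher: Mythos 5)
The reduction to $R=\emptyset$ is where your argument breaks down. The marginal independence $r\indep\{a,b\}$ forbids treks from $r$ to $a$ or to $b$, and your case check correctly rules out edges between $R$ and $P$, between $R$ and $\{a,b\}$, and edges $c\to r$ with $c\in C$ (each of these does create a trek, e.g.\ $r\ot c\ot b$). But it does \emph{not} rule out edges $r\to c$ into a child $c\in\ch_{\cg_1}(b)$: the walk $r\to c\ot b$ has a collider at $c$ and is not a trek, so no trek from $r$ to $a$ or $b$ arises from it. Such edges genuinely occur --- the graph of type~(\subref{fig:NonIdent1}) in \cref{fig:NonIdent}, with edges $1\to4$, $2\to3$, $2\to4$, $3\to4$, has the super-covered edge $2\to3$ with $P=\emptyset$, $C=\{4\}$, $R=\{1\}$ and the cross edge $1\to4$. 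Hence $R$ does not detach as a union of connected components, the block-diagonal factorization is unavailable, and you cannot reduce to $V=P\sqcup\{a,b\}\sqcup C$. This is exactly the obstacle you flagged yourself; unfortunately the trek bookkeeping resolves it in the negative.

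The paper's proof keeps the set $M$ (your $R$) of marginal vertices in play: it chooses a topological order $P,a,b,M,C$ and completes $\cg_1$ with respect to it, so that the vertices of $M$ become children of $P\cup\{a,b\}$ in the completion $\cg_1'$, the edge $a\to b$ remains super-covered there, and the intersection over missing edges via \cref{lemma:model-via-almost-complete-intersection} and \cref{lemma: almost complete} is carried out for that completion. Your argument that $\cg_2$ is a DAG is fine, and your treatment of the case $R=\emptyset$ (completion ordered $P,a,b,C$, missing edges only inside $P$ or inside $C$, then \cref{lemma: almost complete} plus \cref{lemma:model-via-almost-complete-intersection}) is correct and coincides with the paper's argument --- but as written it only covers the special case $M=\emptyset$.
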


\begin{proof}
Since $a \to b$ is super-covered, the vertices partition as $V = P \cup \{a,b\} \cup C \cup M$ where $P = \pa_{\cg_1}(a)$, $C = \ch_{\cg_1}(b)$, and for each $m \in M$ we have $m \indep \{a,b\}$.
Observe that there is a topological ordering $\tau$ of $\cg_1$ in which $p <_\tau a <_\tau b <_\tau m <_\tau c$ for all $p \in P$, $m \in M$ and $c \in C$.
It follows that $\cg_2$ is a DAG with the same topological ordering except that $a$ and $b$ must be transposed. Extend $\cg_1$ to a complete DAG $\cg_1'$ by adding all missing edges of the form $i \to j$ where $i <_\tau j$. Add the same edges to $\cg_2$ to obtain a complete DAG $\cg_2'$. The edge $a \to b$ is still super-covered in $\cg_1'$ where the vertices in $M$ are no longer marginal to $\{a,b\}$ but are instead the children of $P \cup \{a,b\}$, and flipping it produces~$\cg_2'$.

Note that $\cg_1$ and $\cg_2$ have the same sets of missing edges with respect to $\cg_1'$ and $\cg_2'$, respectively. Denote this set by $\tilde E$. For any $e \in \tilde E$ the two graphs $\cg_1' \setminus e$ and $\cg_2' \setminus e$ are almost-complete and differ by a super-covered edge flip. Thus their models are the same by \cref{lemma: almost complete}, and $\cm_{\cg_1} = \bigcap_{e \in \tilde E} \cm_{\cg_1' \setminus e} = \bigcap_{e \in \tilde E} \cm_{\cg_2' \setminus e} = \cm_{\cg_2}$ follows by \cref{lemma:model-via-almost-complete-intersection}.
\end{proof}

\begin{example}
\begin{figure}
\begin{tikzpicture}[every node/.style={draw, circle, inner sep=2pt}, scale=1.2, every edge/.style={draw, >=Latex}]
\node (1) at (0,0) {1};
\node (2) at (-.5,1) {2};
\node (3) at (0,2) {3};
\node (4) at (1,0) {4};
\node (5) at (1,2) {5};
\node (6) at (2,1) {6};

\draw (1) edge[->] (2);
\draw (1) edge[->] (4);
\draw (1) edge[->] (5);
\draw (1) edge[->] (6);
\draw (2) edge[->] (3);
\draw (2) edge[->] (4);
\draw (2) edge[->] (5);
\draw (2) edge[->] (6);
\draw (3) edge[->] (4);
\draw (3) edge[->] (5);
\draw (3) edge[->] (6);
\draw (4) edge[red,thick,->] (5);
\draw (4) edge[->] (6);
\draw (5) edge[->] (6);

\node[draw=none] () at (0.5,-.55) {\Large$\cg_1$};

\node (7) at (4,0) {1};
\node (8) at (3.5,1) {2};
\node (9) at (4,2) {3};
\node (10) at (5,0) {4};
\node (11) at (5,2) {5};
\node (12) at (6,1) {6};

\draw (7) edge[->] (8);
\draw (7) edge[->] (10);
\draw (7) edge[->] (11);
\draw (7) edge[->] (12);
\draw (8) edge[->] (9);
\draw (8) edge[->] (10);
\draw (8) edge[->] (11);
\draw (8) edge[->] (12);
\draw (9) edge[->] (10);
\draw (9) edge[->] (11);
\draw (9) edge[->] (12);
\draw (11) edge[red,thick,->] (10);
\draw (10) edge[->] (12);
\draw (11) edge[->] (12);

\node[draw=none] () at (4.5,-.55) {\Large$\cg_2$};
\end{tikzpicture}
\caption{Two almost-complete DAGs that differ by a single super-covered edge and the missing edge is between two parents.}
\label{figure:MissingEdgeBetweenParents}
\end{figure}
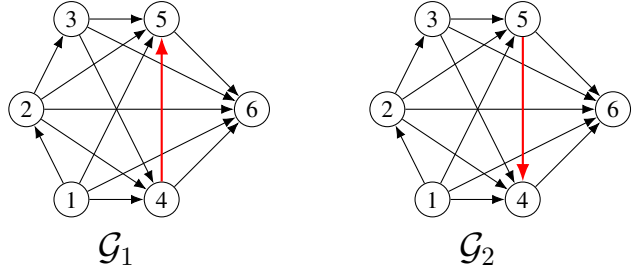

\begin{figure}
\begin{tikzpicture}[every node/.style={draw, circle, inner sep=2pt}, scale=1.1, every edge/.style={draw, >=Latex}]
\node (1) at (0,0) {1};
\node (2) at (1,-1) {2};
\node (3) at (1,1) {3};
\node (4) at (2.5,-1) {4};
\node (5) at (2.5,1) {5};

\draw (1) edge[->] (2);
\draw (1) edge[->] (3);
\draw (1) edge[->] (4);
\draw (1) edge[->] (5);
\draw (2) edge[red,thick,->] (3);
\draw (2) edge[->] (4);
\draw (2) edge[->] (5);
\draw (3) edge[->] (4);
\draw (3) edge[->] (5);

\node[draw=none] () at (1.6,-1.55) {\Large$\cg_1$};

\node (6) at (4,0) {1};
\node (7) at (5,-1) {2};
\node (8) at (5,1) {3};
\node (9) at (6.5,-1) {4};
\node (10) at (6.5,1) {5};

\draw (6) edge[->] (7);
\draw (6) edge[->] (8);
\draw (6) edge[->] (9);
\draw (6) edge[->] (10);
\draw (8) edge[red,thick,->] (7);
\draw (7) edge[->] (9);
\draw (7) edge[->] (10);
\draw (8) edge[->] (9);
\draw (8) edge[->] (10);

\node[draw=none] () at (5.6,-1.55) {\Large$\cg_2$};

\node (11) at (8,0) {1};
\node (12) at (9,-1) {2};
\node (13) at (9,1) {3};
\node (14) at (10.5,-1) {4};
\node (15) at (10.5,1) {5};

\draw (11) edge[->] (12);
\draw (11) edge[->] (13);
\draw (11) edge[->] (14);
\draw (11) edge[->] (15);
\draw (13) edge[red,thick,->,bend left=15] (12);
\draw (12) edge[red,thick,->,bend left=15] (13);
\draw (12) edge[->] (14);
\draw (12) edge[->] (15);
\draw (13) edge[->] (14);
\draw (13) edge[->] (15);

\node[draw=none] () at (9.6,-1.55) {\Large$\cg$};

\end{tikzpicture}
\caption{Two almost-complete DAGs that differ by a single super-covered edge and the missing edge is between two children, and $\cg=\cg_1\cup \cg_2$.}
\label{figure:MissingEdgeBetweenChildren}
\end{figure}
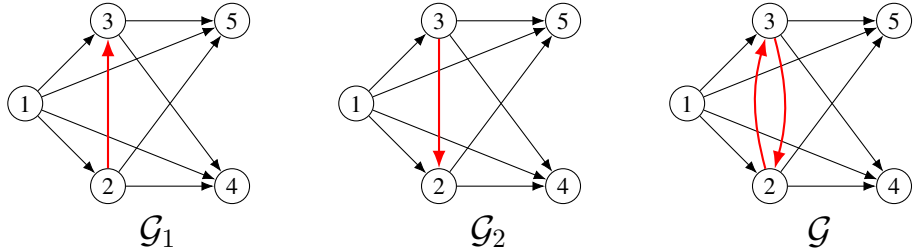

In this example, we illustrate the technique used in the proof of \cref{lemma: almost complete}. Consider the almost-complete DAGs $\cg_1$ and $\cg_2$ as shown in \cref{figure:MissingEdgeBetweenParents}, where $4\rightarrow 5$ is super-covered in $\cg_1$, which is flipped to obtain $\cg_2$. This setup corresponds to the first case in the proof of \cref{lemma: almost complete}, where the only missing edge is between two parents, $1$~and~$3$. Observe that the set $\{4,5,6\}$ must form a 3-clique in both $\cg_1$ and $\cg_2$, and the subgraphs $\cg_1[1,2,3]$ and $\cg_2[1,2,3]$ are identical. Thus, by \cref{lem:remove full sink clique} we can conclude that the models $\cm_{\cg_1}$ and $\cm_{\cg_2}$ are equal.

Now, consider the almost-complete DAGs $\cg_1$ and $\cg_2$ shown in \cref{figure:MissingEdgeBetweenChildren}, where the edge $2\rightarrow 3$ is super-covered in $\cg_1$ and is flipped to obtain $\cg_2$. This corresponds to the setup of the second case, where the only missing edge is between two children, $4$ and $5$. We first construct the non-simple graph $\cg = \cg_1\cup \cg_2$ which has the double edge between $2$ and $3$. The~idea here is to show that $\cg$ is not parameter identifiable and invoke \cref{lem:lyap_struct_id_from_param_id} to conclude that $\cg_1$ and $\cg_2$ are model equivalent. Hence we must show that the $15 \times 15$ matrix $A_{\cg}(\Sigma)$ does not have full rank. This is proved by constructing a non-zero element $D$ in its kernel as a linear combination of the columns of the matrix $H(\Sigma)$. In the notation of the proof, we have $P = \{1\}$,  $Q=\{1,2,3\}$, $T=\{(1,2),(1,3),(2,3)\}$. The crucial property of $D$ is that its entries indexed by non-edges of $\cg$ are zero. These entries are shown below together with the relevant rows of $H(\Sigma)$ and the non-zero entries in the coefficient vector $d$:
\begin{gather*}
\begin{pmatrix}
D_{2\to 1} \\
D_{3\to 1} \\
D_{4\to 1} \\
D_{5\to 1} \\
\vdots \\
D_{5\to 4}
\end{pmatrix} =
\begin{pmatrix}
\sigma_{11} & 0 & -\sigma_{13} \\
0 & \sigma_{11} & \sigma_{12} \\
0 & 0 & 0 \\
0 & 0 & 0 \\
\vdots & \vdots & \vdots \\
0 & 0 & 0
\end{pmatrix}
\begin{pmatrix}
-\sigma_{13}\\
\sigma_{12} \\
-\sigma_{11}
\end{pmatrix}.
\end{gather*}
All of these entries come out~as~zero, as required.
\end{example}

\subsection{Proofs of the Main Theorems}

We have now established that model equivalent DAGs have the same skeleton and model equivalent induced subgraphs and that flipping a super-covered edge in a DAG $\cg$ yields a new graph which is model equivalent to~$\cg$. To complete the proofs of \cref{thm:struct_id_induced_subgraph,theorem:super covered edge flip} we must now show that if $\cg_1$ and $\cg_2$ are DAGs such that $\cg_1[K]$ and $\cg_2[K]$ are model equivalent for all $K \subseteq V$ with $|K| = 4$, then $\cg_1$ and $\cg_2$ can be transformed into one another by a sequence of super-covered edge flips.

\begin{lemma}\label{lemma:super-covered-4-node-flips}
Let $\cg_1$ and $\cg_2$ be simple DAGs on 4 nodes. Then $\cg_1$ and $\cg_2$ are equivalent if and only if there exists a sequence of super-covered edge flips that transforms $\cg_1$ to $\cg_2$.
\end{lemma}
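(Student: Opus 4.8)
The forward implication is already in hand: by \cref{prop: edge flip}, flipping a super-covered edge of a DAG yields a DAG with the same model, so any sequence of such flips connects model equivalent graphs. The converse is a finite verification, since there are only finitely many DAGs on four vertices; the plan is to cut the enumeration down to a handful of cases using the structural results already established, decide model equivalence in each surviving case via the missing-edge relations, and then exhibit the required super-covered edge flips. Throughout I work up to simultaneous relabeling of the vertices, which is legitimate because model equivalence, acyclicity, and every clause of \cref{defn:super-covered-edge} are invariant under relabeling both graphs by the same permutation.

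First I would apply \cref{cor:only if direction}: model equivalent four-node DAGs have the same skeleton, so I may fix a skeleton and range over its acyclic orientations. If the skeleton is disconnected, the model of a DAG splits as a product of the models of its connected components --- immediate from the bijectivity of the parametrization $M \mapsto \Sigma$, since a block-diagonal sparsity pattern forces a block-diagonal $\Sigma$ --- so model equivalence reduces to that of the components, each on at most three vertices. If the skeleton has an isolated vertex, \cref{lem:induced_subgraph_linear} and \cref{prop:induced_subgraph_distinguish} remove it, and any shared full sink clique is removed by \cref{lemma: adding a sink} and \cref{lem:remove full sink clique}; both reductions again land at $\le 3$ vertices. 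On $\le 3$ vertices the only non-identifiable models are those of the $2$- and $3$-cliques (transitive tournaments), and the consecutive edges of a transitive tournament are super-covered, so adjacent transpositions of the topological order --- which generate the full symmetric group --- connect all orientations by super-covered flips. This leaves the six connected skeletons on four vertices: the path, the star, the paw, the $4$-cycle, the diamond, and $K_4$.

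For each of these six skeletons I would enumerate its acyclic orientations and decide, for every pair, whether their models coincide. Because two simple DAGs with a common skeleton have equal dimension $|E_1| = |E_2|$, \cref{prop:Geom}(1) reduces equality of the models to containment; by \cref{prop:MissingEdge} the containment $\cm_{\cg_1} \subseteq \cm_{\cg_2}$ holds precisely when each missing-edge polynomial $f^{ij}_{\cg_2'}$ of a completion $\cg_2'$ of $\cg_2$ vanishes on the image of the rational parametrization of $\cm_{\cg_1}$, and failure of containment is certified by a single stable $M \in \rr^{E_1}$ on which that polynomial is nonzero --- exactly the computation performed in \cref{eg:123}. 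This step simultaneously produces the classification of non-identifiable four-node models displayed in \cref{fig:NonIdent}. Finally, in each non-trivial equivalence class I would check directly from \cref{defn:super-covered-edge} that whenever two members $\cg_1 \neq \cg_2$ differ, at least one edge on which they disagree is super-covered in $\cg_1$ and its reversal stays inside the class: for the two-element classes of types (I)--(III) this is the unique differing edge, and for the clique classes (IV)--(VII) it is a consecutive edge of a transitive-tournament block, as above. Inducting on $|E(\cg_1) \setminus E(\cg_2)|$ then produces a connecting sequence of exactly $\delta$ super-covered flips, which also gives the sharp count in the ``moreover'' clause.

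The main obstacle is organizational rather than conceptual: correctly computing the missing-edge-relation determinants across all acyclic orientations of the six connected four-node skeletons --- work best delegated to computer algebra --- and then verifying the three clauses of \cref{defn:super-covered-edge} for the relevant edges in all of those orientations. The reductions in the second paragraph are what keep the enumeration small, so I would carry those out first and only run the brute-force symbolic check on the surviving connected skeletons.
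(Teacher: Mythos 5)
Your proposal is correct and takes essentially the same route as the paper, which disposes of this lemma in two sentences as a ``direct computation'': enumerate the four-node DAGs, identify the equivalence classes of \cref{fig:NonIdent} via the missing-edge relations, and check that each class is generated by super-covered flips. Your reductions (disconnected skeletons, isolated vertices, sink cliques) are a sensible way to organize that computation but do not change the underlying argument.
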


\begin{proof}
This result follows from direct computation. All 4-node graphs which are not pictured in \cref{fig:NonIdent} do not contain a super-covered edge and thus are structurally identifiable.
On~the other hand, in each of the seven fundamental types of non-identifiable Lyapunov DAG models pictured in \cref{fig:NonIdent}, the model equivalence class can be generated by starting with any member and performing all possible sequences of super-covered edge flips.
\end{proof}

\begin{lemma}\label{lemma:small-to-big-super-covered}
Let $\cg = (V, E)$ be a simple DAG with $|V| \ge 4$. Then $i\to j \in E$ is super-covered in $\cg$ if and only if $i \to j$ is super-covered in every induced subgraph of the form $\cg[i, j, k, l]$ for $k, l \in V \setminus \{i, j\}$.
\end{lemma}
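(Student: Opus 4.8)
The plan is to verify that each of the three clauses of \cref{defn:super-covered-edge} can be read off from the four-element induced subgraphs. For the forward implication, fix distinct $k,l\in V\setminus\{i,j\}$ and put $H=\cg[i,j,k,l]$. Since $H$ is induced, $\pa_H(x)=\pa_\cg(x)\cap\{i,j,k,l\}$ and $\ch_H(x)=\ch_\cg(x)\cap\{i,j,k,l\}$, so the identities $\ch_\cg(i)=\ch_\cg(j)\cup\{j\}$ and $\pa_\cg(i)\cup\{i\}=\pa_\cg(j)$ intersect down to the corresponding identities in $H$ termwise. The second clause is inherited because any edge $k'\to l'$ of $\cg$ with both endpoints in $\{i,j,k,l\}$ survives in $H$. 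For the third clause, if $m\in\{k,l\}$ is a common neighbor of $i$ and $j$ in $\cg$ then the two incident edges lie in $H$; if instead there is no trek from $m$ to $i$ or to $j$ in $\cg$, then a fortiori there is none in $H$, since every trek of an induced subgraph is a trek of $\cg$. Hence $i\to j$ is super-covered in $H$.

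\emph{Converse: the first two clauses.} Assume now $i\to j$ is super-covered in $\cg[i,j,k,l]$ for all distinct $k,l\in V\setminus\{i,j\}$. I would first lift the first two clauses back to $\cg$: a vertex witnessing a failure of, say, $\pa_\cg(i)\cup\{i\}=\pa_\cg(j)$ is necessarily distinct from $i$ and $j$ (by simplicity and acyclicity), and --- since $|V|\ge 4$ --- adjoining any fourth vertex yields a four-element set on which the first clause fails, contradicting the hypothesis. The same device handles $\ch_\cg(i)=\ch_\cg(j)\cup\{j\}$ and the second clause: a violation of the latter is witnessed by an edge $k'\to l'\notin E(\cg)$ with $k'\in\pa_\cg(j)\setminus\{i\}$ and $l'\in\ch_\cg(i)\setminus\{j\}$ (the cases $k'=i$ or $l'=j$ give edges that are automatically present), which again involves only the four vertices $i,j,k',l'$. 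From here on I use $\pa_\cg(i)\cup\{i\}=\pa_\cg(j)$ and $\ch_\cg(i)=\ch_\cg(j)\cup\{j\}$ in $\cg$ itself, and write $N=\neighbors_\cg(i)\cap\neighbors_\cg(j)$, which a short computation identifies with $\pa_\cg(i)\cup\ch_\cg(j)$.

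\emph{Converse: the structural claim.} The heart of the argument is to show that the hypothesis forces $\an_\cg(i)=\pa_\cg(i)\cup\{i\}$ and, dually, $\de_\cg(j)=\ch_\cg(j)\cup\{j\}$. For the first, suppose there were $a\in\an_\cg(i)$ with $a\notin\pa_\cg(i)\cup\{i\}$, and take a shortest directed path $a=u_0\to u_1\to\cdots\to u_r=i$; then $r\ge 2$, $p:=u_{r-1}\in\pa_\cg(i)$, and $a':=u_{r-2}$ satisfies $a'\ne i$ (the path is simple) and $a'\notin\pa_\cg(i)$ (otherwise $u_0\to\cdots\to u_{r-2}\to i$ would be shorter when $r\ge 3$, and would force $a=a'\in\pa_\cg(i)$ when $r=2$). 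Because $a'\in\an_\cg(i)$ it is not a descendant of $j$, so $a'$ can be adjacent to $j$ only through $a'\to j$; but $a'\to j\in E(\cg)$ would put $a'\in\pa_\cg(j)=\pa_\cg(i)\cup\{i\}$, a contradiction. Hence $a'$ is adjacent to neither $i$ nor $j$, so it is not a common neighbor of $i$ and $j$ in $\cg[i,j,p,a']$; super-coveredness there then forces $a'$ to have no trek to $i$ or to $j$ inside that subgraph, contradicting the trek $a'\to p\to i$ which lives in it. The dual claim follows symmetrically. Combining these with $\pa_\cg(j)=\pa_\cg(i)\cup\{i\}$ (and its child analogue) yields $\an_\cg(j)=\{i,j\}\cup\pa_\cg(i)$ and $\de_\cg(i)=\{i,j\}\cup\ch_\cg(j)$; in particular $\an_\cg(i),\an_\cg(j),\de_\cg(i),\de_\cg(j)$ all lie in $N\cup\{i,j\}$.

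\emph{Converse: the third clause, and the main obstacle.} It remains to show that every $v\in V\setminus\{i,j\}$ with $v\notin N$ has no trek to $i$ or to $j$. Since $\an_\cg(i)\subseteq\an_\cg(j)$, such a trek exists iff $\an_\cg(v)\cap\an_\cg(j)\ne\emptyset$; and because $v\notin N\cup\{i,j\}$ keeps $v$ out of $\de_\cg(i)\cup\de_\cg(j)$, this happens iff $v$ has a strict ancestor in $\pa_\cg(i)$. Suppose some such $v$ did; among all directed paths $p=z_0\to z_1\to\cdots\to z_r=v$ with $p\in\pa_\cg(i)$ and $v\notin N\cup\{i,j\}$, choose one of minimal length $r$. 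If $z_1\notin N\cup\{i,j\}$, then $\cg[i,j,p,z_1]$ contains the trek $z_1\ot p\to i$ while $z_1$ is not a common neighbor of $i$ and $j$ there, contradicting super-coveredness of $i\to j$ in that subgraph. Otherwise $z_1\in\pa_\cg(i)$ --- the remaining options $z_1\in\{i,j\}\cup\ch_\cg(j)$ are excluded because they would place $v$ in $\de_\cg(i)\cup\de_\cg(j)$ --- so $z_1\to\cdots\to z_r=v$ is a strictly shorter such path, contradicting minimality. Hence no such $v$ exists, which is the third clause. I expect the main obstacle to be precisely the structural claim $\an_\cg(i)=\pa_\cg(i)\cup\{i\}$ (and its dual): the remaining steps only require turning a global violation into a four-vertex witness in a largely mechanical way, whereas this one is what upgrades the purely local hypotheses into a genuine constraint on the global ancestral and descendant closures of $i$ and $j$.
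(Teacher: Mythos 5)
Your proof is correct and rests on the same mechanism as the paper's: given a directed path or trek witnessing a violation, pass to the four-node induced subgraph on $i$, $j$ and two consecutive path vertices, and use the third clause of super-coveredness there to either shorten the path or reach a contradiction. The paper runs this as a single terse trek-shortening induction, whereas you package it as two explicit structural claims ($\an_\cg(i)=\pa_\cg(i)\cup\{i\}$ and $\de_\cg(j)=\ch_\cg(j)\cup\{j\}$) followed by a minimal-path argument; this is a reorganization rather than a different route, and it fills in details the paper leaves implicit.
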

\begin{proof}

It is immediate from the definition of a super-covered edge that if edge $i \to j$ is super-covered in $\cg$ then it is super-covered in every induced subgraph containing $i$ and $j$ so it only remains to show the other direction.

The conditions $\ch_\cg(i) = \ch_\cg(j) \cup \{j\}$ and $\pa_\cg(i) \cup \{i\} = \pa_\cg(j)$ are easy to verify by inspection of appropriate induced subgraphs (even on three nodes at a time). For each $k \in \pa_\cg(j)$ and $l \in \ch_\cg(i)$ we have $k \to l \in E(\cg[i,j,k,l])$ by assumption and hence $k \to l \in E$.
It remains to show that any $k \in V \setminus\{i, j\}$ which is not marginally independent of $\{i,j\}$ must be adjacent to one (and therefore both) of them. Suppose that $k \not \indep \{i,j\}$ in $\cg$ meaning that there exists a trek from $k$ to $i$ or $j$. Since $i$ and $j$ have the same parents, we may assume without loss of generality that there is a trek from $k$ to $i$ of the form $k = k_p \ot k_{p-1} \ot \cdots \ot k_0 = l_0 \to \cdots \to l_{q-2} \to l_{q-1} \to l_q = i$. Then it holds that $l_{q-2} \to i \in E$ since $i \to j$ is super-covered in $E(\cg[i, j, l_{q-1}, l_{q-2}])$. Hence, we may shorten the trek by leaving out $l_{q-1}$. Iterating this argument shows that $k$ must be a parent or a child of $i$ which completes the proof.
\end{proof}

\begin{theorem}\label{thm:4-to-infinity}
Let $\cg_1$ and $\cg_2$ be two DAGs on vertex set~$V$, with $|V| \ge 4$, such that $\cm_{\cg_1[K]} = \cm_{\cg_2[K]}$ for any subset $K\subseteq V$ of size four. Then $\cg_1$ and $\cg_2$ have the same skeleton and there exists a sequence of $\delta$ super-covered edge flips which transforms $\cg_1$ into $\cg_2$, where $\delta = |E(\cg_1) \setminus E(\cg_2)|$. In particular $\cm_{\cg_1} = \cm_{\cg_2}$.
\end{theorem}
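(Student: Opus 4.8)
The plan is to prove this by induction on $\delta = |E(\cg_1) \setminus E(\cg_2)|$, using the local-to-global passage for super-covered edges (\cref{lemma:small-to-big-super-covered}) together with the 4-node base case (\cref{lemma:super-covered-4-node-flips}). First I would establish that $\cg_1$ and $\cg_2$ have the same skeleton: by \cref{lemma:skel_struct_id}, if they differed in skeleton there would be a pair $\{i,j\}$ adjacent in one but not the other, and then choosing any two further vertices $k,l$ gives induced subgraphs $\cg_1[i,j,k,l]$ and $\cg_2[i,j,k,l]$ with different skeletons, hence (again by \cref{lemma:skel_struct_id}) inequivalent, contradicting the hypothesis. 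With equal skeletons, $\delta$ is well-defined and $\delta = 0$ forces $\cg_1 = \cg_2$, settling the base case. For the final conclusion $\cm_{\cg_1} = \cm_{\cg_2}$ once the flip sequence is produced, one simply applies \cref{prop: edge flip} along the sequence.

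For the inductive step, assume $\delta \geq 1$, so there is an edge $a \to b \in E(\cg_1)$ with $b \to a \in E(\cg_2)$. The goal is to find \emph{some} edge of $\cg_1$ that is oriented oppositely in $\cg_2$ and is super-covered in $\cg_1$; flipping it yields a DAG $\cg_1'$ (by \cref{prop: edge flip}) with $\cm_{\cg_1'} = \cm_{\cg_1}$, the same skeleton, and $|E(\cg_1') \setminus E(\cg_2)| = \delta - 1$, and $\cg_1'$ still satisfies the 4-node hypothesis against $\cg_2$ since $\cm_{\cg_1'[K]} = \cm_{\cg_1[K]}$ for every $K$ (the flip of a super-covered edge is itself a super-covered flip inside each $\cg_1[i,j,k,l]$ containing the flipped edge, by \cref{lemma:small-to-big-super-covered}, and leaves all other induced 4-subgraphs untouched). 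Then induction finishes. By \cref{lemma:small-to-big-super-covered}, to verify that a candidate edge $c \to d$ of $\cg_1$ is super-covered it suffices to check super-coveredness in each induced subgraph $\cg_1[c,d,k,l]$; and by hypothesis $\cg_1[c,d,k,l]$ is model equivalent to $\cg_2[c,d,k,l]$, which by \cref{lemma:super-covered-4-node-flips} means they are joined by super-covered 4-node flips. The differently-oriented edges between the two 4-node graphs are therefore all super-covered in the appropriate intermediate graphs; in particular, inspecting \cref{fig:NonIdent}, whenever $\cg_1[i,j,k,l] \neq \cg_2[i,j,k,l]$ there is at least one edge of $\cg_1[i,j,k,l]$ differing in orientation that is super-covered in $\cg_1[i,j,k,l]$ itself.

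The main obstacle is the combinatorial bookkeeping needed to guarantee that a \emph{single} edge of $\cg_1$ is simultaneously super-covered across \emph{all} relevant 4-node subgraphs and is flipped in $\cg_2$. Locally, each $\cg_1[i,j,k,l] \neq \cg_2[i,j,k,l]$ offers a super-covered edge to flip, but these local choices must be shown to be consistent — i.e., one needs a global differently-oriented edge $c\to d$ such that, for \emph{every} $k, l$, the edge $c \to d$ is super-covered in $\cg_1[c,d,k,l]$. The likely route is: take any differently-oriented edge and, among all such, pick one (e.g.\ a ``sink-most'' one with respect to the topological order of $\cg_1$, or one minimizing/maximizing some graph parameter) so that the local obstructions in \cref{fig:NonIdent} cannot force a different edge to be the super-covered one in some $\cg_1[c,d,k,l]$; then \cref{lemma:small-to-big-super-covered} upgrades this to global super-coveredness. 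Verifying that such a choice always exists — essentially a careful case analysis against the seven non-identifiable 4-node types — is the crux, and is the step I would expect to occupy most of the argument.
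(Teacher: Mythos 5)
Your outline matches the paper's strategy almost exactly: establish equal skeletons via \cref{lemma:skel_struct_id} on induced subgraphs, then induct on $\delta$ by finding a differently-oriented edge of $\cg_1$ that is super-covered, flipping it via \cref{prop: edge flip}, and checking that the $4$-node hypothesis is preserved (your justification of that last point via \cref{lemma:small-to-big-super-covered} is actually spelled out more carefully than in the paper). However, there is a genuine gap: the step you yourself flag as the crux --- exhibiting a \emph{single} differently-oriented edge $c \to d$ that is super-covered in $\cg_1[c,d,k,l]$ for \emph{every} choice of $k,l$ --- is never actually carried out. You gesture at ``pick a sink-most edge or one optimizing some graph parameter'' and ``a careful case analysis against the seven types,'' but without a concrete choice and the accompanying argument, the proof does not close. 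Note also that the local information alone is genuinely insufficient: each discrepant $4$-node subgraph offers \emph{some} super-covered edge, but nothing in your argument prevents these local witnesses from being different edges in different subgraphs, which is exactly the consistency problem you identify but do not resolve.

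The paper resolves it with Chickering's \textbf{Find-Edge} ordering: define $(i \to j) \prec (k \to l)$ iff $j < l$, or $j = l$ and $i > k$, in a topological order of $\cg_1$, and take the $\prec$-minimal edge $i \to j$ of $\Delta(\cg_1,\cg_2)$. If it fails to be super-covered, \cref{lemma:small-to-big-super-covered} produces a $4$-node induced subgraph in which $i \to j$ is present and not super-covered while some other edge is; inspection of \Cref{fig:NonIdent} narrows this to types (\subref{fig:NonIdent4}) and (\subref{fig:NonIdent5}). The decisive point --- absent from your sketch --- is that $\prec$-minimality forces all $\prec$-smaller edges of these configurations to be oriented identically in $\cg_2$, and in every case this, together with the reversal of $i \to j$ in $\cg_2$, creates a directed cycle in $\cg_2$, contradicting acyclicity. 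This acyclicity contradiction is the mechanism that makes the global choice consistent; without it (or an equivalent substitute) your induction has no guaranteed edge to flip.
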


\begin{proof}
As $\cg_1[K] \sim \cg_2[K]$ for any subset $K \subseteq V$ of size four, by \cref{lemma:skel_struct_id} it follows that $\cg_1[K]$ and $\cg_2[K]$ have the same skeleton and thus $\cg_1$ and $\cg_2$ have the same skeleton as~well.
We employ the \textbf{Find-Edge} algorithm from \cite{chickering1995transformational} and show that under our stronger assumptions on the graphs $\cg_1$ and $\cg_2$ the edge it outputs is not only covered but super-covered. Order the edges of $\cg_1$ by the rule: $(i \to j) \prec (k \to l)$ if and only if $j < l$, or $j = l$ and $i > k$ in the topological ordering on $\cg_1$. Let $\Delta(\cg_1, \cg_2) = E(\cg_1) \setminus E(\cg_2)$ denote the set of edges which need to be flipped.

We claim that the $\prec$-minimal edge $i \to j$ in $\Delta(\cg_1, \cg_2)$ is super-covered in $\cg_1$. Suppose that it is not. Then \cref{lemma:small-to-big-super-covered} yields $k, l \in V$ such that $i \to j$ is not super-covered in $\cg_1[i,j,k,l]$. On the other hand, $\cg_1[i,j,k,l] \sim \cg_2[i,j,k,l]$ and since these graphs are model equivalent but not equal, there must exist a super-covered edge in $\cg_1[i,j,k,l]$ by \cref{lemma:super-covered-4-node-flips}. Flipping it produces a third distinct member of the model equivalence class of this~DAG. Thus, $\cg_1[i,j,k,l]$ must be a 4-node DAG in which $i \to j$ is not super-covered but some other edge is. Of all the non-identifiable 4-node graphs in \cref{fig:NonIdent} these criteria narrow the possibilities down to the types (\subref{fig:NonIdent4}) and (\subref{fig:NonIdent5}):
\begin{center}
\begin{tikzpicture}[scale=1.5, every node/.style={draw, circle, inner sep=1pt}, every edge/.style={draw, >=Latex}]
  \node (11) at (0,0) {$1$}; \node (21) at (1,0) {$2$};
  \node (31) at (0,-1) {$3$}; \node (41) at (1,-1) {$4$};
  \draw (11) edge[->,red,thick] (21); \draw (11) edge[->] (31); \draw (11) edge[->] (41);
  \draw (21) edge[->,red,thick] (31); \draw (21) edge[->] (41); \draw (31) edge[->,red,thick] (41);
  \node[draw=none,rectangle] (t1) at (0.5,-1.5) {\large(\subref{fig:NonIdent4})};

  \node (12) at (3,0) {$1$}; \node (22) at (4,0) {$2$};
  \node (32) at (3,-1) {$3$}; \node (42) at (4,-1) {$4$};
  \draw (12) edge[->,red,thick] (22); \draw (12) edge[->] (32); \draw (22) edge[->,red,thick] (32);
  \node[draw=none,rectangle] (t2) at (3.5,-1.5) {\large(\subref{fig:NonIdent5})};
\end{tikzpicture}%
\end{center}
The super-covered edges in both of these graphs are highlighted in red. Thus we consider all assignments of $i,j,k,l$ to $1,2,3,4$ such that $i \to j$ exists but is not super-covered. In case (\subref{fig:NonIdent5}) there is only one possibility which puts $i \to j$ as $1 \to 3$. The edges $1\to 2$ and $2\to 3$ are both $\prec$-smaller than $1\to 3$, so by the choice of $i \to j$ as the minimal element of $\Delta(\cg_1, \cg_2)$ we know that these two edges have the same orientation in $\cg_2$. But then $3 \to 1$ in $\cg_2$ creates a cycle which is absurd.
For type (\subref{fig:NonIdent4}) we have three possibilities which all lead to contradictions in the same way:
\begin{itemize}
\item If $i \to j$ is $1 \to 3$, we have the same situation as for type (\subref{fig:NonIdent5}).
\item If $i \to j$ is $1 \to 4$, then $1\to 2$ and $2 \to 4$ are $\prec$-smaller and thus must exist in $\cg_2$. This shows that $1 \to 2 \to 4 \to 1$ is a cycle in $\cg_2$.
\item If $i \to j$ is $2 \to 4$, then $2 \to 3$ and $3 \to 4$ exist in $\cg_2$ because they are $\prec$-smaller than $2 \to 4$. Again, this gives a cycle $2 \to 3 \to 4 \to 2$ in $\cg_2$.
\end{itemize}
Ultimately, the assumption that $i \to j$ is not super-covered leads to a contradiction. So $i \to j$ is super-covered and we may flip it, producing a DAG $\cg_1'$ which is model equivalent to $\cg_1$. It satisfies the assumptions of our claim and has $|\Delta(\cg_1', \cg_2)| < |\Delta(\cg_1, \cg_2)|$. Thus, the claim follows by induction.
\end{proof}

\begin{proof}[Proof of \cref{thm:struct_id_induced_subgraph}]
If $\cg_1[K] \sim \cg_2[K]$ for all $K \subseteq V$ such that $|K| = 4$, then \cref{thm:4-to-infinity} yields $\cg_1 \sim \cg_2$. On the other hand, if there exists any set $K \subseteq V$ such that $\cg_1[K] \not \sim \cg_2[K]$ then $\cg_1 \not \sim \cg_2$ by \cref{prop:induced_subgraph_distinguish}.
\end{proof}

\begin{proof}[Proof of \cref{theorem:super covered edge flip}]
If there exists a sequence of super-covered edge flips which transforms $\cg_1$ into $\cg_2$ then \cref{prop: edge flip} implies $\cg_1 \sim \cg_2$. Now suppose that $\cg_1 \sim \cg_2$. By \cref{thm:struct_id_induced_subgraph} we have that $\cg_1[K] \sim \cg_2[K]$ for all $K \subseteq V$ with $|K|= 4$. The desired result then follows immediately from \cref{thm:4-to-infinity}.
\end{proof}

\begin{proof}[Proof of \cref{thm:struct-id-forbidden-minors}]
A DAG~$\cg$ is structurally identifiable within the class of DAGs if and only if it does not contain a super-covered edge. \Cref{lemma:small-to-big-super-covered} establishes that this property holds for $\cg$ if and only if it holds for all its induced subgraphs~$\cg[K]$ with $|K|=4$. \Cref{fig:NonIdent} lists all isomorphism types of DAGs on four nodes containing a super-covered~edge.
\end{proof}

\section{Discussion}\label{sec:outlook}

In this section we summarize our results in a broader context, discuss their limitations and highlight directions for future work.

\subsection{Lyapunov as a causal refinement of Bayes}

A Lyapunov DAG model $\cm_\cg$ is defined by a stochastic differential equation analogous to the linear structural equation of a Gaussian Bayesian network. We~gave a complete solution to the model equivalence problem for Lyapunov DAG models. This solution has two components:
\begin{enumerate}
\item A graph-theoretic characterization via induced subgraphs (\cref{thm:struct_id_induced_subgraph}) which parallels the v-structure criterion of Verma and Pearl for Bayesian networks~\cite{VermaPearl};
\item a transformational characterization via super-covered edge flips (\cref{theorem:super covered edge flip}) in analogy to the covered edge flips of Chickering~\cite{chickering1995transformational}.
\end{enumerate}

\begin{table}
\begin{tabular}{c|c||c|c||c|c}
\multicolumn{2}{c}{} & \multicolumn{2}{c}{Lyapunov DAG models} & \multicolumn{2}{c}{Bayesian network models} \\
$n$ & DAGs          & Distinct models & Identifiable  & Distinct models & Identifiable \\ \hline
$3$ & $25$          & $17$            & $13$          & $11$            & $4$ \\ \hline
$4$ & $543$         & $461$           & $423$         & $185$           & $59$ \\ \hline
$5$ & $29\,281$     & $27\,697$       & $26\,761$     & $8\,782$        & $2\,616$ \\ \hline
$6$ & $3\,781\,503$ & $3\,715\,745$   & $3\,665\,673$  & $1\,067\,825$   & $306\,117$
\end{tabular}
\caption{The numbers of model equivalence classes and structurally identifiable models on graphs with few vertices.}
\label{tab:Ident}
\end{table}

As an unexpected result of this parallel development of model equivalence criteria we are able to directly compare Lyapunov DAG and Bayesian network models: any two DAGs which have the same Lyapunov model also have the same Bayesian network model (\cref{cor:lyap-refines-bayes}). In other words, Lyapunov models provide a \emph{causal refinement} of Bayesian networks. They have smaller equivalence classes and are more often structurally identifiable. A quantitative comparison between the two model classes is given in \Cref{tab:Ident}.
\begin{problem}
Obtain general quantitative bounds on the share of structurally identifiable graphs and the average size of a Lyapunov model equivalence class.
\end{problem}
Asymptotic results of this nature for Bayesian networks have been obtained in \cite{AsymptoticMEC}. Our~graph-theoretic characterizations of model equivalence should prove useful in this investigation for Lyapunov models.

First results in causal discovery in the setting of diffusion processes were achieved in \cite{lorch2024causal}. Our~point of view creates additional opportunities to import well-established and successful ideas from Bayesian networks. We highlight the following task:

\begin{problem}
Adapt the Greedy Equivalence Search (GES~\cite{GES}) algorithm to Lyapunov DAG models.
\end{problem}

\subsection{Model-defining equations and conditional independence}

Our theorems build on a description of Lyapunov models via missing-edge relations which are derived from the Lyapunov equation by Cramer's rule. As such they are polynomials of high degree and difficult to use in computational practice. Moreover, we cannot offer a statistical interpretation of these~relations.

\begin{problem}
Find more efficient (lower degree) relations with a clear statistical interpretation which define Lyapunov models in the sense of \cref{prop:MissingEdge}.
\end{problem}

By \cref{rem:MissingEdge} missing-edge relations in Bayesian networks do have a straightforward statistical interpretation: they correspond to conditional independence constraints. This does not carry over to the Lyapunov realm (cf.~\cite{drton2024conditional}) which is what accounts for some of the technical difficulties in our proofs compared to the Bayesian network literature. The following observation gives an idea of how far away Lyapunov models are from being specified by their conditional independence structure:

\begin{proposition} \label{prop:ci-eq-model}
Let $\cg$ be a DAG. The model $\cm_\cg$ is defined by its conditional independence structure if and only if for each missing edge $i \to j \notin E(\cg)$ there is no trek between between $i$ and $j$ in~$\cg$.
\end{proposition}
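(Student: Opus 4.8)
The plan is to characterize precisely when the conditional independence (CI) structure of $\cm_\cg$ is strong enough to recover the model itself. The CI structure of a Lyapunov DAG model, by \cite{drton2024conditional}, is generated by the marginal independencies $X_i \indep X_j$ which hold exactly when there is no trek between $i$ and $j$ in $\cg$; these marginal independencies are captured by the \emph{trek graph} $\hat\cg$, a bidirected graph with an edge $i \edge j$ whenever there \emph{is} a trek between $i$ and $j$. So ``$\cm_\cg$ is defined by its CI structure'' means that $\cm_\cg$ equals the set of positive definite $\Sigma$ satisfying $\sigma_{ij} = 0$ for all non-trek pairs $\{i,j\}$; call this larger algebraic set $\cm_{\hat\cg}$. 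We always have $\cm_\cg \subseteq \cm_{\hat\cg}$, so the question is whether this inclusion is an equality. Since $\cm_\cg$ is irreducible, by \cref{prop:Geom} it suffices to compare dimensions: equality holds if and only if $\dim \cm_\cg = \dim \cm_{\hat\cg}$. Because $\cg$ is a simple DAG, \cref{thm:LyapIdentifiability} gives $\dim \cm_\cg = |E(\cg)| = n + \binom{n}{2} - (\text{number of missing edges } i \to j \notin E)$, while $\cm_{\hat\cg}$ is a coordinate subspace of $\PD_n$ cut out by the vanishing of $\sigma_{ij}$ for each non-trek pair, so $\dim \cm_{\hat\cg} = \binom{n+1}{2} - \#\{\text{non-trek pairs}\}$.

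First I would set up this dimension count carefully. Let $p$ be the number of ordered missing edges (pairs $i \ne j$ with $i \to j \notin E$ and, for simple graphs, also $j \to i \notin E$ when the skeleton edge is absent — one must be careful here since missing edges are counted per skeleton non-edge). Actually the cleanest bookkeeping: in a simple DAG every unordered pair $\{i,j\}$ is either a skeleton edge (contributing $1$ to $|E|$ beyond the $n$ self-loops) or not; so $|E(\cg)| = n + (\text{number of skeleton edges})$, and $\dim \cm_\cg = n + e$ where $e = $ number of skeleton edges. On the other side, $\dim \cm_{\hat\cg} = n + t$ where $t = $ number of trek pairs (unordered pairs $\{i,j\}$ joined by a trek). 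Every skeleton edge is a trek (a length-one directed path), so $e \le t$ always, recovering $\cm_\cg \subseteq \cm_{\hat\cg}$ on the dimension level. Equality of dimensions holds if and only if $e = t$, i.e.\ if and only if every trek pair is already a skeleton edge — equivalently, there is no non-adjacent pair $\{i,j\}$ joined by a trek. That is exactly the condition ``for each missing edge $i \to j \notin E(\cg)$ there is no trek between $i$ and $j$'' (phrased for the skeleton non-edges).

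The two directions then follow. If the trek condition holds, then $\dim \cm_\cg = \dim \cm_{\hat\cg}$ and both sets are irreducible (the coordinate subspace $\cm_{\hat\cg}$ intersected with the PD cone is irreducible as it is a relatively open subset of a linear space), so $\cm_\cg = \cm_{\hat\cg}$ by \cref{prop:Geom}(1); hence $\cm_\cg$ is defined by its CI structure. Conversely, if there is a non-adjacent trek pair $\{i,j\}$, then $t > e$, so $\dim \cm_{\hat\cg} > \dim \cm_\cg$, and the inclusion $\cm_\cg \subsetneq \cm_{\hat\cg}$ is strict; thus the CI constraints fail to pin down the model. One subtlety to nail down: I should confirm that $\cm_{\hat\cg}$, as defined by the marginal independence constraints alone, really is the set cut out by \emph{all} CI constraints of the model — this is precisely the content of \cite{drton2024conditional}, that all CI statements of a GCLM follow from the marginal ones read off the trek graph, so no further constraints shrink $\cm_{\hat\cg}$.

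The main obstacle I anticipate is the careful identification of ``the model defined by the CI structure'' with the coordinate subspace $\cm_{\hat\cg}$: one must invoke \cite{drton2024conditional} to know that conditional (non-marginal) independencies add nothing, and one must check that $\cm_{\hat\cg} \cap \PD_n$ is irreducible of the claimed dimension rather than having some subtle lower-dimensional behavior forced by positive definiteness (it does not — a generic symmetric matrix with prescribed zero off-diagonal entries can be perturbed to be positive definite while keeping those zeros, e.g.\ by adding a large multiple of the identity). The dimension bookkeeping itself — relating $|E(\cg)|$, skeleton edges, and trek pairs — is routine once the conventions about self-loops are respected.
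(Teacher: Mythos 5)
Your proposal is correct and follows exactly the route the paper intends: it only sketches the argument ("follows from the main theorem in \cite{drton2024conditional} and \cref{prop:Geom} using dimension considerations"), and you have fleshed out precisely that sketch — identifying the CI-defined set with the coordinate subspace given by the trek graph, counting $\dim\cm_\cg = n + e$ against $n + t$, and invoking irreducibility. Your attention to the two subtleties (reading the missing-edge condition on skeleton non-edges, and checking that conditional independencies add no constraints beyond the marginal ones) is exactly what is needed to make the paper's sketch rigorous.
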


This follows without much difficulty from the main theorem in \cite{drton2024conditional} and \cref{prop:Geom} using dimension considerations. Note that a DAG $\cg$ satisfying the condition in \cref{prop:ci-eq-model} must have either $i \to j \in E$ or $j \to i \in E$ for every pair of vertices $i, j$ which are connected by a trek. In particular it must be a transitive DAG. This condition fails already for the 3-path.

We mention in this context another observation first made in \cite[Section~4.5]{boege2024realbirational}. Consider the 3-path $1 \to 2 \to 3$ from \cref{eg:123} whose Lyapunov model satisfies no conditional independence. However, when the model is restricted to have normed variances, i.e., $\sigma_{11} = \sigma_{22} = \sigma_{33} = 1$, then the missing-edge relation \eqref{eq:123} simplifies to
\[
  (\sigma_{13} - \sigma_{12} \sigma_{23}) \cdot (1 - \sigma_{12}\sigma_{13}\sigma_{23}) = 0,
\]
which, for correlation matrices, is equivalent to the conditional independence $[1 \indep 3 \mid 2]$. This~is precisely the conditional independence defining the Bayesian network of the 3-path. This curious fact is not true for all graphs as a computation with the tree $2 \ot 1 \to 3$ shows.

\begin{problem}
Characterize the conditional independence structures of Lyapunov correlation models. What is their relation to d-separation?
\end{problem}

\subsection{Cyclic models and correlated noise}

Our structural identifiability results hold within the class of Lyapunov DAG models with uncorrelated noise. A natural question is how far beyond these assumptions our techniques can be pushed --- in particular because one of the main promises of GCLMs is the ability to model causal feedback loops (i.e., cycles) without the loss of favorable properties such as parameter identifiability.

\begin{problem}
Extend the model equivalence characterizations to simple cyclic graphs and to the correlated noise setting.
\end{problem}

First note that all results in \cref{sec:Tools,sec:Subgraphs} also apply to cyclic graphs, as long as they are simple. Together they prove that model equivalent graphs have the same skeleton and model equivalent induced subgraphs. The acyclicity assumption enters in the proof of the converse in \cref{sec:EdgeFlips} via super-covered edge flips; it is essential in the proof of \cref{thm:4-to-infinity}. For dimension reasons, the 3-cycle and any complete DAG on three vertices have the same model, namely the whole space~$\PD_3$. Yet, every super-covered edge flip of a complete DAG produces another complete DAG. This shows that the concept of a super-covered edge is not flexible enough to connect all model equivalent graphs when cycles are allowed.
For Bayesian networks, the graphical model equivalence characterization has been extended to include cyclic graphs but this required significant work and new concepts; cf.~\cite{RichardsonCyclic}.

The assumption of uncorrelated noise, i.e., that the diffusion matrix $D$ in \eqref{eqn:ornstein-uhlenbeck} and hence also $C = D D^T$ is diagonal, can be loosened somewhat. This assumption is needed only to invoke (in several places) the result of \cite[Corollary 2.3]{varando2020graphical} (also \cite[Proposition~8.3]{dettling2022identifiability}) that the non-existence of a trek between $i$~and~$j$ implies the marginal independence~$i \indep j$. A recent generalization due to Hansen~\cite[Proposition~2.3]{hansen2025trek} establishes this implication for all matrices $C$ such that if there is no trek between $i$ and $j$ in $\cg$ then $c_{kl} = c_{lk} = 0$ for all ancestors $k$ of $i$ and all ancestors $l$ of $j$ in~$\cg$.
If the same $C$ satisfies this condition for two DAGs $\cg_1, \cg_2$ then our graphical criteria also characterize the model equivalence $\cm_{\cg_1,C} = \cm_{\cg_2,C}$. If the class of graphs under consideration is sufficiently broad --- such as all DAGs --- then the conjunction of all the conditions narrows the choices for $C$ down to diagonal matrices, which is the uncorrelated noise version presented here. In the class of polytrees, however, any two vertices are always connected by a trek and the conditions on $C$ are vacuous. Thus, at the expense of reducing the graph class from DAGs to polytrees, our techniques apply to general diffusion matrices.

\begin{funding}
T.B.\ was funded by the European Union's Horizon 2020 research and innovation programme under the Marie Skłodowska-Curie grant agreement No.~101110545. B.H. was supported by the Alexander von Humboldt Foundation. P.M. received funding from the European Research Council (ERC) under the European Union’s Horizon 2020 research and innovation programme (grant agreement No. 883818).
\end{funding}

\bibliographystyle{imsart-number} %
\bibliography{refs}       %

\end{document}